\documentclass[12pt,reqno]{amsart}
\usepackage{amsmath, amssymb, latexsym, amscd, amsthm,amsfonts,amstext}
\usepackage[mathscr]{eucal}
\usepackage{graphics}
\theoremstyle{plain}

\theoremstyle{remark}

\newtheorem{nx}{\bf Remark}
\theoremstyle{definition}

\renewcommand{\dim}{{\mathrm{dim}}}

\newtheorem{df}{Definition}[section]
\newtheorem{lem}[df]{Lemma}
\newtheorem{cor}[df]{Corollary}

\newtheorem{thr}[df]{Theorem}

\parindent = 0.5cm
\parskip = 6pt

\newcommand{\C}{{\mathbb{C}}}

\newcommand{\N}{\mathbb{N}}

\renewcommand{\P}{{\mathbb{P}}}

\newcommand{\R}{{\mathbb{R}}}

\newcommand{\supp}{\mathrm{Supp}\,}

\newcommand{\Z}{\mathbb{Z}}

\numberwithin{equation}{section}

\pagestyle{myheadings}
\theoremstyle{definition}
\theoremstyle{plain}

\begin{document}
\date{} 

\title[Normal families of meromorphic mappings ]{Normal families of meromorphic mappings 
of several complex variables for moving hypersurfaces in a complex projective space}

\author{Gerd Dethloff and Do Duc Thai and Pham Nguyen Thu Trang} 

\thanks{The research of the authors is partially supported by a NAFOSTED grant of Vietnam (Grant No. 101.01.38.09).}

\maketitle      
\begin{abstract}
The  main aim of this article is to give sufficient conditions for a family of meromorphic mappings of a domain $D$ in $\C^n$ into $\P^N(\C)$  to be meromorphically normal if they satisfy only some very weak conditions with respect to 
moving hypersurfaces in $\P^N(\C)$, namely that their intersections with these moving hypersurfaces, which may moreover depend on the meromorphic maps,
are in some sense uniform. Our results generalize and
complete previous results in this area, especially the works of Fujimoto \cite{Fu}, Tu \cite{Tu1}, \cite{Tu2},  Tu-Li \cite{Tu-Li}, Mai-Thai-Trang \cite{MTT} and the recent work of Quang-Tan \cite{QT}. 
\end{abstract}

\section{Introduction.}
Classically, a family $\mathcal F$ of holomorphic functions on a domain $D\subset \C$ is said to be (holomorphically) normal
if every sequence in $\mathcal F$ contains a subsequence which converges uniformly on every compact subset of $D$ to a holomorphic map from $D$ into $P^1$.

In 1957 Lehto and Virtanen \cite{LeVi} introduced the concept of normal
meromorphic functions in connection with the study of boundary behaviour 
of meromorphic functions of one complex variable. Since then
normal families of holomorphic maps have been studied intensively, resulting in an extensive
development in the one complex variable context and in generalizations to the
several complex variables setting (see \cite{Za}, \cite{JK1}, \cite{JK2}, \cite{AK} and the
references cited in \cite{Za} and \cite{JK2}).

The first ideas and results on normal families of meromorphic mappings of several complex variables
were introduced by Rutishauser \cite{Rut} and Stoll \cite{S1}.
 
The notion of a meromorphically normal family into the $N$-dimen\-sional complex projective space was introduced by H. Fujimoto \cite{Fu} (see subsection \ref{mero} below  for the definition of these concepts). Also in 
\cite{Fu}, he gave some sufficient conditions for a family of meromorphic mappings of a domain $D$ in $\C^n$ into $\P^N(\C)$ to be meromorphically normal. In 2002, Z. Tu \cite{Tu2} considered meromorphically normal families of meromorphic mappings of a domain $D$ in $\C^n$ into $\P^N(\C)$ for hyperplanes. Generalizing the above results of Fujimoto and Tu, in 2005, Thai-Mai-Trang 
\cite{MTT} gave a sufficient condition for the meromorphic normality of a family of 
meromorphic mappings of a domain $D$ in $\C^n$ into $\P^N(\C)$ for fixed hypersurfaces (see section 2 below for the necessary definitions,
in particular subsection 2.3 for the definition of $D(...)$):

\noindent
{\bf Theorem A.} (\cite[Theorem A]{MTT})\ {\it Let $\mathcal F$ be a family of meromorphic mappings of a domain $D$ in
$\C^n$ into $\P^N(\C)$. Suppose that for each $f\in \mathcal F$, there exist $q\ge 2N+1$
hypersurfaces $H_1(f),H_2(f),...,H_q(f)$  in 
$\P^N(\C)$  with 
$$inf \big\{D(H_1(f),...,H_q(f)) ;f\in{ \mathcal F} \big\} > 0
\text { and }f(D)\not\subset H_i(f)\ (1\le i \le N+1),$$
where $q$ is independent of $f$, but the hypersurfaces $H_i(f)$ may depend on $f$,
such that the following two conditions are satisfied:

i)  For any fixed compact subset $K$ of $D$, 
the $2(n-1)$-dimensional Lebesgue areas of $f^{-1}(H_i(f))\cap K$
$(1\le i \le N+1)$
with counting multiplicities  are bounded above for all $f$ in $\mathcal F$.

ii) There exists a closed subset $S$ of $D$ with $\Lambda^{2n-1}(S)=0$ such that for 
any fixed compact subset $K$ of $D-S$, 
the $2(n-1)$-dimensional Lebesgue areas of $f^{-1}(H_i(f))\cap K$
$(N+2\le i \le q)$
with counting multiplicities  are bounded above for all $f$ in $\mathcal F$.

Then $\mathcal F$ is a meromorphically normal family on $D$.}

Recently, motivated by the investigation of Value Distribution Theory for moving hyperplanes (for example Ru and Stoll  \cite{RS1},  \cite{RS2}, Stoll \cite{S2},  and Thai-Quang  \cite{TQ1},  \cite{TQ2}), the study of the normality of families of 
meromorphic mappings of a domain $D$ in $\C^n$ into $\P^N(\C)$ for moving hyperplanes or  hypersurfaces has started. While a substantial amount of
information has   been   amassed   concerning 
the normality of families of meromorphic mappings for fixed targets through the
years, the present knowledge of this problem for moving targets has
remained extremely meagre. There are only a few such  results in some restricted situations (see  \cite{Tu-Li},  \cite{QT}). For instance, we recall a recent 
result of Quang-Tan  \cite{QT} which is the best result available 
at present and which generalizes Theorem 2.2 of Tu-Li \cite{Tu-Li}:

\noindent
{\bf Theorem B.} (see  \cite[Theorem 1.4]{QT})\ {\it  Let $\mathcal F$ be a family of meromorphic mappings of a domain
$D\subset \C^n$ into $\P^N(\C),$ and let $Q_1,\cdots,Q_q\ (q \geq 2N+1)$ be $q$ moving hypersurfaces
in $\P^N(\C)$  in (weakly) general position such that 

i) For any fixed compact subset $K$ of $D,$ the $2(n-1)$-dimensional
Lebesgue areas of $f^{-1}(Q_j)\cap K \ (1 \leq j \leq N+1)$ counting multiplicities
are uniformly bounded above for all $f$ in $\mathcal F.$

ii) There exists a thin analytic subset $S$ of $D$ such that for any fixed
compact subset $K$ of $D,$ the $2(n-1)$-dimensional Lebesgue areas of
$f^{-1}(Q_j)\cap (K-S)\ (N + 2 \leq j \leq q)$ regardless of multiplicities are
uniformly bounded above for all $f$ in $\mathcal F.$

Then $\mathcal F$ is a meromorphically normal family on $D.$}

We would like to emphasize that, in  Theorem B, the $q$ moving hypersurfaces $Q_1,\cdots,Q_q$ in $\P^N(\C)$ are independent on $f\in \mathcal F$ (i.e. they are common
for all $f\in\mathcal F.$)  Thus, the following question arised na\-turally at this point: {\it Does Theorem A hold for moving hypersurfaces $H_1(f),H_2(f),...,H_q(f)$ which  may depend on $f\in\mathcal F?$}
The main aim of this article is to give an affirmative answer to this question. Namely, we prove the following result which generalizes both Theorem A and Theorem B:

\begin{thr}\label{theorem 1}
{\it Let $\mathcal F$ be a family of meromorphic mappings of a domain $D$ in
$\C^n$ into $\P^N(\C)$. Suppose that for each $f\in \mathcal F$, there exist $q\ge 2N+1$
moving hypersurfaces $H_1(f),H_2(f),...,H_q(f)$  in 
$\P^N(\C)$  
such that the following three conditions are satisfied:

i) For each $1 \leqslant k \leqslant q,$ the coefficients of the homogeneous polynomials $Q_k(f)$ which define the $H_k(f)$  are bounded above uniformly for all $f$ in $\mathcal F$ on compact subsets of $D$, and for any sequence $\{ f^{(p)}\} \subset \mathcal F$, there exists $z \in D$ (which may depend on the sequence) such that 
$$inf_{p \in \N}\big\{D(Q_1(f^{(p)}),...,Q_q(f^{(p)}))(z) \big\} > 0 \mbox \,.$$

ii)  For any fixed compact subset $K$ of $D$, 
the $2(n-1)$-dimensional Lebesgue areas of $f^{-1}(H_i(f))\cap K$
$(1\le i \le N+1)$
counting multiplicities  are bounded above for all $f$ in $\mathcal F$ (in particular $f(D)\not\subset H_i(f)\ (1\le i \le N+1)$).

iii) There exists a closed subset $S$ of $D$ with $\Lambda^{2n-1}(S)=0$ such that for 
any fixed compact subset $K$ of $D-S$, 
the $2(n-1)$-dimensional Lebesgue areas of $f^{-1}(H_i(f))\cap K$
$(N+2\le i \le q)$
ignoring multiplicities  are bounded above for all $f$ in $\mathcal F$.

Then $\mathcal F$ is a meromorphically normal family on $D$.}

\end{thr}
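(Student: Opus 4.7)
The plan is to reduce to the common-target situation of Theorem B by passing to limits of the hypersurfaces $H_k(f)$ via Montel's theorem, and then to adapt the proof of Theorem B so that the $f$-dependent polynomials are inserted directly into its integral estimates.

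First I would take an arbitrary sequence $\{f^{(p)}\}\subset \mathcal F$ and apply Montel's theorem coefficient by coefficient to the homogeneous polynomials $Q_k(f^{(p)})$, $1\le k\le q$. By the first half of condition (i), every coefficient is a sequence of holomorphic functions on $D$ that is locally uniformly bounded, so a diagonal extraction produces a subsequence (still denoted $\{f^{(p)}\}$) along which all coefficients converge locally uniformly to holomorphic limits. These limits define common limit moving hypersurfaces $H_k^{\infty}$, cut out by polynomials $Q_k^{\infty}$, for $1\le k\le q$. By the second half of condition (i) and the continuity of the discriminant function, the inequality $D(Q_1^{\infty},\ldots,Q_q^{\infty})(z_0)>0$ holds at the point $z_0$ provided by (i); hence $H_1^{\infty},\ldots,H_q^{\infty}$ are in weakly general position at $z_0$, and therefore on an open dense subset of $D$ by analyticity. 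Next I would localise: since meromorphic normality is a local property, it suffices to fix a small polydisc $\Delta\subset D$, choose normalised reduced representations $\tilde f^{(p)}=(f^{(p)}_0,\ldots,f^{(p)}_N)$ on $\Delta$, and extract a further subsequence converging meromorphically on $\Delta$.

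The heart of the proof is to run the argument of Theorem B with the common limit targets $H_k^{\infty}$ playing the role of the common moving hypersurfaces required there, but with the $f$-dependent polynomials $Q_k(f^{(p)})$ inserted in the places where divisorial information is used. Writing $Q_k(f^{(p)})=Q_k^{\infty}+R_k^{(p)}$, the coefficients of $R_k^{(p)}$ tend to zero locally uniformly on $D$; the limits $Q_k^{\infty}$ guarantee that the auxiliary functions built from the $Q_k(f^{(p)})(\tilde f^{(p)})$ (logarithmic differences, Nevanlinna/Ahlfors-type quantities, etc.) remain well-defined and uniformly controlled, while hypotheses (ii)--(iii) directly furnish the area bounds that feed into the integral estimates. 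Once these estimates close up, a subsequence converges meromorphically on $\Delta$, and the limit is extended across $S$ in the standard way using $\Lambda^{2n-1}(S)=0$.

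The main obstacle is precisely this adaptation. Condition (iii), in which areas are bounded only ignoring multiplicities and only on $D\setminus S$, is particularly delicate: one cannot simply replace $Q_k(f^{(p)})$ by $Q_k^{\infty}$ in the divisor computations, because the perturbation $R_k^{(p)}$ could introduce new zeros or alter multiplicities. The resolution is to keep $Q_k(f^{(p)})$ inside every area integral and every logarithmic-derivative estimate, using $Q_k^{\infty}$ only to secure the weak general position needed to keep the constants in these estimates uniform in $p$; the locally uniform smallness of the coefficients of $R_k^{(p)}$ is then used to absorb error terms that vanish as $p\to\infty$. Verifying that the proof of Theorem B can indeed be rewritten in this way, with the $f$-dependence propagating cleanly through the entire chain of estimates, is the main technical work.
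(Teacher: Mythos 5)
Your first step --- extracting a subsequence along which the moving targets converge --- is in the right spirit, but as written it has a genuine gap. You apply Montel's theorem ``coefficient by coefficient'' to the $Q_k(f^{(p)})$, which tacitly assumes that these polynomials have a fixed (or at least bounded) degree, so that there is a fixed finite list of coefficient functions to diagonalize over. Nothing in the hypotheses fixes the degrees: the $H_k(f)$ depend on $f$, and a priori $\deg Q_k(f^{(p)})\to\infty$ is possible, in which case there is no limit homogeneous polynomial at all. The paper devotes a separate lemma (Lemma \ref{lemma 4}) to showing that the condition $D(Q_1,\dots,Q_q)(z_0)>\delta>0$, together with the uniform bound on the coefficients, forces a bound $M(\delta,N,q)$ on the degrees; the authors point out that this very issue is a gap even in the earlier fixed-target literature. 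Without this step your extraction of $Q_k^{\infty}$ does not get off the ground, and the subsequent continuity argument for $D(Q_1^{\infty},\dots,Q_q^{\infty})(z_0)>0$ (an infimum over the unit sphere) also relies on having uniform convergence of polynomials of a common bounded degree.

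The larger problem is that the ``heart'' of your proof is an unexecuted adaptation of Theorem B via ``Nevanlinna/Ahlfors-type quantities'' and ``integral estimates,'' which is not how normality is actually obtained here, and you give no concrete mechanism by which these estimates ``close up.'' The paper's route is quite different: conditions (ii) and (iii) are fed into Stoll's compactness theorems for divisors and for analytic sets to produce limit sets $S_k$ and an exceptional pure $(n-1)$-dimensional set $E$; holomorphic normality on $(D-S)-E$ is then proved by a Zalcman-type rescaling (Lemma \ref{lemma 9}) combined with the Noguchi--Winkelmann hyperbolicity of complements of hypersurfaces in general position, which forces the rescaled limit curve to be constant; finally, meromorphic convergence is propagated from $(D-S)-E$ to $D-S$ and then to $D$ by Lemma \ref{lemma 6n}, a delicate argument that uses the convergence of the divisors $(f^{(p)})^{-1}(H_{k_0}(f^{(p)}))$ for a suitably chosen $k_0$, extracts $d$-th roots of unity to correct the local representations, and patches them along paths in the connected set $B(z_1,r)-(S\cup\{h=0\})$. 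Your phrase ``the limit is extended across $S$ in the standard way using $\Lambda^{2n-1}(S)=0$'' is exactly where condition (ii) (area bounds \emph{counting} multiplicities for $N+1$ of the hypersurfaces) must be used and where the real technical content lies; as it stands, this step is asserted rather than proved, and the proposed value-distribution-theoretic machinery does not obviously supply it.
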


In the special case of a family of holomorphic mappings, we get with the same proof methods:
\begin{thr}\label{theorem 2}
{\it Let $\mathcal F$ be a family of holomorphic mappings of a domain $D$ in
$\C^n$ into $\P^N(\C)$. Suppose that for each $f\in \mathcal F$, there exist $q\ge 2N+1$
moving hypersurfaces $H_1(f),H_2(f),...,H_q(f)$  in 
$\P^N(\C)$  
such that the following three conditions are satisfied:

i) For each $1 \leqslant k \leqslant q,$ the coefficients of the homogeneous polynomials $Q_k(f)$ which define the $H_k(f)$  are bounded above uniformly  for all $f$ in $\mathcal F$ on compact subsets of $D$, and for any sequence $\{ f^{(p)}\} \subset \mathcal F$, there exists $z \in D$ (which may depend on the sequence) such that 
$$inf_{p \in \N}\big\{D(Q_1(f^{(p)}),...,Q_q(f^{(p)}))(z) \big\} > 0 \mbox \,.$$

ii)  $f(D)\cap H_i(f)= \emptyset \ (1 \leqslant i \leqslant N+1)$  for any $f \in F$ .

iii) There exists a closed subset $S$ of $D$ with $\Lambda^{2n-1}(S)=0$ such that for 
any fixed compact subset $K$ of $D-S,$ 
the $2(n-1)$-dimensional Lebesgue areas of $f^{-1}(H_i(f))\cap K\ (N+2\le i \le q)$
ignoring multiplicities  are bounded above for all $f$ in $\mathcal F$.

Then $\mathcal F$ is a holomorphically normal family on $D$.}
\end{thr}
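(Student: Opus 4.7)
The plan is to mirror the argument for Theorem~\ref{theorem 1}, taking advantage of two simplifications afforded by the holomorphic setting: no indeterminacy locus has to be tracked, and hypothesis~(ii) of Theorem~\ref{theorem 2} is strictly stronger than the counting-multiplicity area bound appearing in hypothesis~(ii) of Theorem~\ref{theorem 1}, so that once Theorem~\ref{theorem 1} is available the present statement requires no essentially new idea.

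Given a sequence $\{f^{(p)}\}\subset\mathcal F$, the first step is to extract limit targets. The uniform coefficient bound in~(i), together with Montel's theorem applied to the coefficient functions of $Q_k(f^{(p)})$, yields a subsequence (still denoted $\{f^{(p)}\}$) along which $Q_k(f^{(p)})\to Q_k$ locally uniformly on $D$ for each $k=1,\ldots,q$. Hypothesis~(i) then provides a point $z_0\in D$ with $D(Q_1,\ldots,Q_q)(z_0)>0$, and by continuity the open set $V=\{z\in D : D(Q_1,\ldots,Q_q)(z)>0\}$ is a neighborhood of $z_0$ on which the moving hypersurfaces $H_1,\ldots,H_q$ cut out by the limit polynomials are in (weakly) general position.

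The second step is to apply Theorem~B on each connected component of $V$ to $\{f^{(p)}\}$ with the fixed common moving targets $H_1,\ldots,H_q$. The area hypotheses of Theorem~B have to be transferred from the moving targets $H_i(f^{(p)})$ to the limits $H_i$: for $1\le i\le N+1$ the non-intersection assumption~(ii), combined with the local uniform convergence $Q_i(f^{(p)})\to Q_i$, forces $f^{(p)-1}(H_i)\cap K$ to have uniformly bounded area (counting multiplicities) on every compact $K\subset V$; for $N+2\le i\le q$ the corresponding bound (ignoring multiplicities) follows analogously from~(iii). Theorem~B then produces a further subsequence converging uniformly on compact subsets of $V$ to a meromorphic limit, and Hurwitz applied to the scalar holomorphic functions $Q_i(f^{(p)})\circ\tilde f^{(p)}/\|\tilde f^{(p)}\|^{\deg Q_i}$, obtained through reduced representations $\tilde f^{(p)}$, shows the limit omits $H_1,\ldots,H_{N+1}$ and is therefore holomorphic on $V$.

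Finally, the complement $D\setminus V$ is a thin analytic subset of $D$ (the zero locus of the holomorphic discriminant-type function $D(Q_1,\ldots,Q_q)$), and the limit constructed on $V$ takes values in the affine chart $\P^N(\C)\setminus(H_1\cup\cdots\cup H_{N+1})$; a standard removable-singularity theorem for normal families through thin analytic sets then extends the limit to a holomorphic map $f:D\to\P^N(\C)$, yielding the desired normality. The principal technical obstacle lies in the transfer step above: controlling $f^{(p)-1}(H_i)\cap K$ by $f^{(p)-1}(H_i(f^{(p)}))\cap K$ through the uniform convergence of defining polynomials is the technical nucleus also of the proof of Theorem~\ref{theorem 1}, and the holomorphy assumption only simplifies the bookkeeping around it.
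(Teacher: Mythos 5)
Your reduction to Theorem~B breaks down at exactly the step you identify as the ``technical nucleus,'' and the assertion that it ``follows'' from the convergence of the defining polynomials is not true. Knowing that $Q_i(f^{(p)})\to Q_i$ locally uniformly and that $f^{(p)}(D)\cap H_i(f^{(p)})=\emptyset$ gives \emph{no} control on $(f^{(p)})^{-1}(H_i)$ for the \emph{limit} hypersurface $H_i$: the function $Q_i(z)\big(\tilde f^{(p)}(z)\big)$ is a completely different object from $Q_i(f^{(p)})(z)\big(\tilde f^{(p)}(z)\big)$, and since the $f^{(p)}$ have not yet been shown to converge, its zero set can have arbitrarily large area even though the latter function never vanishes (think of maps to $\P^1$ omitting the value $1/p$ while hitting $0$ as often as they like). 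So the hypotheses of Theorem~B for the common targets $H_1,\dots,H_q$ cannot be verified, and the argument is circular: the area bounds you need would essentially require the convergence you are trying to prove. The paper never transfers the targets; it keeps the $f$-dependent hypersurfaces throughout, applies Stoll's divisor-convergence results directly to $\nu\big(f^{(p)},H_k(f^{(p)})\big)$, proves normality off an exceptional analytic set by a Zalcman-rescaling argument combined with the Noguchi--Winkelmann hyperbolicity theorem, and then extends across the exceptional set by its Lemma~\ref{lemma 6n} (whose part~b) is where condition~(ii) and Hurwitz enter, much as in your last step).

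Two further points. First, your opening Montel step is incomplete: the polynomials $Q_k(f^{(p)})$ may a priori have degrees depending on $p$ and unbounded, in which case there are not finitely many coefficient functions to which Montel applies; the paper's Lemma~\ref{lemma 4} exists precisely to show that the uniform lower bound on $D(Q_1(f^{(p)}),\dots,Q_q(f^{(p)}))(z_0)$ forces a uniform degree bound, and this is flagged in the introduction as one of the main new difficulties for moving targets. Second, $D(Q_1,\dots,Q_q)(z)$ is defined via infima of $|Q_{j_0}(z)(\omega)|^2+\cdots$ over the unit sphere and is \emph{not} a holomorphic function of $z$, so $D\setminus V$ is not the zero locus of a holomorphic function as you claim; to get a thin analytic exceptional set one must pass to the resultants $\widetilde R_L$ of the limit polynomials, as the paper does. (There is also a hypothesis mismatch in invoking Theorem~B: the set $S$ in Theorem~\ref{theorem 2} is only closed with $\Lambda^{2n-1}(S)=0$, not analytic.) As written, the proposal does not constitute a proof.
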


\begin{nx}\ {\it There are several examples in Tu  \cite{Tu2} showing that the conditions  in $i)$,  $ii)$ and $iii)$ in Theorem \ref{theorem 1} and Theorem \ref{theorem 2} cannot be omitted.}
\end{nx}

We also generalise several results of Tu  \cite{Tu1},  \cite{Tu2},  \cite{Tu-Li}
which allow not to take into account at all the components 
of $f^{-1}(H_i(f))$ of high order:

The following theorem generalizes Theorem 2.1 of Tu-Li \cite{Tu-Li} from the case of moving hyperplanes which are independant of $f$ to moving hypersurfaces which may depend on $f$ (in fact observe that for moving
hyperplanes the condition $H_1, \cdots, H_q$ in $\widetilde{\mathcal S}\big(\{T_i\}_{i = 0}^N\big)$ is satisfied by taking $T_0,...,T_N$ any (fixed 
or moving)
$N+1$ hyperplanes in general position). 

\begin{thr}\label{theorem 4}
{\it Let $\mathcal F$ be a family of holomorphic mappings of a domain $D$ in $\C^n$ into $\P^N\big(\C\big).$ Let $q\geqslant 2N + 1$ be a positive integer. Let  $m_1,\cdots,m_q$ be positive intergers or 
$\infty$ such that
$$\sum_{j=1}^{q}\bigg(1 - \dfrac{N}{m_j}\bigg) > N+1 .$$
Suppose that for each $f \in \mathcal F$, there exist  $N + 1$ moving hypersurfaces 
$T_0\big(f\big),\cdots,T_N\big(f\big)$ in $\P^N\big(\C\big)$ of common degree and there exist $q$ moving hypersurfaces $H_1\big(f\big), \cdots, H_q\big(f\big)$ in $\widetilde{\mathcal S}\big(\{T_i\big(f\big)\}_{i = 0}^N\big)$ such that the following conditions are satisfied:

i) For each $0 \leqslant i \leqslant N,$ the coefficients of the homogeneous polynomials $P_i(f)$ which define the $T_i(f)$  are bounded above uniformly 
 for all $f$ in $\mathcal F$ on compact subsets of $D$, and for all 
$1 \leqslant j \leqslant q,$
the coefficients $b_{ij}(f)$ of the linear combinations of the 
$P_i(f)$, $i=0,...,N$ which define the
homogeneous polynomials $Q_j(f)$ which define the $H_j(f)$  are bounded above uniformly   for all $f$ in $\mathcal F$ on compact subsets of $D$, and
for any fixed $z \in D$,
$$\inf \left\{D\big(Q_1\big(f\big), \cdots, Q_q\big(f\big)\big)(z):\  f \in \mathcal F\right\} > 0.$$

ii) $f$ intersects $H_j\big(f\big) $ with multiplicity at least $m_j $ for each $1 \leq j\leq q$  (see subsection 2.6 for the necessary definitions).  

Then $\mathcal F$ is a holomorphically normal family on $D$.}
\end{thr}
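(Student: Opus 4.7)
The plan is to take an arbitrary sequence $\{f^{(p)}\}\subset \mathcal F$ and extract a subsequence converging uniformly on compact subsets of $D$ to a holomorphic map $f\colon D\to\P^N(\C)$. Since the coefficients of the $P_i(f^{(p)})$ and the $b_{ij}(f^{(p)})$ (and hence of the $Q_j(f^{(p)})$) are holomorphic functions uniformly bounded on compact subsets of $D$, Montel's theorem applied coefficient-wise produces a subsequence (still denoted $\{f^{(p)}\}$) for which $P_i(f^{(p)})\to \widehat P_i$ and $Q_j(f^{(p)})\to \widehat Q_j$ uniformly on compact subsets of $D$. Each $\widehat Q_j$ is a linear combination of the $\widehat P_i$, so the limit hypersurfaces $\widehat H_j$ still lie in $\widetilde{\mathcal S}(\{\widehat T_i\}_{i=0}^N)$, and the pointwise lower bound on $D(Q_1(f^{(p)}),\ldots,Q_q(f^{(p)}))(z)$ passes to the limit, showing that $\widehat H_1,\ldots,\widehat H_q$ are in general position at every $z\in D$.

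Next, assume for contradiction that $\{f^{(p)}\}$ fails to be normal at some point $z_0\in D$. By a Zalcman-type rescaling lemma for holomorphic maps into $\P^N(\C)$ (of the sort already used in \cite{Tu1, Tu2, Tu-Li}), there exist $z_p\to z_0$, scales $\rho_p\to 0^+$, and unit vectors $v_p\to v_0\in\C^n$ such that $g^{(p)}(\zeta):=f^{(p)}(z_p+\rho_p v_p\zeta)$ converges uniformly on compact subsets of $\C$ to a non-constant holomorphic map $g\colon\C\to\P^N(\C)$. The crucial step is to transfer the multiplicity hypothesis to this limit. Since $z_p+\rho_p v_p\zeta\to z_0$ locally uniformly in $\zeta$ and $Q_j(f^{(p)})\to\widehat Q_j$ uniformly on compacts, the substituted polynomials $Q_j(f^{(p)})(z_p+\rho_p v_p\zeta,\,\cdot\,)$ converge to the \emph{fixed} homogeneous polynomial $\widehat Q_j(z_0,\,\cdot\,)$ in the homogeneous coordinates of $\P^N(\C)$; working with reduced representations of $g^{(p)}$ that converge, after normalization, to a reduced representation of $g$, Hurwitz's theorem applied to the zero divisors of the pulled-back sections then yields that $g$ intersects the fixed hypersurface $\widehat H_j(z_0)\subset\P^N(\C)$ with multiplicity at least $m_j$ for each $j$.

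Finally, the $\widehat H_j(z_0)$ are $q\geq 2N+1$ fixed hypersurfaces in general position in $\P^N(\C)$, and the assumption $\sum_{j=1}^q(1-N/m_j)>N+1$ allows us to invoke a Nochka--Eremenko-type defect relation (i.e., a Picard-type theorem with truncated counting functions) for holomorphic curves $\C\to\P^N(\C)$ meeting hypersurfaces in general position with assigned multiplicities; this forces $g$ to be constant, contradicting non-constancy and completing the proof.

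The main obstacle is the multiplicity-transfer step: one must simultaneously handle the convergence of the moving targets $H_j(f^{(p)})$ and of the rescaled maps $g^{(p)}$, carefully controlling local reduced representations under the rescaling $\zeta\mapsto z_p+\rho_p v_p\zeta$ and invoking Hurwitz's theorem in such a way that no intersection multiplicity is lost in the limit. The role of the class $\widetilde{\mathcal S}(\{T_i(f)\}_{i=0}^N)$ is precisely to preserve general position of the limit targets (via condition (i)) and to legitimize the reduction from a moving-target condition on $f^{(p)}$ to a fixed-target SMT applied to $g$ in the final step.
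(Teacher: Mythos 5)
Your skeleton (argue by contradiction, rescale via a Zalcman-type lemma, transfer the multiplicity condition to the limit curve by Hurwitz, then kill the non-constant limit $g$ with a Picard-type theorem) is exactly the paper's strategy, and the first two steps are essentially correct. The genuine gap is in the final step: you invoke ``a Nochka--Eremenko-type defect relation for holomorphic curves $\C\to\P^N(\C)$ meeting hypersurfaces in general position with assigned multiplicities'' with the hyperplane-type bound $\sum_{j}(1-N/m_j)>N+1$. No such theorem is available for general hypersurfaces: the constant $N+1$ and, more importantly, the truncation level $N$ in $1-N/m_j$ are specific to Nochka's theorem for \emph{hyperplanes} (the paper's Lemma \ref{lemma 10}, quoted from Nochka). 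This is precisely why the hypothesis $H_j(f)\in\widetilde{\mathcal S}\big(\{T_i(f)\}_{i=0}^N\big)$ is imposed: since $Q_j=\sum_i b_{ji}P_i$, the composed curve $F=(P_0(\tilde g):\cdots:P_N(\tilde g)):\C\to\P^N(\C)$ meets the \emph{hyperplanes} $\{\sum_i b_{ji}\omega_i=0\}$ (which are in general position by the paper's Lemma \ref{lemma 11}) with multiplicity at least $m_j$, so Nochka's theorem forces $F$ to be constant; and because $P_0(z_0),\dots,P_N(z_0)$ are in general position they define a finite morphism $\P^N\to\P^N$, whence $g$ itself is constant (Lemma \ref{lemma 12}). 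Your closing remark attributes to the class $\widetilde{\mathcal S}$ only the preservation of general position and the moving-to-fixed reduction; its essential role — enabling the reduction to the hyperplane case — is missing, and without it the contradiction cannot be reached.

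A secondary but real issue is your first step: ``Montel's theorem applied coefficient-wise'' presupposes that along the sequence $\{f^{(p)}\}$ the degrees of the $P_i(f^{(p)})$ (hence of the $Q_j(f^{(p)})$) stabilize, but the common degree is allowed to depend on $f$ and could a priori tend to infinity, in which case there is no fixed finite list of coefficient functions to extract from. The paper handles this with Lemma \ref{lemma 4}, which shows that the uniform positive lower bound on $D(Q_1(f),\dots,Q_q(f))(z)$ together with the local boundedness of the coefficients forces a uniform bound on the degrees (this is the content of Lemma \ref{lemma 5n}/\ref{lemma 5nn}, and the authors point out that the absence of such a bound was a gap even in earlier work on fixed targets). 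You need this before Montel can be applied.
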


The following theorem generalizes Theorem 1 of Tu \cite{Tu2} from the case of fixed hyperplanes  to moving hypersurfaces  (in fact observe that for
hyperplanes the condition $H_1(f), \cdots, H_q(f)$ in $\widetilde{\mathcal S}\big(\{T_i(f)\}_{i = 0}^N\big)$ is satisfied by taking $T_0(f),...,T_N(f)$ any 
$N+1$ hyperplanes in general position).

\begin{thr}\label{theorem 5}
{\it  Let $\mathcal F$ be a family of meromorphic mappings of a domain $D$ in $\C^n$ into $\P^N\big(\C\big).$ Let $q\geqslant 2N + 1$ be a positive integer. Suppose that for each $f \in \mathcal F$, there exist  $N + 1$ moving hypersurfaces 
$T_0\big(f\big),\cdots,T_N\big(f\big)$ in $\P^N\big(\C\big)$ of common degree and there exist $q$ moving hypersurfaces $H_1\big(f\big), \cdots, H_q\big(f\big)$ in $\widetilde{\mathcal S}\big(\{T_i\big(f\big)\}_{i = 0}^N\big)$ such that the following conditions are satisfied:

i) For each $0 \leqslant i \leqslant N,$ the coefficients of the homogeneous polynomials $P_i(f)$ which define the $T_i(f)$  are bounded above uniformly  
 for all $f$ in $\mathcal F$ on compact subsets of $D$, and for all 
$1 \leqslant j \leqslant q,$
the coefficients $b_{ij}(f)$ of the linear combinations of the 
$P_i(f)$, $i=0,...,N$ which define the
homogeneous polynomials $Q_j(f)$ which define the $H_j(f)$  are bounded above uniformly   for all $f$ in $\mathcal F$ on compact subsets of $D$, 
and for any sequence $\{ f^{(p)}\} \subset \mathcal F$, there exists $z \in D$ (which may depend on the sequence) such that 
$$inf_{p \in \N}\big\{D(Q_1(f^{(p)}),...,Q_q(f^{(p)}))(z) \big\} > 0 \mbox \,.$$

ii)  For any fixed compact $K$ of $D,$ the $2(n-1)$-dimensional Lebesgue areas of $f^{-1}\big(H_k(f)\big) \cap K \ (1\leq k \leq N+1)$ counting multiplicities  are bounded above for all $f \in \mathcal F$  (in particular $f\big(D\big) \not\subset  H_k\big(f\big)\ (1\leq k\leq N+1)$).

iii)  There exists a closed subset $S$ of $D$ with $\Lambda^{2n-1}(S)=0$ 
 such that for any fixed compact subset $K$ of $D - S,$  the $2(n-1)$-dimensional  Lebesgue areas of
\begin{equation*}
\left\{z \in \supp \nu\big(f, H_k(f)\big)\big|\nu\big(f, H_k(f)\big)(z) < m_k \right\} \cap K \ (N+2\leq k\leq q)
\end{equation*}
ignoring multiplicities for all $f \in \mathcal F$ are bounded above, where $\{m_k\}_{k = N+2}^q$ are fixed positive intergers or $\infty$ with
\begin{equation*}
\sum_{k =N+2}^q\dfrac{1}{m_k}< \dfrac{q - \big(N+1\big)}{N}.
\end{equation*}

Then $\mathcal F$ is a meromorphically normal family on $D$.}
\end{thr}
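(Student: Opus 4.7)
The plan is to deduce Theorem \ref{theorem 5} from Theorem \ref{theorem 1} by two reductions: first, exploit the structural hypothesis $H_j(f) \in \widetilde{\mathcal S}(\{T_i(f)\}_{i=0}^N)$ to pass to an auxiliary family for which the moving hypersurfaces become moving hyperplanes via a Veronese-type construction; second, use a truncated second main theorem for moving hyperplane targets to promote the multiplicity-restricted area hypothesis (iii) to the unrestricted area bound required by Theorem \ref{theorem 1}. Meromorphic normality being a local property, fix $z_0 \in D$ and a small polydisc $\Delta \ni z_0$. Given an arbitrary sequence $\{f^{(p)}\} \subset \mathcal F$, hypothesis (i) together with Montel's theorem lets one extract a subsequence (still denoted $\{f^{(p)}\}$) so that the coefficient functions of $P_i(f^{(p)})$, of $b_{ij}(f^{(p)})$, and consequently of $Q_j(f^{(p)}) = \sum_{i=0}^N b_{ij}(f^{(p)})\, P_i(f^{(p)})$, converge locally uniformly on $D$ to holomorphic limits $P_i^\infty, b_{ij}^\infty, Q_j^\infty$. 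The pointwise infimum hypothesis on the discriminants, combined with continuity, forces $D(Q_1^\infty, \ldots, Q_q^\infty) \not\equiv 0$, so the limit moving hypersurfaces are in (weakly) general position on a nonempty open subset of $D$.

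Next, for each $p$ introduce an auxiliary meromorphic map $\tilde f^{(p)} : D \to \P^N(\C)$ whose reduced representation is derived from $(P_0(f^{(p)}) \circ f^{(p)} : \cdots : P_N(f^{(p)}) \circ f^{(p)})$. The identity $Q_j(f^{(p)}) = \sum_i b_{ij}(f^{(p)})\, P_i(f^{(p)})$ implies that, outside the indeterminacy locus of the $P_i(f^{(p)})$, the analytic set $f^{(p)-1}(H_j(f^{(p)}))$ coincides with $\tilde f^{(p)-1}(\tilde H_j^{(p)})$ with matching multiplicities, where $\tilde H_j^{(p)}$ is the moving hyperplane cut out by $\sum_i b_{ij}(f^{(p)})(z)\, w_i = 0$. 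Hypothesis (ii) immediately yields the required area bound (counting multiplicities) for $\tilde f^{(p)-1}(\tilde H_k^{(p)})$ for $1 \leq k \leq N+1$.

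The decisive step is to derive, for $N+2 \leq k \leq q$, the unrestricted area bound (ignoring multiplicities) for $\tilde f^{(p)-1}(\tilde H_k^{(p)})$ on compact subsets of $\Delta \setminus S$ from the multiplicity-restricted bound of hypothesis (iii). This is where the numerical condition $\sum_{k=N+2}^q 1/m_k < (q-N-1)/N$ enters: invoking a truncated second main theorem for meromorphic mappings into $\P^N(\C)$ with moving hyperplane targets, in the spirit of Thai-Quang \cite{TQ1,TQ2}, the counting function of the ``high multiplicity'' part $\{z : \nu(\tilde f^{(p)}, \tilde H_k^{(p)})(z) \geq m_k\}$ is controlled by a convex combination of the characteristic function of $\tilde f^{(p)}$ and the counting function of the ``low multiplicity'' part, whence the desired uniform area bound. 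Once this is in place, Theorem \ref{theorem 1} applied to $\{\tilde f^{(p)}\}$ against $\{\tilde H_j^{(p)}\}$ furnishes a meromorphically convergent subsequence of $\{\tilde f^{(p)}\}$ near $z_0$; descending back to $\{f^{(p)}\}$ uses the locally uniform convergence of the $P_i(f^{(p)})$ to $P_i^\infty$, the fact that $(P_0^\infty : \cdots : P_N^\infty)$ is generically finite and unramified on an open set (by the general-position property of the limits), and Stoll's extension theorem to cross the thin set $S$ on which $\Lambda^{2n-1}(S) = 0$.

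The principal obstacle I anticipate is precisely the third step: making the truncated second main theorem for moving hyperplanes effective \emph{uniformly} in $p$ when the targets $\tilde H_k^{(p)}$ themselves vary with $p$. This requires controlling the Nochka-type weights, characteristic functions, and error terms uniformly in terms of the coefficient data, and rests essentially on the locally uniform convergence of the coefficients of $P_i(f^{(p)})$ and $b_{ij}(f^{(p)})$ established at the outset together with the uniform lower bound on $D(Q_1(f^{(p)}), \ldots, Q_q(f^{(p)}))$ at the distinguished point $z$ furnished by hypothesis (i).
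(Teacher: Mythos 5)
Your opening reduction (Montel applied to the coefficients of the $P_i(f^{(p)})$ and the $b_{ij}(f^{(p)})$, hence of $Q_j(f^{(p)})=\sum_i b_{ij}(f^{(p)})P_i(f^{(p)})$, together with persistence of general position at the distinguished point) is exactly the content of Lemma \ref{lemma 5nn}, so that part matches the paper. The genuine gap is your ``decisive step''. You propose to upgrade hypothesis (iii) --- an area bound only on the \emph{low-multiplicity} locus $\{\nu(f,H_k(f))<m_k\}$ --- to the unrestricted area bound on all of $f^{-1}(H_k(f))$ that Theorem \ref{theorem 1} requires, by invoking a truncated second main theorem for moving hyperplanes. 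This cannot work as stated, for two reasons. First, the inequality you would need bounds the high-multiplicity counting function by (a multiple of) the characteristic function of $\tilde f^{(p)}$; a bound on that characteristic function, uniform in $p$ and locally in $D$, is essentially the normality you are trying to prove, so the argument is circular --- and there is no second main theorem on a bounded domain $D\subset\C^n$ with error terms uniform over a non-normal family. Second, the intermediate implication itself is not derivable by any local argument: a sequence whose intersection with $H_k$ consists of more and more points, each of multiplicity exactly $m_k$, has empty low-multiplicity locus but unbounded area ignoring multiplicities, so hypothesis (iii) simply does not control the quantity you feed into Theorem \ref{theorem 1}; the unrestricted bound can only emerge \emph{a posteriori} from normality, not be used to establish it.

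The paper's proof avoids this entirely. Conditions (ii) and (iii) are used, via Lemmas \ref{lemma 2}, \ref{lemma 1}, \ref{lemma 2a}, \ref{lemma 1a}, only to extract limit closed sets $S_k$, which together with the degeneracy locus $\widetilde S$ of the resultants form an exceptional set $E$. On a relatively compact neighborhood $U_{z_1}\subset (D-S)-E$ the maps $f^{(p)}$ eventually omit $H_k(f^{(p)})$ for $1\le k\le N+1$ and intersect $H_k(f^{(p)})$ with multiplicity at least $m_k$ for $N+2\le k\le q$, so Theorem \ref{theorem 4} applies after setting $m_k=\infty$ for $k\le N+1$ (your numerical condition is precisely $\sum_{j=1}^q(1-N/m_j)>N+1$). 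Theorem \ref{theorem 4} is proved by a Zalcman-type rescaling (Lemma \ref{lemma 9}) producing a nonconstant entire curve $g:\C\to\P^N(\C)$ that meets the frozen hypersurfaces $Q_j(z_0)$ with multiplicity at least $m_j$, contradicting the Nochka--Picard theorem (Lemma \ref{lemma 10}) transported to $\mathcal S(\{P_i(z_0)\}_{i=0}^N)$ by Lemmas \ref{lemma 11}--\ref{lemma 13}; your auxiliary ``Veronese'' map does appear there, but only applied to the entire curve $g$, where merely \emph{constancy} must be descended through the finite map $(P_0:\cdots:P_N)$ (Lemma \ref{lemma 12}), not convergence of a sequence of meromorphic maps. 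Finally, convergence is pushed across $E$ and then across $S$ by Lemma \ref{lemma 6n}, which uses only the $N+1$ divisor-convergence data coming from (ii). To repair your plan you must replace the second-main-theorem step by this rescaling/Picard mechanism on the complement of the exceptional set, and then handle the exceptional set separately as in Lemma \ref{lemma 6n}.
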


 The following theorem generalizes Theorem 1 of Tu \cite{Tu1} from the case of fixed hyperplanes  to moving hypersurfaces.

\begin{thr}\label{theorem 6}
{\it Let $\mathcal F$ be a family of holomorphic mappings of a domain $D$ in $\C^n$ into $\P^N\big(\C\big).$ Let $q\geqslant 2N + 1$ be a positive integer. Suppose that for each $f \in \mathcal F$, there exist  $N + 1$ moving hypersurfaces 
$T_0\big(f\big),\cdots,T_N\big(f\big)$ in $\P^N\big(\C\big)$ of common degree and there exist $q$ moving hypersurfaces $H_1\big(f\big), \cdots, H_q\big(f\big)$ in $\widetilde{\mathcal S}\big(\{T_i\big(f\big)\}_{i = 0}^N\big)$ such that the following conditions are satisfied:

i) For each $0 \leqslant i \leqslant N,$ the coefficients of the homogeneous polynomials $P_i(f)$ which define the $T_i(f)$  are bounded above uniformly 
 for all $f$ in $\mathcal F$   
on compact subsets of $D$, and for all 
$1 \leqslant j \leqslant q,$
the coefficients $b_{ij}(f)$ of the linear combinations of the 
$P_i(f)$, $i=0,...,N$ which define the
homogeneous polynomials $Q_j(f)$ which define the $H_j(f)$  are bounded above uniformly   for all $f$ in $\mathcal F$ on compact subsets of $D$, 
and for any sequence $\{ f^{(p)}\} \subset \mathcal F$, there exists $z \in D$ (which may depend on the sequence) such that 
$$inf_{p \in \N}\big\{D(Q_1(f^{(p)}),...,Q_q(f^{(p)}))(z) \big\} > 0 \mbox \,.$$

ii)   $f(D)\cap H_i(f)= \emptyset \ (1 \leqslant i \leqslant N+1)$  for any $f \in \mathcal F$.

iii) 
There exists a closed subset $S$ of $D$ with $\Lambda^{2n-1}(S)=0$ 
 such that for any fixed compact subset $K$ of $D - S,$  the $2(n-1)$-dimensional  Lebesgue areas of
$$\{z \in \supp \nu\big(f, H_k(f)\big)\big|\nu\big(f, H_k(f)\big)(z) < m_k \} \cap K \ (N+2\leq k\leq q) $$
ignoring multiplicities for all $f$ in $\mathcal F$ are bounded above, where $\{m_k\}_{k = N+2}^q$ are fixed positive intergers and may be $\infty$ with
\begin{equation*}
\sum_{k =N+2}^q\dfrac{1}{m_k}< \dfrac{q - \big(N+1\big)}{N}.
\end{equation*}

Then $\mathcal F$ is a holomorphically normal family on $D$.}
\end{thr}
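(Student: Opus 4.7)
My plan is to derive Theorem \ref{theorem 6} from Theorem \ref{theorem 5}, using the non-intersection hypothesis $(\mathrm{ii})$ to upgrade meromorphic normality to holomorphic normality. The hypotheses of Theorem \ref{theorem 5} follow immediately: since $f(D)\cap H_i(f)=\emptyset$ for $1\le i\le N+1$, the Lebesgue areas of $f^{-1}(H_i(f))\cap K$ in condition $(\mathrm{ii})$ of Theorem \ref{theorem 5} vanish identically, while conditions $(\mathrm{i})$ and $(\mathrm{iii})$ coincide. Hence $\mathcal{F}$ is meromorphically normal on $D$, so any sequence $\{f^{(p)}\}\subset\mathcal{F}$ admits a meromorphically convergent subsequence with limit a meromorphic map $f\colon D\to\P^N(\C)$. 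The remaining task is to show that $f$ is holomorphic and that the convergence is locally uniform, equivalently that the indeterminacy set $I(f)$ is empty.

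Next, using the uniform coefficient bound in $(\mathrm{i})$ together with Montel's theorem applied to the coefficient functions, I pass to a further subsequence so that $P_i(f^{(p)})\to P_i^{\infty}$ and $Q_j(f^{(p)})\to Q_j^{\infty}$ locally uniformly on $D$. Condition $(\mathrm{i})$ furnishes $z_0\in D$ with $D(Q_1^{\infty},\dots,Q_q^{\infty})(z_0)>0$, so on an open neighborhood $U$ of $z_0$ the limit hypersurfaces $H_j^{\infty}$ are in (weakly) general position, and after shrinking $U$ the $T_i^{\infty}$ are in general position as well.

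On a polydisc $V\Subset U$, take reduced representations $\tilde f^{(p)}\to\tilde f$. Condition $(\mathrm{ii})$ implies that
\[
h_j^{(p)}(z):=Q_j(f^{(p)})\bigl(z,\tilde f^{(p)}(z)\bigr),\qquad 1\le j\le N+1,
\]
is nowhere vanishing on $V$. By Hurwitz's theorem each limit $h_j^{\infty}(z)=Q_j^{\infty}(z,\tilde f(z))$ is either identically zero or nowhere vanishing on $V$. The alternative $h_j^{\infty}\equiv 0$ is ruled out by a Fujimoto-type argument that combines the strict avoidance of $H_1(f^{(p)}),\dots,H_{N+1}(f^{(p)})$ with the remaining $q-(N+1)\ge N$ moving hypersurfaces satisfying the multiplicity condition $(\mathrm{iii})$, in the spirit of Fujimoto's hyperbolicity result for $\P^N(\C)$ minus $2N+1$ hypersurfaces in general position. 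Therefore each $h_j^{\infty}$ is nowhere vanishing on $V$; in particular $\tilde f$ has no zeros on $V$, so $I(f)\cap V=\emptyset$, $f$ is holomorphic on $V$, and $f^{(p)}\to f$ locally uniformly there.

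The main obstacle will be promoting this local conclusion to all of $D$: the meromorphic limit $f$ is defined globally, but $I(f)\cap(D\setminus U)$ could in principle be nonempty. The plan is to run the local argument above at every point of $D$ where the limits $Q_j^{\infty}$ are in general position---a nonempty open subset of $D$ whose complement is a proper analytic subvariety---and then to apply a standard Fujimoto-type removable-singularity lemma for meromorphic maps to extend holomorphic convergence across the thin exceptional locus. The technically delicate core will therefore be the moving-target version of Fujimoto's Hurwitz dichotomy in the third paragraph, which will run in parallel to the analogous step in the proof of Theorem \ref{theorem 5} already developed earlier in the paper.
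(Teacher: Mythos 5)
Your overall architecture---first obtain meromorphic normality from Theorem \ref{theorem 5} (whose hypotheses are indeed trivially satisfied, since $f(D)\cap H_i(f)=\emptyset$ makes the relevant Lebesgue areas vanish), then use the avoidance hypothesis plus Hurwitz's theorem to show the indeterminacy set of the limit is empty---is exactly the paper's route: the paper proves Theorems \ref{theorem 5} and \ref{theorem 6} together and performs the upgrade via Lemma \ref{lemma 6n}~b). However, two steps of your write-up are genuinely gapped. First, your mechanism for ruling out $h_j^{\infty}\equiv 0$ is wrong in both directions: the claim that \emph{each} of $h_1^{\infty},\dots,h_{N+1}^{\infty}$ is nowhere vanishing is more than what follows (the limit map could a priori land inside some individual $H_j^{\infty}$), and a ``Fujimoto-type hyperbolicity argument'' cannot certify non-vanishing of a \emph{specific} $h_j^{\infty}$. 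What is actually needed, and what the paper uses, is only that \emph{some} $h_{j_0}^{\infty}\not\equiv 0$; this follows elementarily because at a generic point $z$ the limits $Q_1^{\infty}(z),\dots,Q_{N+1}^{\infty}(z)$ have no common nontrivial zero while an admissible representation $\tilde f(z)$ is nonzero off a codimension-$2$ set, and one index suffices since $Q_{j_0}^{\infty}(z)$ homogeneous of positive degree and $Q_{j_0}^{\infty}(z)(\tilde f(z))\neq 0$ already forces $\tilde f(z)\neq 0$. (Relatedly, your Montel step on the coefficients silently assumes the degrees of the $Q_j(f^{(p)})$ are bounded; this is the content of Lemma \ref{lemma 4}/Lemma \ref{lemma 5nn} and cannot be skipped.)

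Second, and more seriously, the final globalization step does not exist as stated. There is no ``standard removable-singularity lemma'' asserting that locally uniform convergence of holomorphic maps on the complement of a proper analytic subvariety $A$, together with meromorphic convergence on all of $D$, yields locally uniform convergence on $D$: the limit can acquire indeterminacy points lying inside $A$ (consider $f^{(p)}=(z_1:z_2+1/p)\to(z_1:z_2)$ near the origin in $\C^2$ with $A=\{z_1=0\}$), and this failure mode is precisely what condition~(ii) must be invoked to exclude \emph{at the points of $A$}. The repair is that no extension argument is needed at all: your Hurwitz dichotomy runs on every ball $B(z_1,r)\subset D$, because the non-vanishing of $Q_j(f^{(p)})(\tilde f^{(p)})$ is a global consequence of~(ii), while the input $h_{j_0}^{\infty}\not\equiv 0$ only requires general position of the limits at a single generic point of the ball (the locus of failure being a proper analytic subset of $D$, hence nowhere dense). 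This is exactly how Lemma \ref{lemma 6n}~b) concludes $E=\emptyset$ on an arbitrary $B(z_1,r)$. As written, your proof establishes holomorphic convergence only off a hypersurface and leaves the crossing of that hypersurface unproved.
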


Let us finally give some comments on our proof methods:

The proofs of Theorem \ref{theorem 1} and Theorem 
\ref{theorem 2} are obtained by generalizing ideas, which have been
used by Thai-Mai-Trang \cite{MTT} to prove Theorem A, to moving targets, which presents several highly non-trivial technical difficulties. Among others,
for a sequence of moving targets $H(f^{(p)})$ which at the same time may depend of the meromorphic maps $f^{(p)} : D \rightarrow \P^N\big(\C\big)$, obtaining a subsequence which converges locally uniformly on $D$ is much more difficult than for fixed
targets (among others we cannot normalize the coefficients to have norm equal to 1 everywhere like for fixed targets). This is obtained in
 Lemma \ref{lemma 5n}, after having proved in  Lemma \ref{lemma 4} that  the condition $D(Q_1,...,Q_q) > \delta > 0$ forces a uniform bound, only in terms of $\delta$, on the degrees of the $Q_i$, $1 \leq i \leq q$
 (in fact the latter result fixes also a gap in \cite{MTT} even for the case of fixed targets). 

 The proofs of Theorem \ref{theorem 4}, Theorem \ref{theorem 5} and Theorem \ref{theorem 6} are obtained by combining methods used by
 Tu \cite{Tu1}, \cite{Tu2} and Tu-Li \cite{Tu-Li} with the methods which we developed to prove our first two theorems. 
 However, in order to apply the technics which Tu and Tu-Li used for the case of hyperplanes, we still need that for every
 meromorphic map $f^{(p)} : D \rightarrow \P^N\big(\C\big)$, the 
 $Q_1(f^{(p)}),...,Q_q(f^{(p)})$ are still in a linear system given by 
 $N+1$ such maps
 $P_0(f^{(p)}),..., P_N(f^{(p)})$. The Lemmas \ref{lemma 11} to 
 Lemma \ref{lemma 5nn} adapt our technics to this situation
  (for example Lemma 
 \ref{lemma 5nn} is an adaptation of our Lemma \ref{lemma 5n})

\section{Basic notions.}

\subsection{Meromorphic mappings.} Let A be a non-empty open subset of a domain $D$ in $\C^n$ 
such that $S=D-A$ is an analytic set in $D$. Let $f:A\to \P^N(\C)$
be a holomorphic mapping. Let $U$ be a non-empty connected open subset of $D$.
A holomorphic mapping $\tilde f\not \equiv 0$
from $U$ into $\C^{N+1}$ is said to be a representation of $f$ on $U$
 if $f(z)=\rho(\tilde f(z))$ for all $z\in U\cap A-\tilde f^{-1}(0)$, 
where $\rho:\C^{N+1}-\{0\} \to \P^N(\C)$ is the canonical projection.
A holomorphic mapping $f:A\to \P^N(\C)$ is said to be a meromorphic mapping
from $D$ into $\P^N(\C)$ if for each $z\in D$, there exists 
a representation of $f$ on some neighborhood of $z$ in $D$.

\subsection{Admissible representations. } Let $f$ be a meromorphic mapping
of a domain $D$ in $\C^n$ into $\P^N(\C)$.
Then for any $a\in D$, $f$ always has an admissible representation 
$\tilde f(z)=(f_0(z),f_1(z),\cdots,f_N(z)) $
on some neighborhood $U$ of $a$ in $D,$ which means that each $f_i(z)$ is a holomorphic function on $U$ and 
$f(z)=(f_0(z):f_1(z):\cdots:f_N(z)) $ outside the analytic set $I(f):= \{ z \in U : f_0(z) = f_1(z) =...= f_N(z) = 0\}$ of
codimension $\ge 2.$ 

\subsection{Moving hypersurfaces in general position. } Let $D$ be a domain in $\C^n$. Denote by $\mathcal H_D$ the ring of all holomorphic functions on $D$, and 
$\widetilde{\mathcal H}_D [\omega_0, \cdots, \omega_N]$
the set of all homogeneous polynomials $Q \in \mathcal H_D[\omega_0, \cdots, \omega_N]$ such that the coefficients of  $Q$ are not all identically zero. Each element of $\widetilde{\mathcal H}_D [\omega_0, \cdots, \omega_N]$ is said to be a moving hypersurface in  $\P^N(\C).$  

Let $Q$ be a moving hypersurface of degree  $d \geqslant 1$. Denote by $Q(z)$ the homogeneous polynomial over $\C^{N+1}$ obtained by evaluating   the coefficients of $Q$ in a specific point $z \in D$.  We remark that for generic $z \in D$ this is a non-zero homogenous polynomial with coefficients in $\C$.
The hypersurface $H$ given by $H(z):=\{w\in \C^{N+1}: Q(z)(w)=0\}\ ($for generic $z\in D)$ is also called to be a moving hypersuface in  $\P^N(\C)$ which is defined by $Q.$  In this article, we identify $Q$ with $H$ if no confusion arises.

We say that moving hypersurfaces $\{Q_j\}_{j=1}^q $ of degree $d_j$ $\ (q \geqslant N+1)$  in $\P^N(\C)$
are located in (weakly) general position if there exists  $z \in D$ such that for any  $1 \leqslant j_0 < \cdots < j_N \leqslant q$, the system of equations 
\begin{equation*}
\left \{ \begin{array}{ll}
Q_{j_i} (z) \big(\omega_0, \cdots, \omega_N\big) = 0\\
0 \leqslant i \leqslant N
\end{array} \right.
\end{equation*}
has only the trivial solution $\omega = \big(0, \cdots, 0\big)$ in $\C^{N+1}$. This is equivalent to

$$D(Q_1,...,Q_q)(z):=\prod_{1 \leq j_0 < \cdots < j_N \leq q} \inf_{||\omega|| = 1} \bigg(\big|Q_{j_0}(z)(\omega)\big|^2 + \cdots + \big|Q_{j_N}(z)(\omega)\big|^2\bigg) > 0,$$
where $Q_j(z)(\omega)=\sum_{| I | = d_j} a_{jI}(z).\omega^I$ and $||\omega|| = \big(\sum |\omega_j|^2\big)^{1/2}$.

\subsection{Divisors. }Let $D$ be a domain in $\C^n$ and $f$ a non-identically zero holomorphic function 
on $D$. For a point $a=(a_1,a_2,...,a_n)\in D$ we expand $f$ as a compactly 
convergent series
$$f(u_1+a_1,.....,u_n+a_n)=\sum_{m=0}^{\infty}P_m(u_1,...,u_n)$$
on a neighborhood of $a$, where $P_m$ is either identically zero 
or a homogeneous polynomial of degree $m$. The number 
$$\nu_{f}(a):= \min  \{m; P_m(u)\not\equiv 0\}$$
is said to be the zero multiplicity of $f$ at $a$.
By definition, a divisor on $D$ is an integer-valued function $\nu$
on $D$ such that for every $a\in D$ there are holomorphic
 functions $g(z)(\not\equiv 0)$ and $h(z)(\not\equiv 0)$
on a neighborhood $U$ of $a$
with $\nu(z)=\nu_g(z)-\nu_h(z)$ on $U$.
We define the support of the divisor $\nu$
on $D$ by 
$$\supp \  \nu:=\overline {\{z\in D:\nu(z)\ne 0\}}.$$
We denote ${\mathcal D^+}(D) =\{\nu:$ a non-negative divisor on $D\}$.

Let $f$ be a  meromorphic mapping from a domain  $D$ into $\P^N\big(\C\big)$. For each homogeneous polynomial $Q \in \widetilde{\mathcal H}_D [\omega_0, \cdots, \omega_N]$, we define the divisor $\nu\big(f, Q\big)$ on $D$ as follows: For each $a \in D$, let $\widetilde{f} = \big(f_0,\cdots, f_N\big)$ be an admissible representation of  $f$ in a neighborhood $U$ of $a$. Then we put
\begin{equation*}
\nu\big(f, Q\big)(a) := \nu_{Q(\tilde f)}(a),
\end{equation*}
where $Q(\tilde f) := Q\big(f_0, \cdots, f_N\big).$ 

Let $H$ be a moving hypersurface which is defined by the homogeneous polynomial $Q \in \widetilde{\mathcal H}_D[\omega_0, \cdots, \omega_N]$, and $f$ be a meromorphic mapping of  $D$ into $\P^N\big(\C\big).$
As above we define the divisor
$\nu(f,H) (z):=\nu\big(f, Q\big)(z).$
Obviously, $\supp \nu(f,H)$
is either an empty set or a pure $(n-1)- $dimensional analytic set in $D$ if
$f(D)\not\subset H$ (i.e., $Q(\tilde f)\not\equiv 0$ on $U$).
We define $\nu(f, H)=\infty$ on $D$ 
and $\supp \nu(f,H)=D$
if $f(D)\subset H$.
Sometimes we identify $f^{-1}(H)$ 
with the divisor $\nu(f, H)$ on $D$.
 We can rewrite $\nu(f, H)$ 
as the formal sum
$\nu(f, H)=\sum\limits_{i\in I} n_iX_i$,
where $X_i$
are the irreducible components of $\supp \nu(f, H)$ and
$n_i$ are the constants
$\nu(f, H)(z)$ on $X_i\cap Reg(\supp \nu(f,H)),$ where
$Reg(\ )$ denotes the set of all the regular points.

We say that the meromorphic mapping $f$ intersects $H$ with multiplicity at least $m$ on $D$ if 
$\nu(f, H)(z)\ge m$ for all $z\in \supp  \nu(f, H)$
and in particular that $f$ intersects $H$ with multiplicity 
$\infty$ on $D$ if 
$f(D)\subset H$ or $f(D)\cap H=\emptyset$.

\subsection{Meromorphically normal families. }\label{mero}\ Let $D$ be a domain in $\C^n.$

\noindent
i)\ (See \cite{AK})  Let $\mathcal F$ be a family of holomorphic mappings of $D$  into a compact complex manifold $M$.
$\mathcal F$ is said to be {\it a (holomorphically) normal family on $D$} if any sequence in $\mathcal F$ contains
 a subsequence which converges uniformly on compact
subsets of $D$ to a holomorphic mapping of $D$ into $M$.

\noindent
ii)\ (See \cite{Fu}) A sequence $\{f^{(p)}\}$ of meromorphic mappings from  $D$ into $\P^N(\C)$ is said {\it to converge meromorphically  on $D$ to a 
meromorphic mapping $f$}
if and only if, for any $z\in D$, each $f^{(p)}$ has an admissible representation
$$\tilde f^{(p)}=(f_0^{(p)}:f_1^{(p)}:...:f_N^{(p)})$$
on some fixed neighborhood $U$ of $z$ such that 
$\{f^{(p)}_i\}_{p=1}^{\infty}$
converges uniformly on compact
subsets of $U$ to a holomorphic function $f_i$  $(0\le i \le N)$ on $U$ 
with the property that 
$\tilde f =(f_0:f_1:...:f_N)$
is a representation of $f$ on $U$ (not necessarily an admissible one ! ).

\noindent
iii)\ (See \cite{Fu}) Let $\mathcal F$ be a family of meromorphic mappings of $D$ into $\P^N(\C)$.
$\mathcal F$ is said to be {\it a meromorphically normal family on $D$} if any sequence in $\mathcal F$ 
has a meromorphically convergent subsequence on $D$.

\noindent
iv)\ (See \cite{S1})   Let $\{\nu_i\}$ be a sequence of non-negative divisors on $D$. It is said to converge to a non-negative divisor $\nu$ on $D$ if and only if 
 any $a\in D$ has a neighborhood $U$ such that there exist
 holomorphic functions $h_i (\not\equiv 0)$ and $h(\not\equiv 0)$
on $U$ such that  $\nu_i=\nu_{h_i}$, $\nu=\nu_h$ and $\{h_i\}$
converges compactly to $h$ on $U$.

\noindent 
v)\ (See \cite{S1}) 
Let $\{A_i\}$ be a sequence of closed subsets of $D$. It is said to converge to a closed subset $A$ of $D$ if and only if 
 $A$ coincides with the set of all $z$ such that every neighborhood $U$ of $z$ intersects $A_i$ for all but finitely many $i$ and, simultaneously, with the set of all $z$ such that every $U$ intersects  $A_i$ for  infinitely many $i$.

\subsection{Other notations.} Let $P_0, \cdots, P_N$ be $N + 1$ homogeneous polynomials of common degree in $\C[\omega_0, \cdots, \omega_N].$ Denote by $\mathcal{S}\big(\{P_i\}_{i=0}^N\big)$ the set of all homogeneous polynomials $Q = \sum_{i=0}^N\limits b_i P_i \; (b_i \in \C).$

Let  $\{Q_j:= \sum_{i = 0}^N\limits b_{ji} P_i\}_{j=1}^q$ be $q \; (q \geqslant N+1)$ homogeneous polynomials in $\mathcal{S}\big(\{P_i\}_{i=0}^N\big).$ 
We say that   $\{Q_j\}_{j=1}^q$ are  located in general position in $\mathcal{S}\big(\{P_i\}_{i=0}^N\big)$ if 
\begin{equation*}
\forall \ 1 \leqslant j_0 < \cdots < j_N \leqslant q, \; \det \big(b_{j_k i}\big)_{0 \leqslant k, i \leqslant N} \not= 0.
\end{equation*}

 Let  $T_0, \cdots, T_N$ be hypersurfaces in $\P^N\big(\C\big)$ of common degree, where $T_i$ is defined by  the (not zero) polynomial $P_i \ (0 \leqslant i\leqslant N).$ Denote by 
 $\widetilde{\mathcal{S}}\big(\{T_i\}_{i=0}^N\big)$ the set of all hypersurfaces in $\P^N\big(\C\big)$ which are defined by $Q \in \mathcal{S}\big(\{P_i\}_{i=0}^N\big)$ with $Q$ not zero.

Let $P_0, \cdots, P_N$ be $N + 1$ homogeneous polynomials of common degree in $\widetilde{\mathcal H}_D [\omega_0, \cdots, \omega_N]$. Denote by $\widetilde{\mathcal{S}}\big(\{P_i\}_{i=0}^N\big)$ the set of all homogeneous not identically zero polynomials $Q = \sum_{i=0}^N\limits b_i P_i \; (b_i \in \mathcal H_D).$

Let  $T_0, \cdots, T_N$ be moving hypersurfaces in $\P^N\big(\C\big)$ of common degree, where $T_i$ is defined by  the (not identically zero) polynomial $P_i \ (0 \leqslant i\leqslant N).$ Denote by $\widetilde{\mathcal{S}}\big(\{T_i\}_{i=0}^N\big)$ the set of all moving hypersurfaces in $\P^N\big(\C\big)$ which are defined by $Q \in \widetilde{\mathcal{S}}\big(\{P_i\}_{i=0}^N\big).$

Denote by $Hol(X,Y)$ the set of all holomorphic mappings from a complex space $X$ to a complex space $Y.$

For each $x\in \C^n$ and $R>0$, we set $B(x,R)= \{z\in \C^n : ||z-x||<R\}$ and 
$B(R)=B(0,R).$

 Let $\Lambda^d(S)$ denote the real $d$-dimensional Hausdorff measure of $S\subset \C^n.$ For a formal $\Z$-linear
combination $X= \sum_{i\in I} n_iX_i$ of analytic subsets $X_i \subset \C^n$ and for a subset $E\subset \C^n,$ we 
call $\sum_{i\in I}\Lambda^d(X_i\cap E)$ \ (resp. $\sum_{i\in I}n_i\Lambda^d(X_i\cap E))$, the $d$-dimensional
Lebesgue area of $X\cap E$  {\it ignoring multiplicities} (resp. {\it with counting multiplicities}).

Finally we list some facts on Hausdorff measures for later use which
can for example be found in the book of Chirka \cite{C}.
\begin{lem}\label{HD} (\cite[p.351f. and p.299f.]{C})
Let $B:=B(x,R) \subset \C^n$ and $S \subset B$ a closed subset with
$\Lambda^{2n-1}(S)=0$. Let $h : B \rightarrow \C$ be a holomorphic function, $
h \not\equiv 0$,  and
$S_1:=S \cup \{h=0\}$. Let $z_0 \in B$. Then we have :\\
\indent a) For almost every complex line $l:= \{z_0 +z \cdot u, \: z \in \C \}$
passing through $z_0$, we have $\Lambda^1(S_1 \cap l ) =0$.\\
\indent b) Let $r_0 := {\rm dist} (z_0,  \partial B)$ and, for every $r \in ]0, r_0[$, $\mathcal C_r:= \{ z_0 +r \cdot e^{i\theta}\cdot u : \theta \in [0, 2\pi]\ \} \subset l$. 
If $\Lambda^1(S_1 \cap l ) =0$ then the subset  of the $r$ such that $\mathcal C_r \cap S_1 \not= \emptyset$ is nowhere dense in the interval $]0, r_0[$.\\

\end{lem}

\begin {cor} \label{HDC} With the notations of Lemma \ref{HD} the set $B \setminus S_1$ is pathwise connected.
\end{cor}
\noindent {\it Proof.} This is an immediate consequence of Lemma \ref{HD} and of the fact that $B \setminus S_1 \subset B$ is an open subset. \qed

 \section{ Lemmas.}

\begin{lem}\label{lemma 2} (\cite[Theorem 2.24]{S1})
{\it A sequence $\{\nu_i\}$ of non-negative divisors on a 
domain $D$ in $\C^n$ is normal in the sense of the convergence of divisors on $D$
 if and only if 
the $2(n-1)$-dimensional Lebesgue areas of $\nu_i\cap E$
$(i \ge 1)$
with counting multiplicities are bounded above
for any fixed compact set $E$ of $D$.}
\end{lem}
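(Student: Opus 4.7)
The statement is an equivalence between the divisor-theoretic normality of $\{\nu_i\}$ and uniform boundedness of the Lebesgue masses. The unifying tool is the local Lelong-Poincar\'e identity: if $\nu_i = \nu_{h_i}$ with $h_i$ holomorphic and not identically zero, then as a current of integration $\nu_i$ equals $\frac{1}{\pi}dd^c\log|h_i|$, and the $2(n-1)$-dimensional Lebesgue area with counting multiplicities of $\nu_i\cap E$ equals (up to a universal constant) the mass of this positive closed $(1,1)$-current on $E$. My plan is to handle each direction through this identification.

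For the easy direction (normality $\Rightarrow$ bounded area), suppose $\nu_i \to \nu$. Around each point $a\in D$ choose a neighborhood $U$ with holomorphic $h_i, h\not\equiv 0$ such that $\nu_i=\nu_{h_i}$, $\nu=\nu_h$, and $h_i\to h$ compactly on $U$. Since $h\not\equiv 0$, compact convergence implies $\log|h_i|\to \log|h|$ in $L^1_{\mathrm{loc}}(U)$ (a standard subharmonic fact), and therefore $dd^c\log|h_i|\to dd^c\log|h|$ weakly as currents. In particular the masses on any compact $E\subset U$ are uniformly bounded. A finite cover of an arbitrary compact $E\subset D$ by such neighborhoods yields the required uniform bound.

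For the substantive direction (bounded area $\Rightarrow$ normality), I would run a Bishop-type diagonal argument. Exhaust $D$ by a countable increasing family of relatively compact polydiscs $U_\alpha\Subset D$, and on each $U_\alpha$ write $\nu_i|_{U_\alpha}=\nu_{h_{i,\alpha}}$ with $h_{i,\alpha}$ holomorphic. Fix a slightly larger polydisc $U_\alpha'$, normalize $h_{i,\alpha}$ so that a suitable $L^2$-norm on $U_\alpha'$ equals $1$ (this is the crucial normalization step), and use the hypothesis together with a Skoda/Lelong-type potential estimate to dominate $\log^+|h_{i,\alpha}|$ on $U_\alpha$ by a constant depending only on the uniform area bound and on the normalization. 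Montel's theorem then produces a subsequence converging uniformly on $U_\alpha$ to some $h_\alpha$, and the $L^2$-normalization forces $h_\alpha\not\equiv 0$. A standard Cantor-style diagonal extraction over $\alpha$ gives one subsequence convergent simultaneously on every $U_\alpha$; compatibility of the limiting divisors $\nu_{h_\alpha}$ on overlaps (which holds because the divisors $\nu_i$ are globally defined on $D$) yields a global non-negative divisor $\nu$ with $\nu_i\to \nu$.

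The main obstacle is the backward direction, and concretely two points inside it: (1) choosing the right local normalization of the $h_{i,\alpha}$ so that the uniform area bound translates into a uniform sup-norm bound on a smaller polydisc, and (2) ruling out that the subsequential limit $h_\alpha$ is identically zero (which would produce a "divisor" with infinite multiplicity everywhere and destroy convergence). Both points are resolved by the $L^2$-normalization together with the Skoda integrability estimate for plurisubharmonic functions with bounded Monge-Amp\`ere-like mass; this is the heart of Stoll's argument, and would be what I expect to take up the bulk of a complete write-up.
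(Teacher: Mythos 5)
This lemma is not proved in the paper at all: it is quoted verbatim from Stoll \cite[Theorem 2.24]{S1}, so there is no internal proof to compare against. Your sketch is therefore measured against Stoll's original argument, which proceeds quite differently (via canonical Weierstrass-type representations of divisors on polydiscs, with the area bound controlling the degree of the pseudopolynomials, and a Bishop-style compactness for their coefficients), whereas you propose the modern route through the Lelong--Poincar\'e identity and compactness of plurisubharmonic functions. The forward direction of your sketch is essentially sound, with two small caveats: ``normal'' must be read as ``every subsequence has a convergent sub-subsequence'' (otherwise the implication normality $\Rightarrow$ bounded areas is false), so the argument should be run on an arbitrary subsequence; and the fact that compact convergence $h_i\to h$ with $h\not\equiv 0$ forces $\log|h_i|\to\log|h|$ in $L^1_{\mathrm{loc}}$ is itself a (standard but nontrivial) lemma about plurisubharmonic functions that you are invoking silently.

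The genuine gap is in the backward direction, precisely at the normalization step you flag as ``crucial.'' As written, you normalize $\|h_{i,\alpha}\|_{L^2(U_\alpha')}=1$ on the larger polydisc, derive a uniform sup bound only on the smaller $U_\alpha$, and extract a limit $h_\alpha$ on $U_\alpha$. Nothing then prevents $h_\alpha\equiv 0$ on $U_\alpha$: the unit $L^2$-mass of the $h_{i,\alpha}$ can escape into the collar $U_\alpha'\setminus U_\alpha$, where you have no uniform control and no convergence. Conversely, if you normalize on the small set $U_\alpha$ you lose the integral upper bound on the larger set that your harmonic-majorant/potential estimate needs as input. Repairing this requires a genuine three-radii argument (normalize on $U_\alpha$, bound the Riesz potential of the divisor using the area hypothesis on an intermediate set, and propagate the sup bound back to a neighborhood of $\overline{U_\alpha}$ so that the normalization survives the limit), and this is exactly the technical heart of the theorem rather than a routine detail. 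The gluing of the local limits $\nu_{h_\alpha}$ across overlaps also deserves a sentence (the quotients $h_{i,\alpha}/h_{i,\beta}$ are nonvanishing holomorphic functions, and Hurwitz's theorem applied to them and their reciprocals shows the limit divisors agree), but that part of your outline is correct in spirit.
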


\begin{lem}\label{lemma 1} (\cite[Theorem 4.10]{S1})
{\it If a sequence $\{\nu_i\}$ converges to $\nu$ in  ${\mathcal D^+}(B(R))$, then $\{supp \  \nu_i\}$ converges to $supp \  \nu$ (in the sense of closed subsets of $D$)}.
\end{lem}

\begin{lem}\label{lemma 2a} (\cite[Proposition 4.12]{S1})
{\it Let $\{N_i\}$ be a sequence of pure $(n-1)$-dimensional analytic subsets of a domain $D$ in $\C^n$. If the $2(n-1)$-dimensional Lebesgue areas of $ N_i \cap K $ ignoring multiplicities $(i = 1, 2, ...)$ are bounded above for any fixed compact subset $K$ of $D$, then $\{N_i\}$ is normal in the sense of the convergence of closed subsets in $D$.}
\end{lem}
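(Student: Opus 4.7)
The plan is to reduce the claim about convergence of closed sets to the already-established Lemma \ref{lemma 2} and Lemma \ref{lemma 1} on convergence of divisors and their supports. The key device is to associate to each pure $(n-1)$-dimensional analytic set $N_i$ its reduced divisor $\nu_i \in {\mathcal D^+}(D)$: locally at any $a \in D$, if the irreducible branches of $N_i$ at $a$ are cut out by reduced local holomorphic equations $g_1, \dots, g_r$, set $\nu_i := \nu_{g_1 \cdots g_r}$ on a neighborhood of $a$. This produces a well-defined non-negative divisor whose support coincides with $N_i$, and in which every irreducible component appears with multiplicity exactly $1$.

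With $\nu_i$ so defined, the $2(n-1)$-dimensional Lebesgue area of $\nu_i \cap K$ with counting multiplicities equals the area of $N_i \cap K$ ignoring multiplicities, since the multiplicities are all equal to $1$ by construction. By the hypothesis of the lemma, the latter quantity is uniformly bounded above for each fixed compact $K \subset D$. Hence Lemma \ref{lemma 2} applies and yields a subsequence $\{\nu_{i_k}\}$ that converges in ${\mathcal D^+}(D)$ to some non-negative divisor $\nu$.

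Finally, applying Lemma \ref{lemma 1} to the convergent sequence $\{\nu_{i_k}\}$, we obtain that $\supp \nu_{i_k} = N_{i_k}$ converges to $\supp \nu$ in the sense of closed subsets of $D$. Since $\supp \nu$ is closed in $D$, this exhibits a convergent subsequence of the original sequence $\{N_i\}$, proving the asserted normality. The only real technical point, and thus the main obstacle I expect, lies in the construction of $\nu_i$ in the first paragraph: one must verify that the reduced divisor is genuinely a divisor in the local sense of the excerpt. This is standard, since each irreducible branch of a pure codimension-one analytic set admits a reduced local holomorphic defining function, and the product of these realizes $\nu_i$ locally as $\nu_g$ for a single holomorphic $g$; one should also check that different choices of local $g$ yield the same integer-valued function, which follows from the uniqueness of the reduced primary decomposition in the local ring of holomorphic functions at $a$.
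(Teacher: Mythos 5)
Your argument is correct, but it is worth noting that the paper gives no proof of this statement at all: Lemma \ref{lemma 2a} is simply quoted as Proposition 4.12 of Stoll's paper \cite{S1}, just as Lemmas \ref{lemma 2} and \ref{lemma 1} are quoted as Theorem 2.24 and Theorem 4.10 of the same reference. What you have done is derive the quoted Proposition 4.12 from the other two quoted results, which is a legitimate and rather clean reduction. The key step --- replacing each pure $(n-1)$-dimensional analytic set $N_i$ by its reduced divisor $\nu_i$, locally $\nu_{g}$ for a square-free local defining function $g$ of $N_i$ --- is sound: such a $g$ exists because a pure codimension-one analytic subset is locally principal, it is unique up to a unit, and the resulting divisor has multiplicity $1$ on every irreducible component, so in the paper's notation $\nu_i=\sum_j 1\cdot X_j$ and the area of $\nu_i\cap K$ \emph{with} counting multiplicities coincides with the area of $N_i\cap K$ \emph{ignoring} multiplicities. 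Lemma \ref{lemma 2} then yields a subsequence converging in $\mathcal D^+(D)$, and Lemma \ref{lemma 1} converts divisor convergence into convergence of supports as closed subsets. Two small points you should make explicit: (a) Lemma \ref{lemma 1} is stated on a ball $B(R)$, so you should apply it on an exhaustion of $D$ by balls and use that convergence of closed subsets is a local notion; (b) the value $\nu_{g}(a)$ at a \emph{singular} point $a$ of $N_i$ may exceed $1$ (e.g.\ a cusp), but this is harmless because the paper's definition of area with counting multiplicities only uses the generic multiplicity $n_j$ along each component $X_j$, which is $1$ here. Neither point is a gap, only a matter of presentation.
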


\begin{lem}\label{lemma 1a} (\cite[Proposition 4.11]{S1})
{\it Let $\{N_i\}$ be a sequence of pure $(n-1)$-dimensional analytic subsets of a domain $D$ in  $\C^n$. Assume that the $2(n-1)$-dimensional Lebesgue areas of $N_i \cap K$ ignoring multiplicities $(i = 1, 2, \cdots)$ are bounded above for any fixed compact subset $K$ of $D$ and $\{N_i\}$ converges to $N$ as a sequence of closed subsets of $D$. Then $N$ is either empty or a pure $(n-1)$-dimensional analytic subset of $D$.}
\end{lem}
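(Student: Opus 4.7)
The plan is to recast the question in the language of positive closed currents of integration and then appeal to the classical regularity theory for such currents.

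First, to each $N_i$ I associate its current of integration $[N_i]$, a positive closed current of bidimension $(n-1,n-1)$ on $D$. Since each $N_i$ is a reduced pure $(n-1)$-dimensional analytic subset, Wirtinger's formula identifies the mass of $[N_i]$ on any compact $K\subset D$ (up to a dimensional constant) with the $2(n-1)$-dimensional Lebesgue area of $N_i\cap K$ ignoring multiplicities, which by hypothesis is locally bounded in $i$. The Banach--Alaoglu theorem in the space of currents then allows one to extract a subsequence (still indexed by $i$) such that $[N_i]\to T$ weakly, where $T$ is a positive closed current of bidimension $(n-1,n-1)$ on $D$.

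Next I identify $\supp T$ with the given closed set $N$. If $z\notin N$, then by the characterisation of convergence of closed subsets ($N$ is the set of $z$ such that every neighbourhood meets $N_i$ for infinitely many $i$, compare Lemma \ref{lemma 1}) $z$ has an open neighbourhood $U$ meeting only finitely many of the $N_i$, so $[N_i]|_U=0$ for large $i$ and $T|_U=0$, hence $z\notin\supp T$. Conversely, for $z\in N$ every neighbourhood of $z$ meets infinitely many $N_i$; the Lelong monotonicity formula yields a uniform lower bound $c\,r^{2(n-1)}$ for the mass of $[N_i]$ on $\overline{B(z,r)}$ whenever $N_i\cap B(z,r/2)\neq\emptyset$, where $c$ depends only on $n$. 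Upper semicontinuity of mass on compact sets under weak convergence then gives $T(\overline{B(z,r)})\ge c\,r^{2(n-1)}$ for every $r>0$, so $z\in\supp T$.

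Finally, I invoke the Harvey--Shiffman structure theorem (equivalently Bishop's theorem for positive closed currents): any positive closed current of bidimension $(n-1,n-1)$ on $D$ whose support has locally finite $2(n-1)$-dimensional Hausdorff measure has support that is either empty or a pure $(n-1)$-dimensional analytic subset of $D$. The required Hausdorff-measure bound on $N=\supp T$ is inherited from the area bounds on the $N_i$ via the upper semicontinuity of Hausdorff measure under Hausdorff convergence of closed sets. The main technical obstacle is the reverse inclusion $N\subseteq\supp T$ in the previous paragraph, which converts the set-theoretic characterisation of $N$ into a lower bound on the mass of $T$ via Lelong's monotonicity formula; once that is established, pure-dimensional analyticity of $N$ is a direct application of the regularity theory for positive closed currents to $T$.
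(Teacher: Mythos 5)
The paper gives no proof of this lemma at all; it is quoted verbatim from Stoll (Proposition 4.11 of \cite{S1}), whose argument --- like Bishop's contemporaneous theorem on limits of analytic sets --- works directly with the analytic sets via local projections and counting of sheets. Your route through positive closed currents (pass to a weak limit $T$ of the integration currents $[N_i]$ using the local area bound, identify $\supp T$ with $N$ by running Lelong's monotonicity inequality in both directions, then invoke the Harvey--Shiffman structure theorem) is therefore a genuinely different, standard modern proof, and its architecture is sound. Passing to a subsequence is harmless because $N$ is prescribed in advance as the limit of the full sequence of closed sets, and both inclusions $\supp T\subseteq N$ and $N\subseteq\supp T$ are correctly argued; in particular the lower bound $T(\overline{B(z,r)})\ge c\,r^{2(n-1)}$ for $z\in N$ is obtained legitimately from the Portmanteau inequality for closed sets.

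One step, however, fails as justified: the local finiteness of $\mathcal{H}^{2(n-1)}(N)$, which the structure theorem requires as a hypothesis, does \emph{not} follow from ``upper semicontinuity of Hausdorff measure under Hausdorff convergence of closed sets'' --- no such semicontinuity holds (a sequence of finite sets, each of zero $\mathcal{H}^{1}$-measure, converges locally in the Hausdorff sense to a segment of positive length). The correct derivation uses exactly the density bound you have already proved: for every $z\in N$ and every small $r$ one has $T(\overline{B(z,r)})\ge c\,r^{2(n-1)}$ with $c=c(n)>0$. Cover $N\cap K$ by the balls $\overline{B(z,r)}$, $z\in N\cap K$, and extract a Besicovitch subfamily whose overlap is bounded by a dimensional constant; since each ball of the subfamily carries mass at least $c\,r^{2(n-1)}$ and the total mass of $T$ on a slightly larger compact set $K'$ is finite, the subfamily has at most $C(n)\,T(K')/(c\,r^{2(n-1)})$ members, so the $\mathcal{H}^{2(n-1)}$-premeasure of $N\cap K$ at scale $2r$ is bounded by $C'(n)\,T(K')/c$ uniformly in $r$; letting $r\to 0$ gives $\mathcal{H}^{2(n-1)}(N\cap K)<\infty$. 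With this replacement the proof closes. The point is not pedantic: a general positive closed current of bidimension $(n-1,n-1)$ (for instance $dd^{c}u$ for a generic plurisubharmonic $u$) can have support of full dimension, so some quantitative input beyond weak convergence --- here, the uniform Lelong-type density bound on $\supp T$ --- is indispensable before the structure theorem can be applied.
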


\begin{lem}\label{lemma 4} 
{\it Let natural numbers $N$ and $q \geqslant N+1$ be fixed. Then for each $\delta > 0,$  there exists $M(\delta)= M(\delta, N,q) > 0$ such that the following is satisfied:

For any homogeneous polynomials $Q_1, \cdots, Q_q $  on $\C^{N+1}$ with complex coefficients with norms bounded above by $1$ such that $D\big(Q_1, \cdots, Q_q\big) > \delta,$
we have $\max\{\deg Q_1, \cdots, \deg Q_q\} < M(\delta).$}
\end{lem}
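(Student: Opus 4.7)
I argue by contradiction: suppose there is a sequence $(Q_1^{(k)},\ldots,Q_q^{(k)})_k$ with all coefficients of modulus at most $1$ and $D(Q^{(k)})>\delta$, yet $\max_j \deg Q_j^{(k)}\to\infty$. The first task is to convert $D>\delta$ into a uniform positive lower bound on each individual factor of the product defining $D$. Using the standard orthogonality formula $\int_{S^{2N+1}}|\omega^I|^2 d\sigma = \frac{N!\,i_0!\cdots i_N!}{(N+|I|)!}\, V$ (with $V = \operatorname{Vol}(S^{2N+1})$), together with the elementary inequality $i_0!\cdots i_N!\leq |I|!$, one obtains $\int_S |Q|^2\, d\sigma \leq V$ for every homogeneous $Q$, of any degree, whose coefficients have absolute value at most $1$. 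Since an infimum is dominated by an average, each factor of $D$ is then at most $N+1$, and combining with $D>\delta$ gives the uniform lower bound
$$\inf_{\|\omega\|=1}\sum_{k=0}^N |Q_{j_k}^{(k)}(\omega)|^2 \;\geq\; \eta := \frac{\delta}{(N+1)^{\binom{q}{N+1}-1}}$$
for every $(N+1)$-subset $\{j_0,\ldots,j_N\}$.

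Next, after passing to a subsequence, I split $\{1,\ldots,q\}=U\sqcup B$ with $\deg Q_j^{(k)}\to\infty$ for $j\in U$ and $\deg Q_j^{(k)}$ eventually equal to a constant $d_j\geq 1$ for $j\in B$. Compactness of the coefficient ball in each fixed degree allows a further subsequence with $Q_j^{(k)}\to \widetilde Q_j$ for every $j\in B$; by assumption $U\neq\emptyset$. The crucial estimate is that the polynomials of unbounded degree vanish uniformly away from the coordinate circles of the sphere: for any fixed $r\in(0,1)$ and any $j\in U$, on the compact set $K_r:=\{\omega\in S:\max_i|\omega_i|\leq r\}$ one has $|Q_j^{(k)}(\omega)|\leq \binom{N+\deg Q_j^{(k)}}{N}\, r^{\deg Q_j^{(k)}}\to 0$ as $k\to\infty$, since each monomial is bounded by $r^{\deg Q_j^{(k)}}$ on $K_r$.

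Finally, pick any $(N+1)$-subset $T$ with $T\cap U\neq\emptyset$. Discarding the vanishing contributions from $j\in T\cap U$ and passing to the limit $k\to\infty$, the bound from the first paragraph yields $\sum_{j\in T\cap B}|\widetilde Q_j(\omega)|^2\geq \eta$ on $K_r$; since $r<1$ is arbitrary and $\bigcup_{r<1}K_r$ is dense in $S$, continuity of polynomials propagates this inequality to all of $S$. The main obstacle is to rule out this conclusion: note that $|T\cap B|\leq N$, while each nonzero $\widetilde Q_j$ is still a homogeneous polynomial of its original positive degree $d_j\geq 1$. By the projective dimension theorem, any family of at most $N$ nonzero homogeneous polynomials of positive degree on $\mathbb{C}^{N+1}$ has a nonempty common zero locus in $\mathbb{P}^N$, so $\sum_{j\in T\cap B}|\widetilde Q_j|^2$ must vanish somewhere on $S$, contradicting $\eta>0$. (The degenerate cases $T\cap B=\emptyset$ or all $\widetilde Q_j\equiv 0$ yield the contradiction directly.) Hence $U=\emptyset$, contrary to the standing assumption, and the lemma follows.
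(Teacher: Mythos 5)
Your proof is correct, and while the overall strategy (contradiction, splitting indices into bounded and unbounded degree, exploiting that a degree-$d$ polynomial with coefficients of modulus at most $1$ is $O(\mathrm{const}\cdot r^d)$ on $\{\omega\in S:\max_i|\omega_i|\le r\}$, and finally contradicting the fact that at most $N$ hypersurfaces in $\P^N$ must share a common zero) is the same as the paper's, you execute two key steps differently, and in both cases more cleanly. First, to reduce $D(Q_1,\dots,Q_q)>\delta$ to a lower bound $\eta$ on each individual factor, you use the spherical orthogonality integral to show $\inf_{\|\omega\|=1}\sum_k|Q_{j_k}(\omega)|^2\le N+1$; the paper instead evaluates at the single point $(1/\sqrt{N+1},\dots,1/\sqrt{N+1})$ and bounds $\sup_d (d+1)^{N+1}(N+1)^{-d/2}$. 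Both work. Second, and more substantively, the paper first locates a unit-norm common zero $\omega^{(0)}$ of the limiting bounded-degree polynomials and must then distinguish whether $\max_i|\omega_i^{(0)}|<1$ or $=1$; the latter case requires the delicate perturbation $\omega_0^{(j)}=1-1/\sqrt{d^{(j)}}$ and the auxiliary limit $(x^2+1)^{N+1}(1-1/x)^{x^2}\to 0$ (the paper's remark (iii) and Subcase 1.2). You avoid this case distinction entirely by first establishing $\sum_{j\in T\cap B}|\widetilde Q_j|^2\ge\eta$ on the dense open set $\{\omega\in S:\max_i|\omega_i|<1\}$ and extending by continuity to all of $S$, after which the common zero produces the contradiction wherever it sits. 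One caveat you share with the paper: both arguments silently assume all degrees are at least $1$ (your ``$d_j\ge 1$''); a degree-zero homogeneous polynomial equal to a nonzero constant would make the statement false as literally written (e.g.\ $Q_1\equiv 1$, $Q_2=\omega_0^d$), but this is consistent with the paper's standing convention that moving hypersurfaces have degree $d\ge 1$.
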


\begin{proof} 
First of all, we make the  three  following remarks.

\noindent
$i)$\, Let $Q(\omega)$ be a homogeneous polynomial on $\C^{N+1}$ such that
$$Q(\omega) = \sum_{|\alpha|=d} \limits a_\alpha \omega^\alpha,$$ 
where $ |a_\alpha| \leq 1.$ Then
$$|Q(\omega)|\leq\sum_{|\alpha|=d}\limits |\omega_0|^{\alpha_0}\cdots |\omega_N|^{\alpha_N}\leq(d+1)^{N+1}r^{d},$$
when $|\omega_k|\leq r \ \forall \ 0\leqslant k\leqslant N.$

We set
$$M_0=\sup_{d\in \mathbb Z^+}(d+1)^{N+1}\left (\dfrac{1}{\sqrt {N+1}}\right )^{d}.$$
Since $\lim\limits_{d\longrightarrow +\infty}(d+1)^{N+1}\left (\dfrac{1}{\sqrt {N+1}}\right )^{d}=0$, it implies that
$M_0<+\infty.$
 
\noindent
$ii)$\, Let $Q_0,\cdots,Q_N$ be homogeneous polynomials on $\C^{N+1}$ such that the norms of their complex coefficients 
are bounded above by $1$ and $D(Q_0,\cdots,Q_N)>0$. We choose $\omega^{(0)}=(1/\sqrt{N+1},\cdots, 1/\sqrt{N+1})\in\C^{N+1}.$ Then $||\omega^{(0)}||=1$. By (i), we have
\begin{center}
$D(Q_0,\cdots,Q_N)\leq (N+1)M_0^{2}<+\infty .$
\end{center}

\noindent
$iii)$  Since $\lim_{x \rightarrow \infty}\limits (1-\frac{1}{x})^x=\frac{1}{e}$, we have $\frac{e(1-\frac{1}{x})^x}{2} < 1$ for $x$ big enough. Therefore, $$\lim_{x \rightarrow \infty}\limits (x^2+1)^{N+1}(1-\frac{1}{x})^{x^2}=\lim_{x \rightarrow \infty}\limits \left( \frac{e(1-\frac{1}{x})^x}{2} \right)^{x}\frac{(x^2+1)^{N+1}}{(\frac{e}{2})^{x}}=0.$$
We now come back to the proof of Lemma \ref{lemma 4}, and we consider the following two cases. \\
{\bf Case 1:} \,$q=N+1$.\\
Assume that such a constant $M(\delta)=M(\delta, N,N+1)$ does not exist.  Then there exist homogeneous polynomials $Q^{(j)}_0,..,Q^{(j)}_N$ $(j\geqslant 1)$ with coefficients being bounded above by $1$ such that
\begin{center}
$\inf \left\{D(Q^{(j)}_0,..,Q^{(j)}_N):\ j\geq1\right\}>\delta>0,$\\
$\lim_{j\rightarrow\infty}\limits\left(\max \left\{\deg Q^{(j)}_0,\cdots,\deg Q^{(j)}_N\right\}\right)=\infty$.\\
\end{center}
Without loss of generality we may assume that
\begin{center}
$\deg Q^{(j)}_i=d_i\ \forall \ 0\leq i\leq k, \ \forall \ j\geq1,$ and\\
$\deg Q^{(j)}_i=d^{(j)}_i\rightarrow\infty$ as $j\rightarrow\infty$ for each $k+1\leq i\leq N,$\\
\end{center}
where $k$ is some integer such that $0\leq k\leq N-1$. \\
Since $\deg Q^{(j)}_i=d_i\ \forall \ 0\leq i\leq k, \ \forall j\geq1,$ we may assume that, for each $0\leq i\leq k,$ $\left\{Q^{(j)}_i\right\}_{j\geq1}$ converges uniformly on compact subsets of $\C^{N+1}$ to either a homogeneous polynomial $Q_i$ of degree $d_i$ with coefficients being bounded above by $1$ or to the zero polynomial.
Since $0\leq k\leq N-1$, we have
\begin{center}
$\bigcap^k_{i=0}\limits \left\{H_i:=Zero(Q_i)\right\}\neq\emptyset.$\\
\end{center}
Hence, there exists $\omega^{(0)}\in\bigcap^k_{i=0}\limits H_i$ with $||\omega^{(0)}||=1.$ We now consider two subcases.

\noindent
{\bf Subcase 1.1.}\  Assume that  $r=\max\left\{|\omega^{(0)}_0|,..,|\omega^{(0)}_N|\right\}<1.$ \\
$+)$ \,If $0\leq i\leq k,$ then
\begin{center}
$\lim_{j\rightarrow\infty}\limits Q^{(j)}_i(\omega^{(0)})=0.$\\
\end{center}
$+)$\, If $k+1\leq i \leq N,$ then, by remark $i),$ we have
\begin{center}
$|Q^{(j)}_i(\omega^{(0)})|\leq (d^{(j)}_i+1)^{N+1}r^{d^{(j)}_i}.$\\
\end{center}
Since $\lim_{j\rightarrow\infty}\limits d^{(j)}_i=\infty$ and $r<1$, it implies that
\begin{center}
$\lim_{j\rightarrow\infty}\limits Q^{(j)}_i(\omega^{(0)})=0$ for $k+1\leq i \leq N.$\\
\end{center}
Therefore, we get
\begin{center}
$\lim_{j\rightarrow\infty}\limits D(Q^{(j)}_0,..,Q^{(j)}_N)\leq \lim_{j\rightarrow\infty}\limits \sum^N_{i=0}\limits |Q^{(j)}_i(\omega^{(0)})|^2=0.$\\
\end{center}
This is a contradiction.

\noindent
{\bf Subcase 1.2.}\  Assume that $\max\left\{|\omega^{(0)}_0|,..,|\omega^{(0)}_N|\right\}=1.$\\
We may assume that $\omega^{(0)}=(1,0,\cdots,0)$. Set $\left\{\omega^{(j)}\right\}_{j\geq 1}$ such that
\begin{center}
$\omega^{(j)}_0=1-\frac{1}{\sqrt{d^{(j)}}}$ , $\omega^{(j)}_1=\cdots=\omega^{(j)}_N=\frac{1}{\sqrt N}\sqrt{\frac{2}{\sqrt {d^{(j)}}}-\frac{1}{d^{(j)}}},$\\
\end{center}
where $d^{(j)}=\min_{k+1\leq i \leq N}\limits d^{(j)}_i \,.$\\
$+)$ \,If $0\leq i\leq k,$ then
\begin{center}
$\lim_{j\rightarrow\infty}\limits Q^{(j)}_i(\omega^{(j)})=Q_i(\omega^{(0)})=0.$\\
\end{center}
$+)$ \ Suppose that $k+1\leq i \leq N.$ \\
Since $\lim_{j\rightarrow\infty}\limits d^{(j)}=\infty,$  there exists $j_0$ such that:
\begin{center}
$\max\left\{|\omega^{(j)}_0|,..,|\omega^{(j)}_N|\right\}=|\omega^{(j)}_0|=1-\frac{1}{\sqrt{d^{(j)}}}=r_j \mbox { for any } j>j_0.$  \\
\end{center}
By remark $i)$ and $iii)$, for each  $k+1\leq i \leq N,$ we have
\begin{align*}
|Q^{(j)}_i(\omega^{(j)})|&\leq (d^{(j)}_i+1)^{N+1}(1-\frac{1}{\sqrt{d^{(j)}}})^{d^{(j)}_i}\\
&\leq(d^{(j)}_i+1)^{N+1}(1-\frac{1}{\sqrt{d_i^{(j)}}})^{d^{(j)}_i}\rightarrow0 \mbox { as } j\rightarrow\infty.
\end{align*}
This is a contradiction by the same argument as above. 

\noindent
{\bf Case 2:} \,$q>N+1$.\\
By remark $ii)$ we have

$\delta< D(Q_1,..,Q_q)=\prod_{1\leq j_0<..<j_N\leq q}\limits D(Q_{j_0},..,Q_{j_N})$
$\leq CD(Q_{j_0},..,Q_{j_N})$\\
for any set $\left\{j_0,..,j_N\right\}\subset\left\{1,..,q\right\},$ where $C$ is a constant which depends only on $N$ and $q.$\\
By Case $1$, we have
\begin{center}
$\max\left\{\deg Q_{j_0},\cdots,\deg Q_{j_N}\right\}<M(\delta / C, N, N+1)$\\
\end{center}
for any set $\left\{j_0,..,j_N\right\}\subset\left\{1,..,q\right\}.$
So if we  define $$M(\delta, N, q):=M(\delta / C, N, N+1)$$ (this is well defined since
$C$ only depends on $N$ and $q$), then we have 
\begin{center}
$\max\left\{\deg Q_{1},\cdots,\deg Q_{q}\right\}<M(\delta, N, q)$.\\
\end{center}
\end{proof}

\begin{lem}\label{lemma 5n}
{\it Let natural numbers $N$ and $q \geqslant N+1$ be fixed.
Let $H_k^{(p)}\ (1 \leqslant k \leqslant q, \ p \geqslant 1)$ be moving hypersurfaces in $\P^N\big(\C\big)$ such that the following conditions are satisfied:

i) For each $1 \leqslant k \leqslant q, \ p \geqslant 1,$ the coefficients of the homogeneous polynomials $Q_k^{(p)}$ which define the $H_k^{(p)}$  are bounded above uniformly for all $p \geq 1$ on compact subsets of $D$,

ii) there exists $z_0 \in D$ such that 
$$inf_{p \in \N}\big\{D(Q_1^{(p)},...,Q_{q}^{(p)})(z_0) \big\} >  \delta > 0 \mbox \,.$$
Then, we have:

a) There exists a subsequence $\{j_p\}\subset \N$ such that for 
$1 \leqslant k \leqslant q$, $Q_k^{(j_p)}$ converge uniformly on compact subsets of $D$ to not identically zero homogenous polynomials
$Q_k$ (meaning that the $Q_k^{(j_p)}$ and $Q_k$ are homogenous polynomials in
$\widetilde{\mathcal H}_D[\omega_0, \cdots, \omega_N]$ of the same degree, and all their coefficients converge uniformly on compact subsets of $D$). 
Moreover we have that
$D\big(Q_1, \cdots, Q_{q}\big)(z_0) > \delta  > 0$,
 the hypersurfaces $Q_1(z_0), \cdots, Q_{q}(z_0)$ are located in general position and the moving hypersurfaces
 $Q_1(z), \cdots, Q_{q}(z)$ are located in (weakly) general position.

b) There exists a subsequence $\{j_p\}\subset \N$ and $r = r(\delta) > 0$  such that 
\begin{equation*}
\inf\{D\big(Q_1^{(j_p)}, \cdots, Q_{q}^{(j_p)}\big)(z) \big| p \geqslant 1\} > \dfrac{\delta}{4}, \ \forall z \in B(z_0, r).
\end{equation*}}
\end{lem}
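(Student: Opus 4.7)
The plan is to uniformly bound the degrees of the $Q_k^{(p)}$ (using Lemma \ref{lemma 4}), then to extract by Montel a subsequence converging coefficient-wise on all of $D$, and finally to promote positivity of $D$ at $z_0$ to uniform positivity on a small ball around $z_0$.

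First I would use hypothesis (i) to pick a constant $C \geqslant 1$ bounding all coefficients of all $Q_k^{(p)}$ evaluated at $z_0$. Rescaling $\tilde Q_k^{(p)} := Q_k^{(p)}/C$ makes the coefficients of $\tilde Q_k^{(p)}(z_0)$ of modulus at most $1$; a direct scaling gives
\[
D\big(\tilde Q_1^{(p)},\ldots,\tilde Q_q^{(p)}\big)(z_0) \;=\; C^{-2\binom{q}{N+1}}\, D\big(Q_1^{(p)},\ldots,Q_q^{(p)}\big)(z_0) \;>\; \delta_0 \;>\; 0,
\]
where $\delta_0 := \delta\, C^{-2\binom{q}{N+1}}$. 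Lemma \ref{lemma 4} then yields $\deg Q_k^{(p)} = \deg \tilde Q_k^{(p)} < M(\delta_0,N,q)$ uniformly in $k,p$.

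With degrees bounded, I pass to a subsequence along which $\deg Q_k^{(p)} = d_k$ is independent of $p$. Since by hypothesis (i) all coefficients of the $Q_k^{(p)}$ form a finite set of holomorphic functions on $D$ uniformly bounded on compact subsets, Montel's theorem and a diagonal extraction yield a subsequence $\{j_p\}$ along which every coefficient converges locally uniformly on $D$ to a holomorphic limit; let $Q_k \in \widetilde{\mathcal H}_D[\omega_0,\ldots,\omega_N]$ be the resulting degree-$d_k$ polynomials. The convergence of coefficients together with the fixed degrees imply that $(z,\omega)\mapsto |Q_k^{(j_p)}(z)(\omega)|^2$ converges uniformly on $K \times \{\|\omega\|=1\}$ for compact $K\subset D$; taking infima over $\|\omega\|=1$ and then the finite product over $(N+1)$-subsets of $\{1,\ldots,q\}$ preserves uniform convergence, so $D(Q_1^{(j_p)},\ldots,Q_q^{(j_p)})\to D(Q_1,\ldots,Q_q)$ uniformly on compact subsets of $D$. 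Evaluating at $z_0$ gives $D(Q_1,\ldots,Q_q)(z_0) \geqslant \delta > 0$, whence the general position at $z_0$ and weakly general position on $D$; moreover, no $Q_k$ can be identically zero, since otherwise one could pick a common nontrivial zero $\omega$ of the remaining $N$ polynomials in some $(N+1)$-tuple containing $k$ (such $\omega$ exists since $N$ homogeneous polynomials on $\C^{N+1}$ always have a nontrivial common zero), making the corresponding factor in $D(Q_1,\ldots,Q_q)(z_0)$ vanish, a contradiction.

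For part (b), continuity of $z \mapsto D(Q_1,\ldots,Q_q)(z)$ yields $r>0$ with $\overline{B(z_0,r)} \subset D$ and $D(Q_1,\ldots,Q_q)(z) \geqslant \delta/2$ on $B(z_0,r)$; the uniform convergence on $\overline{B(z_0,r)}$ from the previous step then lets me choose $p_0$ such that $|D(Q_1^{(j_p)},\ldots,Q_q^{(j_p)})(z) - D(Q_1,\ldots,Q_q)(z)| < \delta/4$ there for all $p \geqslant p_0$, and discarding the first $p_0-1$ indices of the subsequence yields the bound $\delta/4$. The main obstacle is the degree bound in Step 1: Lemma \ref{lemma 4} requires coefficients of modulus at most $1$, so one must first rescale by $C$, and the critical point is that the resulting lower bound $\delta_0$ depends only on $\delta$ and the compact-set bound $C$ on coefficients, which is exactly what is needed for the degree bound to hold uniformly over the whole sequence.
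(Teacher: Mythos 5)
Your proof is correct and follows essentially the same route as the paper's: rescale so the coefficients at $z_0$ have modulus at most $1$, invoke Lemma \ref{lemma 4} to bound the degrees uniformly, extract a subsequence with constant degrees and locally uniformly convergent coefficients via Montel and a diagonal argument, and then use continuity of $z\mapsto D(Q_1,\cdots,Q_q)(z)$ together with locally uniform convergence for part (b). The only (harmless) discrepancy is that at $z_0$ you conclude $D(Q_1,\cdots,Q_q)(z_0)\geqslant\delta$ rather than the stated strict inequality, which in any case follows because the infimum over $p$ is strictly larger than $\delta$; everything downstream works with either bound.
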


\begin{proof} 
Let  $d_k^{(p)}=\deg Q_k^{(p)}$ be the degree of the non identically vanishing homogenous polynomial $Q_k^{(p)} \,(1 \leqslant k \leqslant q, p \geqslant 1)$. Then we have
 \begin{equation*}
Q_k^{(p)}(z)(\omega) = \sum_{| I | = d_k^{(p)}} a_{kpI}(z).\omega^I, 
\end{equation*}
where $I = (i_1, .., i_{N+1}), |I|= i_1+\cdots+i_{N+1}$ and $a_{kpI}(z)$ are holomorphic functions which are bounded above uniformly  for all $p \geq 1$ on compact subsets of $D$. 
Since the coefficients of the polynomials $Q_k^{(j_p)}$ are bounded above uniformly for all $p \geq 1$ on compact subsets of $D$, there exists $c >0$ such that $|a_{kpI}(z_0)| \leq c$ for all $k,p,I$. Define 
homogenous polynomials 
 \begin{equation*}
 \tilde Q_k^{(p)}(z)(\omega)   :=         \frac{1}{c}  Q_k^{(p)}(z) (\omega)
  \end{equation*}
  Then the $ \tilde Q_k^{(p)}(z)(\omega)$ satisfy the condition 
  \begin{equation}\label{n1}
  inf_{p \in \N}\big\{D(\tilde Q_1^{(p)},...,\tilde Q_{q}^{(p)})(z_0) \big\} > \tilde \delta > 0 \mbox \,,
  \end{equation}
  with $\tilde \delta :=(\frac{1}{c})^{2 
  \left( 
  \begin{array}{c}
  q \\
  N+1
  \end{array}
   \right)
  } \delta$.
  By Lemma \ref{lemma 4}, we have 
  $$\max\{\deg \tilde Q_1^{(p)}(z_0), \cdots, \deg \tilde Q_q^{(p)}(z_0)\} < M(\tilde \delta).$$
  Since by equation (\ref{n1}) none of the homogenous polynomials 
  $Q_k^{(p)}(z_0) \,(1 \leqslant k \leqslant q, p \geqslant 1)$ can be
 the zero polynomial, we get that 
 $$\max\{\deg \tilde Q_1^{(p)}(z), \cdots, \deg \tilde Q_q^{(p)}(z)\} < M(\tilde \delta)$$
 for all $z \in D$.
 So if again 
  \begin{equation*}
\tilde Q_k^{(p)}(z)(\omega) = \sum_{| I | = d_k^{(p)}} \tilde a_{kpI}(z).\omega^I, 
\end{equation*}
after passing to a subsequence $\{j_p\}\subset \N$ (which we denote
for simplicity again by $\{p\}\subset \N$), we can assume that 
$d_k^{(p)}= d_k$ for $1 \leq k \leq q$. So if we still multiply by $c$, we get
  \begin{equation*}
Q_k^{(p)}(z)(\omega) = \sum_{| I | = d_k}  a_{kpI}(z).\omega^I. 
\end{equation*}
Now, since the $a_{kpI}(z) $ are locally bounded uniformly  for all $p \geq 1$  on $D$,
by using Montel's theorem and a standard diagonal argument with respect to an exaustion of $D$ with compact subsets, 
after passing to a subsequence $\{j_p\}\subset \N$ (which we denote
for simplicity again by $\{p\}\subset \N$),
we also can assume that $\{ a_{kpI}(z)\}_{p=1}^\infty$ converges uniformly on compact subsets of $D$ to $ a_{kI}$ for each $k, I.$  Denote by
\begin{equation*}
 Q_k(z)(\omega) = \sum_{| I | =d_k}  a_{kI}(z).\omega^I.
\end{equation*}
Then 
\begin{equation}\label{n2}
D\big(Q_1, \cdots, Q_{q}\big)(z_0)  \geqslant \liminf_{p \longrightarrow \infty}\limits D\big(Q_1^{(p)}, \cdots, Q_{q}^{(p)}\big)(z_0)> \delta  > 0,
\end{equation}
hence,  the hypersurfaces $Q_1(z_0), \cdots, Q_{q}(z_0)$ are located in general position and so the moving hypersurfaces
 $Q_1(z), \cdots, Q_{q}(z)$ are located in (weakly) general position
 (and in particular all the $Q_1(z),...,Q_q(z)$ are not identically zero),
 which proves a).
 
 Moreover, by equation (\ref{n2}), 
  there exists $r = r(\delta)$ such that 
$$D\big(Q_1, \cdots, Q_{q}\big)(z) > \dfrac{\delta}{2}, \ \forall z \in B(z_0, r).$$
Since $\{Q_k^{(p)}\}$ converges uniformly on compact subsets of $D$ to $Q_k,$  after shrinking $r$ a bit if necessary, there exists $M$ such that 
$$D\big(Q_1^{(p)}, \cdots, Q_{q}^{(p)}\big)(z) > \dfrac{\delta}{4}, \ \forall z \in B(z_0, r), \ p > M,$$
 which proves b).
\end{proof}

\begin{lem}\label{lemma 6n} 
{\it Let $\{f^{(p)}\}$
be a sequence of meromorphic mappings of 
 a domain $D$ in
$\C^n$ into $\P^N(\C)$ and let $S$ be a  closed subset of $D$ with $\Lambda^{2n-1}(S)=0$.
Suppose that $\{f^{(p)}\}$ meromorphically converges on $D-S$ to a meromorphic mapping 
$f$ of $D-S$ into $\P^N(\C)$.
Suppose that, for each $f^{(p)}$, there exist $N+1$
moving hypersurfaces $H_1(f^{(p)}),\cdots,H_{N+1}(f^{(p)})$ in 
$\P^N(\C),$ where  the moving hypersurfaces $H_i(f^{(p)})$ may depend on $f^{(p)},$
such that the following three conditions are satisfied:

i) For each $1 \leqslant k \leqslant N+1,$ the coefficients of homogeneous polynomial $Q_k(f^{(p)})$ which define $H_k(f^{(p)})$ for all $f^{(p)}$  are bounded above uniformly  for all $p \geq 1$ on compact subsets of $D$.

ii) \ There exists  $z_0 \in D$ such that \\
$$\inf\{D\big(Q_1(f^{(p)}), \cdots, Q_{N+1}(f^{(p)})\big)(z_0) \big| p \geqslant 1\} > 0.$$

iii)\ The  $2(n-1)$-dimensional Lebesgue areas of $\big(f^{(p)}\big)^{-1}\big(H_k(f^{(p)})\big) \cap K$ $(1 \leqslant k \leqslant N+1, \ p \geqslant 1)$ counting multiplicities are bounded above
 for any fixed compact subset $K$ of $D$.
 
Then we have:

a) $\{f^{(p)}\}$ has a meromorphically convergent subsequence on $D.$

b) If, moreover, $\{f^{(p)}\}$
is a sequence of holomorphic mappings 
of a domain $D$ in $\C^n$ into $\P^N(\C)$
and condition iii) is sharpened to
$$f^{(p)}(D) \cap H_k(f^{(p)}) = \emptyset \ (1\leq  k\leq N+1, \ p\geq 1),$$
then $\{f^{(p)}\}$ has a subsequence which converges uniformly on compact subsets of $D$ to a holomorphic mapping of $D$ to $\P^N\big(\C\big).$ }

\end{lem}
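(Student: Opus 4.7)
The plan is to extract successively finer subsequences, establish meromorphic convergence on a neighborhood of every point of $D$, and patch. First I would apply Lemma~\ref{lemma 5n} with $q = N+1$: after subsequencing, the polynomials $Q_k^{(p)} := Q_k(f^{(p)})$ have $p$-independent degrees $d_k$ and converge uniformly on compact subsets of $D$ to not-identically-zero $Q_k \in \widetilde{\mathcal H}_D[\omega_0,\cdots,\omega_N]$; moreover there exist $r > 0$ and $\delta > 0$ with $D(Q_1^{(p)},\cdots,Q_{N+1}^{(p)})(z) > \delta/4$ for all $z \in B(z_0,r)$ and large $p$, and the $Q_k$ are in weakly general position on $D$. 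Set $W := \{z \in D : D(Q_1,\cdots,Q_{N+1})(z) > 0\}$; then $W$ is open dense, with analytic complement. A further subsequence via Lemma~\ref{lemma 2} applied to condition~(iii) yields convergent divisors $\nu_k^{(p)} := \nu(f^{(p)},H_k(f^{(p)})) \to \nu_k$ on $D$ for $1 \leqslant k \leqslant N+1$.

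The heart of the proof is to establish meromorphic convergence of $\{f^{(p)}\}$ on a neighborhood of each $a \in S \cap W$ (points of $D - S$ being handled by hypothesis, and the remaining exceptional set discussed below). Setting $d := d_1\cdots d_{N+1}$ and $\tilde Q_k := Q_k^{d/d_k}$, the $\tilde Q_k(z)$ have common degree $d$ and no common nontrivial zero in $\C^{N+1}$ for $z \in W$; thus they induce a holomorphically-varying finite morphism $\Phi(z) : \P^N(\C) \to \P^N(\C)$, and likewise $\Phi^{(p)}(z)$ for large $p$. Choose an admissible representation $\tilde f^{(p)} = (f_0^{(p)},\cdots,f_N^{(p)})$ of $f^{(p)}$ on a neighborhood $U \subset W$ of $a$; the holomorphic functions $g_k^{(p)} := Q_k^{(p)}(\tilde f^{(p)})$ satisfy $\nu_{g_k^{(p)}} = \nu_k^{(p)}|_U \to \nu_k|_U$, and the tuple $(g_k^{(p),d/d_k})_{k=1}^{N+1}$ represents $\Phi^{(p)} \circ f^{(p)}$ on $U$. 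Using the representability of convergent divisors by convergent holomorphic functions (from the definition preceding Lemma~\ref{lemma 1}), together with the given meromorphic convergence on $U - S$, I would normalize each $\tilde f^{(p)}$ by a nowhere-vanishing scalar-valued holomorphic function so that the $g_k^{(p)}$ converge uniformly on compacts of $U$ to holomorphic functions $g_k$ with $\nu_{g_k} = \nu_k|_U$. The properness of $\Phi$ then gives a uniform local bound on $\lVert \tilde f^{(p)}\rVert$ over compacts of $U$, so by Montel's theorem, a further subsequence of $\tilde f^{(p)}$ converges uniformly on compacts of $U$ to a holomorphic $\tilde f$ whose projectivisation extends $f$ meromorphically across $S \cap U$.

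To conclude part~(a), meromorphic convergence is now established on $W$ and on $D - S$; the only points left are in $(D \setminus W) \cap S$. Since $D \setminus W$ is a proper analytic subset of $D$ and $\Lambda^{2n-1}(S) = 0$, a standard removability argument for meromorphic convergence into a projective target (in the spirit of Fujimoto--Stoll) closes the gap. For part~(b), the additional hypothesis $f^{(p)}(D) \cap H_k(f^{(p)}) = \emptyset$ forces each $g_k^{(p)}$ to be nowhere-vanishing, so $\nu_k \equiv 0$; the normalization then yields nowhere-vanishing holomorphic limits $g_k$, and the lifting through $\Phi$ preserves holomorphicity, producing uniform convergence of $f^{(p)}$ on compact subsets of $D$ to a holomorphic map.

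The main obstacle I expect is the normalization step in the second paragraph: rescaling each admissible representation $\tilde f^{(p)}$ by a holomorphic scalar $\lambda^{(p)}$ rescales $g_k^{(p)}$ by $\lambda^{(p),d_k}$, so a single scalar must produce the desired convergence of $g_k^{(p)}$ for all $k$ simultaneously, and it must be chosen coherently with the meromorphic convergence already given on $D - S$. The fact that the $d_k$ need not coincide (forcing the power trick $\tilde Q_k = Q_k^{d/d_k}$) and the fact that $\Phi$ fails to be finite on $D \setminus W$ are the two principal technical complications, the latter being the reason the closing removability argument, rather than the direct finite-morphism lifting, is needed on the exceptional set.
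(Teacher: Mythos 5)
Your opening moves coincide with the paper's (Lemma \ref{lemma 5n} to stabilize the degrees and get locally uniform convergence $Q_k^{(p)}\to Q_k$ in weakly general position; Lemma \ref{lemma 2} to get convergent divisors), and your idea of lifting through the finite morphism $\Phi=(\tilde Q_1:\cdots:\tilde Q_{N+1})$ to convert properness into a Montel bound on $||\tilde f^{(p)}||$ is an attractive alternative ending. But there is a genuine gap exactly where you flag "the main obstacle I expect": the construction of a single nowhere-vanishing scalar $\lambda^{(p)}$ on a full neighborhood of a point of $S$ making all the $g_k^{(p)}=Q_k^{(p)}(\tilde f^{(p)})$ converge simultaneously. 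This is not a technicality to be expected away --- it is the entire content of the paper's proof. The paper sidesteps the simultaneity problem by using only \emph{one} index $k_0$ with $Q_{k_0}(f)\not\equiv 0$ on $D-S$ (whose existence it first proves from weakly general position): writing $Q_{k_0}^{(p)}(\tilde f^{(p)})=v^{(p)}h^{(p)}$ with $h^{(p)}\to h$ the Stoll representatives of the convergent divisors and $v^{(p)}$ nonvanishing on the ball, it extracts a $d$-th root $k^{(p)}$ of $v^{(p)}$ and must then confront the fact that $(u_z^{(p)}k^{(p)}/F^z)^d\to 1$ only pins down $u_z^{(p)}k^{(p)}/F^z$ up to a $d$-th root of unity, which may differ along different subsequences and at different points $z$. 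Resolving this requires the partition $\N=\sqcup_j N_j^z$ into root-of-unity classes and a careful gluing argument (finiteness of symmetric differences $N_j^a\Delta N_{\alpha(j)}^b$ on overlaps, propagated along paths in the connected set $B(z_1,r)-(S\cup\{h=0\})$) before one even has a globally defined normalization $\tilde f_i^{(p)}=\theta_j f_i^{(p)}/k^{(p)}$ to which the maximum-principle extension across $S\cup\{h=0\}$ can be applied. Your version is strictly harder: with $N+1$ polynomials of possibly distinct degrees $d_k$ you would need $(\lambda^{(p)})^{d_k}v_k^{(p)}$ to converge for every $k$ with one $\lambda^{(p)}$, a compatibility that does not follow from the divisor convergence alone and for which you give no mechanism.

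A second, smaller gap is your treatment of $(D\setminus W)\cap S$ by "a standard removability argument". Meromorphic convergence does not extend across a thin set without additional input; the statement you would need (convergence on the complement of a set that is both of $\Lambda^{2n-1}$-measure zero and contained in an analytic hypersurface, plus convergence of the divisors $\nu(f^{(p)},H_{k_0}^{(p)})$, implies convergence everywhere) is essentially the Corollary that the paper \emph{deduces from} this very lemma, so invoking it here is circular unless you re-run the single-hypersurface normalization argument near those points --- which is again the paper's proof. The paper avoids introducing the degeneracy set $W$ altogether precisely because it works with one $Q_{k_0}$ whose nonvanishing factor $v^{(p)}$ is defined on the whole ball, and it absorbs the bad analytic set $\{h=0\}$ into $S$ before applying the maximum principle on circles in well-chosen complex lines. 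Your part (b) is in the right spirit, but the paper obtains it more directly by Hurwitz applied to $Q_{k_0}^{(p)}(\tilde f^{(p)})\to Q_{k_0}(\tilde f)\not\equiv 0$.
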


\begin{proof}  
By Lemma \ref{lemma 5n} and conditions i) and ii),
after passing to a  subsequence, we may assume that for 
$1 \leqslant k \leqslant N+1$, $Q_k(f^{(p)})$ converge uniformly on compact subsets of $D$ to
$Q_k$, in particular they have common degree $d_k$. Moreover, $Q_1,..., Q_{N+1}$ are located in
(weakly) general position. Denote by $H_1,...,H_{N+1}$ the corresponding
moving hypersurfaces.

By Lemma \ref{lemma 2} and condition iii), after passing to a subsequence,
we may assume that for every $1 \leq k \leq N+1$, the divisors 
$$\{\nu(f^{(p)},H_k(f^{(p)}))\} = \big(f^{(p)}\big)^{-1}\big(H_k(f^{(p)})\big)\:(p \geqslant 1)$$ are convergent (in the sense of convergence of divisors in $D$). 

By a standard diagonal argument we may assume that 
$D=B(R)$, and that
$\{f^{(p)}\}$ meromorphically converges on $B(R)-S$ to a meromorphic mapping
$f : B(R)-S \to \P^N(\C)$.

We prove that there exists $k_0 \in \{1,...,N+1\}$ such that 
$f(D - S) \not\subset H_{k_0}$, more precisely that for any
representation $f=(f_0:...:f_N)$ of $f : D - S \to \P^N(\C)$ (admissible or not) we have $Q_{k_0}(f_0,...,f_N) \not\equiv 0$:\\
$E= \{ z \in D : f_0(z) = f_1(z) =...= f_N(z) = 0\}$ is a proper analytic subset. Since $Q_1,..., Q_{N+1}$ are located in
(weakly) general position,
 there exists  $z \in D$ such that  the system of equations 
\begin{equation*}
\left \{ \begin{array}{ll}
Q_{k} (z) \big(\omega_0, \cdots, \omega_N\big) = 0\\
1 \leqslant k \leqslant N+1
\end{array} \right.
\end{equation*}
has only the trivial solution $\omega = \big(0, \cdots, 0\big)$ in $\C^{N+1}$.
 But since then the same is true for the generic point $z \in D$ it is true in
 particular for the generic point $z \in D - E$. So for such point $z$ there
 exists some $k \in \{1,...,N+1\}$ such that  $Q_{k}(z)(f_0(z),...,f_N(z)) \not= 0$. In order to simplify notations, from now on we put:
 $$ Q^{(p)}:= Q_{k_0}(f^{(p)}), \: Q:=Q_{k_0},\: H^{(p)}:= H_{k_0}(f^{(p)}),\:
 H:=H_{k_0}, \:d:=d_{k_0}.$$
 
 Let $z_1$ be any point of $S$.
By \cite{S1} Theorem  3.6, for any $r$ $(0<r<\tilde R = R-||z_1||)$, we can choose holomorphic functions 
$h^{(p)}$ $\not\equiv 0$ and 
$h$ $\not\equiv 0$ on $B(z_1,r)$ such that $\nu(f^{(p)},H^{(p)})=\nu_{h^{(p)}},\nu=\nu_h$
for the limit $\nu$ of 
$\{\nu(f^{(p)},H^{(p)})\}$ and
$\{h^{(p)}\}$ converges uniformly on compact subsets of $B(z_1,r)$ to $h$.
Moreover, each $f^{(p)}$ has an admissible representation on $B(z_1,r)$
$$f^{(p)}=(f_0^{(p)}:f_1^{(p)}:...:f_N^{(p)})$$
with suitable holomorphic functions $f_i^{(p)}$ $(0\le i\le N)$ on $B(z_1,r)$.

Let $z$ be a point in $B(z_1,r)-(S \cup \{h=0\})$.
Choose a simply connected relatively compact neighborhood $W_z$ 
of $z$ in $B(z_1,r)-(S \cup \{h=0\})$ such that
there exists a sequence $\{u_z^{(p)}\}$ of nonvanishing holomorphic functions on $W_z$
such that $\{u_z^{(p)}f_i^{(p)}\}\to f_i^z \ (0\le i \le N) \text { on } W_z $
and $f=(f_0^z:f_1^z:...:f_N^z)$ on $W_z.$ It may be assumed that 
$h^{(p)}$ $(p \ge 1)$ has no zero on $W_z$.
We have $Q^{(p)}(f_0^{(p)},...,f_N^{(p)})=v^{(p)}h^{(p)},$
where $v^{(p)}$ is a nonvanishing holomorphic function on $B(z_1,r).$ 
This implies that $Q^{(p)}(u_z^{(p)}f_0^{(p)},...,u_z^{(p)}f_N^{(p)})\ne 0$
on $W_z$,
since $Q^{(p)}$ is a homogeneous polynomial, and we have
$$Q^{(p)}(u_z^{(p)}f_0^{(p)},...,u_z^{(p)}f_N^{(p)})\to Q(f_0^z,...,f_N^z)$$
on $W_z$ since $Q^{(p)}$ converge uniformly on compact subsets of $D$ to $Q$.
Since $f(D-S)\not\subset H,$ it implies that $Q(f_0^z,...,f_N^z)\not\equiv 0$ on $W_z$, and
hence $Q(f_0^z,...,f_N^z)\ne 0$ on $W_z$.

We recall that the $Q^{(p)}$, $p \geq 1$, and $Q$ have common degree $d$. Since 
$$Q^{(p)}(u_z^{(p)}f_0^{(p)},...,u_z^{(p)}f_N^{(p)}) \text { tends to } Q(f_0^z,...,f_N^z)\text { on } W_z \text { and }$$ $$Q^{(p)}(u_z^{(p)}f_0^{(p)},...,u_z^{(p)}f_N^{(p)}) =(u_z^{(p)})^d\cdot v^{(p)}\cdot h^{(p)},$$
it follows that $(u_z^{(p)})^d\cdot v^{(p)}\cdot h^{(p)}$ tends to $ Q(f_0^z,...,f_N^z)\text { on } W_z$.
Since $v^{(p)}\ne 0$ on $B(z_1,r)$,  $v^{(p)}=(k^{(p)})^d$,
where $k^{(p)}$ is a nonvanishing holomorphic function  on $B(z_1,r)$.
We have
$$(u_z^{(p)})^d\cdot (k^{(p)})^d=(u_z^{(p)}\cdot k^{(p)})^d
\to \dfrac{Q(f_0^z,...,f_N^z)}{h}\text{ on }W_z.$$

Define $F^z$ such that
$$(F^z)^d:=\dfrac{Q(f_0^z,...,f_N^z)}{h}\text { on } W_z.$$

We can do this because $\dfrac{Q(f_0^z,...,f_N^z)}{h}\ne 0$ on $W_z$.
So $(u_z^{(p)}\cdot k^{(p)})^d\to (F^z)^d$
on $W_z$, hence $(\dfrac{u_z^{(p)}\cdot k^{(p)}}{F^z})^d$
tends to $1$ on $W_z$.
Therefore, there exist infinite (or empty) subsets $\{N_j^z\}_{j=0}^{d-1}$ of $\N$ such that
$$\N \ \text { is a disjoint union of sets } N_j^z \ \text { and }$$
$$\{\dfrac {u_z^{(p)}\cdot k^{(p)}}{F^z} \}_{p\in N_j^z} \to \theta _j = e^{i \cdot {\frac {2\pi j}{d}}}\text { for each }
0\le j\le d-1.$$

This implies that $\{\dfrac {f_i^{(p)}}{k^{(p)}}\}_{p\in N_j^z} \to \dfrac {F_i^z}{\theta_j}$ on $W_z$, where
$F_i^z= \dfrac {f_i^z}{F^z}$ on $W_z.$

Take $a \in B(z_1,r)-(S \cup \{h=0\})$. Then $\{\dfrac {f_i^{(p)}}{k^{(p)}}\}_{p\in N_j^a} \to \dfrac {F_i^a}{\theta_j}$ on $W_a$
for each $0 \le j \le d-1$. 

Take $b \in B(z_1,r)-(S \cup \{h=0\})$ such that $W_a \cap W_b \ne \emptyset.$
We will prove that $\{\dfrac {f_i^{(p)}}{k^{(p)}}\}_{p\in N_j^a} \to {\dfrac {F_i^b}{\theta_j}}\cdot c$
for each $ 0\le j\le d-1.$ 
Indeed, without loss of generality we may assume that $f_0^a \not\equiv 0$ on $W_a$.
Then $f_0^x \not\equiv 0$ on $W_x$ for each $x \in B(z_1,r)-(S \cup \{h=0\}).$ Hence $F_0^x \not\equiv 0$ on $W_x$ for each 
$x \in B(z_1,r)-(S \cup \{h=0\}).$

Consider $|N_j^a|=\infty, $ where $|.|$ denotes the cardinality of a set.

Assume that there exist $N_1^b, N_2^b$ such that 
for $\tilde N := N_j^a \cap N_1^b$ and  $\tilde{\tilde N} := N_j^a \cap N_2^b$
we have 
$|\tilde N | = |\tilde{\tilde N} |=
\infty$.
Since $\{\dfrac {f_0^{(p)}}{k^{(p)}}\}_{p\in \tilde N \subset N_1^b} \to \dfrac {F_0^b}{\theta_1}$ on $W_b$
and $\{\dfrac {f_0^{(p)}}{k^{(p)}}\}_{p\in \tilde N \subset N_j^a} \to \dfrac {F_0^a}{\theta_j}$ on $W_a$, we have
$\dfrac {F_0^b}{\theta_1}= \dfrac {F_0^a}{\theta_j}$ on $W_a \cap W_b.$ Similarly, $\dfrac {F_0^b}{\theta_2}= \dfrac {F_0^a}{\theta_j}$ on $W_a \cap W_b.$ This is a contradiction. 
Thus every infinite subset $N_j^a$ intersects and  
only intersects infinitely with the subset $N_{\alpha(j)}^b$. Moreover, $|N_j^a\Delta N_{\alpha(j)}^b| < \infty,$ where $A\Delta B = (A - B)\cup (B - A)$ for arbitrary sets $A,B.$

From this it follows that there exists a bijection $\alpha : \{0,1,...,d-1\}\to \{0,1,...,d-1\}$ such that
$$N_j^a = \emptyset \text { if and only if } N_{\alpha(j)}^b = \emptyset,$$
$$\text { if } |N_j^a| =\infty \text { then } |N_j^a\Delta N_{\alpha(j)}^b| < \infty.$$

On the other hand, since $\{\dfrac {f_0^{(p)}}{k^{(p)}}\}_{p\in N_j^a \cap N_{\alpha(j)}^b} \to \dfrac {F_0^a}{\theta_j}$ 
on $W_a$
and\\  
$\{\dfrac {f_0^{(p)}}{k^{(p)}}\}_{p\in N_j^a \cap N_{\alpha(j)}^b}\to \dfrac {F_0^b}{\theta_{\alpha(j)}}$ on $W_b$, 
we have
$\dfrac {F_0^a}{\theta_j}= \dfrac {F_0^b}{\theta_{\alpha(j)}}$ on $W_a \cap W_b.$ 
This means that $F_0^a=F_0^b \cdot \dfrac {\theta_j}{\theta_{\alpha(j)}}$ on $W_a \cap W_b$ for each $0\le j \le d-1,$
and hence,  $c_b:=\dfrac {\theta_j}{\theta _ {\alpha(j)}} $ is a constant independant of $j$, $0\le j \le d-1.$
It implies that $\{\dfrac {f_i^{(p)}}{k^{(p)}}\}_{p\in N_j^a \cap N_{\alpha(j)}^b} \to \dfrac {F_i^b}{\theta_{\alpha(j)}}=
\dfrac {F_i^b}{\theta_j}\cdot c_b$ on $W_b,$ and hence,
$$\{\dfrac {f_i^{(p)}}{k^{(p)}}\}_{p\in N_j^a } \to \dfrac {F_i^b}{\theta_j}\cdot c_b \mbox{ on } W_b.$$

Applying this procedure a finite number of times, we have $$\{\dfrac {f_i^{(p)}}{k^{(p)}}\}_{p\in N_j^a } \to \dfrac {F_i^x}{\theta_j}\cdot c_x$$ on 
$W_x$ for each $x \in B(z_1,r)-(S \cup \{h=0\})$ and for each $0\le j \le d-1\,.$
Indeed, by the assumption on the Hausdorff dimension of $S$
and by Corollary \ref{HDC}, the open set  
 $B(z_1,r)-(S \cup \{h=0\})$ is pathwise connected, and such a path between $a$ and $x$, which is compact as the image of a closed interval under a continuous map,  can be covered by a finite number of such neighborhoods $W_y$ with 
 $y \in B(z_1,r)-(S \cup \{h=0\})$. And since the limit is unique if it exists, it does not
 depend on the choice of the path.
For $p\in N_j^a$ put $\tilde f_i^{(p)} = f_i^{(p)}\cdot  \dfrac {\theta_j}{k^{(p)}}\ (0\le i \le N).$
Then $f^{(p)}= (\tilde f_0^{(p)},...,\tilde f_N^{(p)})$ for all $p \in N_j^a$ and $0\le j \le d-1$ and 
$\{\tilde f_i^{(p)}\}_{p=1}^\infty\to F_i^x\cdot c_x$ on $ W_x$ for each $0\le i \le N.$
Note that if $W_x \cap W_y \ne\emptyset \ (x,y\in B(z_1,r)-(S \cup \{h=0\}))$ then $ F_i^x\cdot c_x=F_i^y\cdot c_y$ for each $0\le i \le N.$

Define the function $F_i : B(z_1,r)-(S \cup \{h=0\}) \to \C$ given by $F_i|_{W_z}=F_i^z\cdot c_z.$ Then $\{\tilde f_i^{(p)}\}_{p=1}^\infty \to F_i$
on $B(z_1,r)-(S \cup \{h=0\})$ for each $0\le i \le N.$

We now prove that the sequence $\{f^{(p)}\}_{p=1}^\infty$ meromorphically converges on  $B(z_1,r)$  to  some meromorphic mapping $\tilde F = (\tilde F_0,...,\tilde F_N). $ 
Indeed, let $z^{(0)}$ be any point of $ S_1 = S \cup \{h=0\}.$  Since 
 $\Lambda^{2n-1}(S_1)=0$, by Lemma~\ref{HD} a) there exists a complex line $l_{z^{(0)}}$ passing through $z^{(0)}$ such that $\Lambda^1(S_1 \cap l_{z^{(0)}})=0$. Put $ l_{z^{(0)}}= \{z^{(0)}+z\cdot u: z \in \C\}$. Then by Lemma~\ref{HD} b) there exists $R>0$ such that 
$$\mathcal C_0 = \{z^{(0)} +R\cdot e^{i\theta}\cdot u : \theta \in [0, 2\pi]\ \}$$
satisfying $\mathcal C_0 \subset B(z_1,r)$ and $\mathcal C_0 \cap S_1 = \emptyset$.
By the maximum principle, it implies that the sequence $\{\tilde f_i^{(p)}(z^{(0)})\}$ converges. Put
$\lim_{p\to \infty}\tilde f_i^{(p)}(z^{(0)})=\tilde  F_i(z^{(0)}).$ This means that the mapping $F_i$ extends over $B(z_1,r)$ to the mapping $\tilde F_i.$

We now prove that the sequence $\{\tilde f_i^{(p)}(z)\}_{p=1}^\infty $ converges uniformly on compact subsets of $B(z_1,r)$ to $\tilde F_i(z).$
Indeed, assume that $\{z^{(j)}\}\subset B(z_1,r)$ converges to $z^{(0)} \in B(z_1,r)$. As above, there exists a circle
$\mathcal C_0 = \{z^{(0)} +R\cdot e^{i\theta}\cdot u : \theta \in [0,2\pi] \ \}\subset B(z_1,r)$
such that $\mathcal C_0 \cap S_1 = \emptyset$. Since $\mathcal C_0$ is a compact subset of $B(z_1,r)-S_1,$
there exists $\epsilon_0 >0$ such that 
$$V(\mathcal C_0,\epsilon_0)=\{z\in \C^n : dist(z,\mathcal C_0)<\epsilon_0 \} \Subset B(z_1,r)-S_1.$$
Consider the circles $\mathcal C_j = \{z^{(j)} +R\cdot e^{i\theta}\cdot u : \theta \in [0,2\pi] \ \}.$
It is easy to see that $dist(\mathcal C_0,\mathcal C_j) = ||z^{(j)}-z^{(0)}|| \to 0$ as $j\to \infty.$
Thus, without loss of generality, we may assume that $\mathcal C_j \subset  V(\mathcal C_0,\epsilon_0) \Subset B(z_1,r)-S_1.$
By the hypothesis, $\forall \epsilon >0, \exists N= N(\epsilon)$ such that 
$$\sup \{ ||\tilde f^{(p)}_i(z)-F_i(z)|| : z \in V(\mathcal C_0,\epsilon_0), \ p\ge N\} <\epsilon .$$
By the maximum principle, we have
$\limsup_{j \to \infty}||\tilde f^{(j)}_i(z^{(j)})-F_i(z^{(j)})|| =0.$
This implies that the sequence $\{\tilde f_i^{(p)}\}_{p=1}^\infty $ converges uniformly on compact subsets of $B(z_1,r)$ to $\tilde F_i.$ This finishes the proof of part a) of the lemma.\\

In order to prove part b), we first remark that it suffices to prove that
 $\{f^{(p)}\}$
has a subsequence which converges locally uniformly on  $D$ to a holomorphic mapping $f$ of $D$ to $\P^N\big(\C\big),$ that means
that after passing to a subsequence we have:\\
 Let $z_1$ be any point of $D$. Then there exists $r >0$ and,  
for each $f^{(p)}$  a holomorphic representation on $B(z_1,r)$
$$f^{(p)}=(f_0^{(p)}:f_1^{(p)}:...:f_N^{(p)})$$
with suitable holomorphic functions $f_i^{(p)}$ $(0\le i\le N)$ without common zeros on $B(z_1,r)$, such that $\{f_i^{(p)}\}\to f_i \ (0\le i \le N) $ uniformly on $B(z_1,r)$
and $f=(f_0:f_1:...:f_N)$ is a holomorphic map on $B(z_1,r)$, that means
 the $f_i$ $(0\le i\le N)$ are without common zeros on $B(z_1,r)$.

By part a) we know that  $\{f^{(p)}\}$
has a subsequence which converges meromorphically on  $D$ to a meromorphic mapping $f$ of $D$ to $\P^N\big(\C\big),$ that means
that after passing to a subsequence we have:\\
 Let $z_1$ be any point of $D$. Then there exists $r >0$ and,  
for each $f^{(p)}$  an admissible representation on $B(z_1,r)$
$$f^{(p)}=(f_0^{(p)}:f_1^{(p)}:...:f_N^{(p)})$$
with suitable holomorphic functions $f_i^{(p)}$ $(0\le i\le N)$  on $B(z_1,r)$, such that $\{f_i^{(p)}\}\to f_i \ (0\le i \le N) $ uniformly on $B(z_1,r)$
and $f=(f_0:f_1:...:f_N)$ is a meromorphic map on $B(z_1,r)$.
Observing that the admissible representations of the holomorphic maps
$f^{(p)}=(f_0^{(p)}:f_1^{(p)}:...:f_N^{(p)})$ are automatically without common zeros, the only thing which remains to be proved is that 
under the conditions of part b) we have 
$$E= \{z \in B(z_1,r) : f_0(z)=f_1(z)=...=f_N(z)=0\} = \emptyset \,.$$
We also recall that by the proof of part a) we have that: There exists $k_0 \in \{1,...,N+1\}$ such that 
$Q^{(p)}=Q_{k_0}(f^{(p)})$, $p \geq 1$ converge uniformly on compact subsets of $D$ to $Q=Q_{k_0}$, and 
$f(D - S) \not\subset H_{k_0}$, more precisely that for any
representation $f=(f_0:...:f_N)$ of the meromorphic map $f : D  \to \P^N(\C)$ (admissible or not) we have 
\begin{equation} \label{null}
Q(f_0,...,f_N) \not\equiv 0\,.
\end{equation}

Now we can end the proof with an easy application of Hurwitz's theorem:
By the condition of b) we have that for all $p \geq 1$, 
$$Q^{(p)}(f_0^{(p)},...,f_N^{(p)}) \not= 0$$ on $B(z_1,r)$.
 And we also have
that $$Q^{(p)}(f_0^{(p)},...,f_N^{(p)})\to Q(f_0,...,f_N)$$ uniformly on compact subsets of $B(z_1,r)$. By equation (\ref{null}) and the Hurwitz's theorem
we get that $Q(f_0,...,f_N) \not= 0$ on $B(z_1,r)$. But since $Q$ is a homogenous polynomial this implies that 
$$E= \{z \in B(z_1,r) : f_0(z)=f_1(z)=...=f_N(z)=0\} = \emptyset \,.$$
\end{proof}

We remark that the following corollary of part a) of the previous lemma generalizes the Proposition $3.5$ in $[2]$.
\begin{cor}
{\it Let $\{f^{(p)}\}$
be a sequence of meromorphic mappings of 
 a domain $D$ in
$\C^n$ into $\P^N(\C)$ and let $S$ be a closed subset of $D$ with $\Lambda^{2n-1}(S)=0$.
Suppose that $\{f^{(p)}\}$ meromorphically converges on $D-S$ to a meromorphic mapping 
$f$ of $D-S$ into $\P^N(\C)$.
If there exists a 
moving hypersurface $H$ in $\P^N(\C)$
such that $f(D-S)\not\subset H$ and 
$\{\nu(f^{(p)},H)\}$
is a convergent sequence of divisors on $D$, then 
$\{f^{(p)}\}$ is meromorphically convergent on $D$.}
\end{cor}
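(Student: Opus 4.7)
The plan is to deduce this corollary directly from the proof of part a) of Lemma \ref{lemma 6n}, by observing that in that proof the $N+1$ moving hypersurfaces and conditions i)--iii) are used \emph{only} to produce, after passing to a subsequence, a single index $k_0$ with three properties: (A) the defining polynomials $Q_{k_0}(f^{(p)})$ converge uniformly on compact subsets of $D$ to a not identically zero homogeneous polynomial $Q_{k_0}$, (B) the divisors $\{\nu(f^{(p)}, H_{k_0}(f^{(p)}))\}$ converge on $D$, and (C) $f(D-S)\not\subset H_{k_0}$. In the present corollary all three come for free: the hypersurface $H$ (with defining polynomial $Q$) is independent of $p$, so (A) is trivial with $Q^{(p)} := Q$ constant in $p$; (B) is precisely the convergence-of-divisors hypothesis; and (C) is the non-inclusion hypothesis.

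Consequently no subsequence extraction is required, and I would simply replay the second half of the proof of Lemma \ref{lemma 6n} a) verbatim with $Q^{(p)} = Q$, $H^{(p)} = H$, $d = \deg Q$. The structure of that argument, which carries all the actual analytic content, is the following. Fix an arbitrary $z_1 \in S$ and a ball $B(z_1, r) \Subset D$. Apply \cite{S1} Theorem 3.6 to the convergent divisor sequence to obtain holomorphic functions $h^{(p)} \not\equiv 0$ and $h \not\equiv 0$ on $B(z_1, r)$ with $\nu(f^{(p)}, H) = \nu_{h^{(p)}}$, $\nu = \nu_h$, and $h^{(p)} \to h$ uniformly on compact subsets. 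Pick admissible representations $\tilde f^{(p)} = (f_0^{(p)}, \ldots, f_N^{(p)})$ on $B(z_1, r)$. Over each point $z$ of the pathwise connected open set $B(z_1, r) - (S \cup \{h = 0\})$ (pathwise connectedness follows from $\Lambda^{2n-1}(S \cup \{h = 0\}) = 0$), rescale the $\tilde f^{(p)}$ by non-vanishing holomorphic factors $u_z^{(p)}$ so as to converge to a representation of $f$, and extract $d$-th roots of the identity $Q(u_z^{(p)} f_0^{(p)}, \ldots, u_z^{(p)} f_N^{(p)}) = (u_z^{(p)} k^{(p)})^d h^{(p)}$ to produce limits $(u_z^{(p)} k^{(p)})^d \to Q(f_0^z, \ldots, f_N^z)/h =: (F^z)^d$. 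Partition $\N$ according to the $d$-th roots of unity, and use path-connectedness to identify the $d$-valued ambiguities via a single bijection $\alpha$, yielding globally defined holomorphic functions $\tilde f_i^{(p)}$ on $B(z_1, r)$ that converge locally uniformly to $F_i$ on $B(z_1, r) - (S \cup \{h = 0\})$. Finally, for each $z^{(0)} \in S_1 := S \cup \{h = 0\}$, choose a complex line $\ell_{z^{(0)}}$ through $z^{(0)}$ with $\Lambda^1(S_1 \cap \ell_{z^{(0)}}) = 0$ and a small circle on $\ell_{z^{(0)}}$ that avoids $S_1$; the maximum principle along such circles extends $F_i$ across $S_1$ to $\tilde F_i$ on $B(z_1, r)$ and upgrades the convergence $\tilde f_i^{(p)} \to \tilde F_i$ to local uniform convergence on all of $B(z_1, r)$. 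This exhibits $\{f^{(p)}\}$ as meromorphically convergent on $B(z_1, r)$ to $\tilde F = (\tilde F_0 : \cdots : \tilde F_N)$, and since $z_1 \in S$ was arbitrary, meromorphically convergent on $D$.

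Thus the only real point is the decoupling observation of the first paragraph: the multi-hypersurface machinery of Lemma \ref{lemma 6n} (with its reliance on Lemma \ref{lemma 5n} and general position) is deployed solely to manufacture one hypersurface with properties (A)--(C), and when such a hypersurface is postulated from the outset the bulk of the proof goes through unchanged. No new technical obstacle arises; if anything, the corollary is \emph{easier} than Lemma \ref{lemma 6n} a), precisely because the subsequence-extraction step can be skipped entirely.
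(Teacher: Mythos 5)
Your proposal is correct and is essentially the derivation the paper intends: the corollary is stated without a separate proof as a consequence of Lemma \ref{lemma 6n} a), and your decoupling observation — that the hypotheses of the corollary directly furnish the three inputs (locally uniform convergence of the defining polynomials, convergence of the divisors, and $f(D-S)\not\subset H$) which the second half of that proof actually uses — is exactly what makes the corollary follow. You are also right that, since no subsequence extraction is needed, replaying that argument verbatim with $Q^{(p)}=Q$ gives meromorphic convergence of the full sequence on $D$, which is the (slightly stronger) conclusion claimed.
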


\begin{lem}\label{lemma 9}(\cite[Theorem 2.5]{TTH})
{\it Let $\mathcal F$ be a family of holomorphic mappings of a domain $D$ in $\C^n$ onto $\P^N\big(\C\big)$. Then the family $\mathcal F$ is not normal on $D$ if and only if there exist a compact subset $K_0 \subset D$ and sequences $\{f_i\} \subset \mathcal F, \{z_i\} \subset K_0, \{r_i\}\subset \R$ with $r_i> 0$ and $r_i \longrightarrow 0^+$, and $\{u_i\} \subset \C^n$ which are unit vectors such that
\begin{equation*}
g_i(\xi) := f_i\big(z_i +r_iu_i\xi),
\end{equation*}
where $\xi \in \C$ such that $z_i + r_iu_i\xi \in D$, converges uniformly on compact subsets of $\C$ to a nonconstant holomorphic map $g$ of $\C$ to $\P^N\big(\C\big)$.}
\end{lem}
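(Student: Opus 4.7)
The backward direction is immediate: if a rescaling sequence $g_i \to g$ with $g$ nonconstant exists but $\mathcal F$ were normal on $D$, then after extracting a subsequence, $f_i \to f$ locally uniformly on $D$ for some holomorphic $f: D \to \P^N(\C)$, and $z_i \to z_0 \in K_0$ (by compactness, after a further extraction). For each fixed $\xi \in \C$, the points $z_i + r_i u_i \xi$ tend to $z_0$ since $r_i \to 0^+$, so by local uniform convergence $g_i(\xi) \to f(z_0)$. Hence $g \equiv f(z_0)$ is constant, contradicting the hypothesis.

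For the forward direction the plan is a Zalcman-type rescaling argument. First, I would invoke Marty's criterion in the Fubini-Study formulation: for a holomorphic $f: D \to \P^N(\C)$ define
\begin{equation*}
f^\#(z) := \sup_{\|u\|=1} \|df(z)(u)\|_{FS};
\end{equation*}
then $\mathcal F$ is not normal on $D$ if and only if $f^\#$ is not locally uniformly bounded over $\mathcal F$. Hence there exist a compact $K_0 \subset D$, a sequence $\{f_i\} \subset \mathcal F$, and points $z_i' \in K_0$ with $f_i^\#(z_i') \to \infty$. After extracting, $z_i' \to z^* \in K_0$, and I fix $\rho > 0$ small enough so that $\overline{B(z^*,2\rho)} \subset D$, hence $\overline{B(z_i',\rho)} \subset D$ for large $i$. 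Now I apply the classical Zalcman maximum trick: the continuous function $h_i(z) := (\rho - \|z - z_i'\|) f_i^\#(z)$ on $\overline{B(z_i',\rho)}$ vanishes on the boundary and satisfies $h_i(z_i') = \rho \cdot f_i^\#(z_i') \to \infty$, so it attains its maximum at some interior $w_i$ with $h_i(w_i) \to \infty$. Set $M_i := f_i^\#(w_i)$ and $r_i := 1/M_i \to 0^+$, and choose unit vectors $u_i \in \C^n$ with $\|df_i(w_i)(u_i)\|_{FS} = M_i$. Define $g_i(\xi) := f_i(w_i + r_i u_i \xi)$, which for any fixed $R > 0$ is defined on $|\xi| < R$ once $i$ is large.

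Maximality of $h_i$ at $w_i$ yields, for $|\xi| \leq R$,
\begin{equation*}
g_i^\#(\xi) = r_i \|df_i(w_i + r_i u_i \xi)(u_i)\|_{FS} \leq r_i f_i^\#(w_i + r_i u_i \xi) \leq \frac{\rho - \|w_i - z_i'\|}{\rho - \|w_i - z_i'\| - r_i R},
\end{equation*}
whose right-hand side tends to $1$; thus $\{g_i^\#\}$ is locally uniformly bounded on $\C$. The one-variable Marty criterion then extracts a subsequence converging locally uniformly on $\C$ to a holomorphic $g: \C \to \P^N(\C)$. Passing $g_i^\#(0) = r_i M_i = 1$ to the limit yields $g^\#(0) = 1$, so $g$ is nonconstant. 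Relabeling $w_i$ as $z_i$ and noting $\|w_i - z^*\| < \rho$, the centers lie in a slightly enlarged compact subset of $D$ which serves as the $K_0$ in the statement. The main obstacle is the correct formulation of Marty's criterion for holomorphic maps of several complex variables into $\P^N(\C)$ with Fubini-Study target metric: defining $f^\#$ via the operator norm $\sup_{\|u\|=1}\|df(z)(u)\|_{FS}$ is what both guarantees the blow-up of $f^\#$ on some compact subset under non-normality and supplies the maximizing directions $u_i$ that reduce the rescaling to a restriction along a complex line, where the classical Zalcman argument applies verbatim.
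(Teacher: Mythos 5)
Your proof is correct. The paper gives no proof of this lemma but simply cites it from \cite{TTH}; your argument --- the Marty-type criterion for holomorphic maps into the compact target $\P^N(\C)$ (equivalence of normality with local uniform boundedness of $f^\#(z)=\sup_{\|u\|=1}\|df(z)(u)\|_{FS}$), followed by the Zalcman maximum trick on $h_i(z)=(\rho-\|z-z_i'\|)f_i^\#(z)$ and rescaling along a maximizing unit direction $u_i$ --- is precisely the standard Aladro--Krantz/Zalcman renormalization on which that cited result rests, so it matches the intended proof in both substance and structure.
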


\begin{lem}\label{lemma 10}(See \cite[Theorem 4']{Noc})
{\it Suppose that $q \geqslant 2N + 1$ hyperplanes $H_1, \cdots, H_q$ are given in general position in $\P^N\big(\C\big)$ and $q$ positive intergers (may be $\infty$) $m_1, \cdots, m_q$ are given such that
\begin{equation*}
\sum_{i = 1}^q \bigg(1 - \dfrac{N}{m_j}\bigg) > N+1. 
\end{equation*}
Then there does not exist a nonconstant holomorphic mapping
$f : \C \longrightarrow \P^N\big(\C\big)$ 
such that $f$ intersects $H_j$ with multiplicity at least $m_j\ (1\leq j \leq q).$}
\end{lem}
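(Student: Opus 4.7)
The statement is Nochka's theorem, quoted from [Noc, Theorem~$4'$], so I only sketch the standard Nevanlinna-theoretic approach. The plan is a proof by contradiction using Nochka's Second Main Theorem for hyperplanes in subgeneral position.

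Suppose there exists a nonconstant holomorphic map $f:\C\to\P^N(\C)$ intersecting each $H_j$ with multiplicity at least $m_j$. First I reduce to the linearly nondegenerate case: let $V\subset\P^N(\C)$ be the smallest linear subspace containing $f(\C)$, and put $k=\dim V$ with $1\le k\le N$. Identify $V\cong\P^k(\C)$ and view $f$ as a linearly nondegenerate holomorphic map into $\P^k(\C)$. After absorbing the degenerate possibility $V\subset H_j$ into the case $m_j=\infty$, the restrictions $\tilde H_j:=H_j\cap V$ are hyperplanes in $\P^k(\C)$, and the general position of $\{H_j\}$ in $\P^N(\C)$ immediately yields that $\{\tilde H_j\}$ are in $N$-subgeneral position in $\P^k(\C)$ (any common zero of $N+1$ of the $\tilde H_j$'s in $V$ would be a common zero of the corresponding $H_j$'s in $\P^N(\C)$, contradicting general position).

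Next, since $q\ge 2N+1\ge 2N-k+1$, I apply Nochka's truncated Second Main Theorem for the linearly nondegenerate curve $f:\C\to\P^k(\C)$ and hyperplanes $\tilde H_j$ in $N$-subgeneral position:
\[
\bigl\|\,(q-2N+k-1)\,T_f(r)\;\le\;\sum_{j=1}^q N_f^{(k)}(r,\tilde H_j)+o(T_f(r))\,\bigr\|,
\]
where $N_f^{(k)}$ denotes the counting function truncated at level $k$ and $\|\cdot\|$ means the bound holds outside a set of finite Lebesgue measure. Since each zero of $\tilde H_j(f)$ has multiplicity at least $m_j$, the elementary truncation bound gives
\[
N_f^{(k)}(r,\tilde H_j)\;\le\;\frac{k}{m_j}\,N_f(r,\tilde H_j)\;\le\;\frac{k}{m_j}\,T_f(r)+O(1)
\]
by the First Main Theorem. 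Combining and using $T_f(r)\to\infty$ (which holds because $f$ is nonconstant) yields
\[
\sum_{j=1}^q\frac{1}{m_j}\;\ge\;\frac{q-2N+k-1}{k}.
\]

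The hypothesis $\sum_{j=1}^q(1-N/m_j)>N+1$ rewrites as $\sum_{j=1}^q 1/m_j<(q-N-1)/N$, so it remains to verify the elementary inequality $(q-2N+k-1)/k\ge(q-N-1)/N$. The difference factors as $(N-k)(q-2N-1)/(kN)\ge 0$ for $1\le k\le N$ and $q\ge 2N+1$, with equality only in the boundary cases $k=N$ or $q=2N+1$; in both boundary cases the strict inequality coming from the hypothesis still contradicts the non-strict lower bound derived above. The only substantive step is the invocation of Nochka's weighted Second Main Theorem in Step~2 (which supplies the weights that make the coefficient $(q-2N+k-1)$ on the left-hand side optimal and thereby matches the hypothesis exactly); this is the main technical obstacle one would face if proving the result from scratch rather than citing [Noc]. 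Everything else is routine Nevanlinna-theoretic bookkeeping and algebra.
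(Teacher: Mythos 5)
The paper does not prove this lemma at all: it is quoted verbatim as Nochka's theorem with a citation to \cite{Noc}, so there is no in-paper argument to compare against. Your sketch is the standard Nevanlinna-theoretic derivation of this Picard-type statement from Nochka's truncated Second Main Theorem for hyperplanes in $N$-subgeneral position, and the quantitative bookkeeping is correct: the hypothesis is indeed equivalent to $\sum_j 1/m_j<(q-N-1)/N$, the truncation estimate $N_f^{(k)}(r,\tilde H_j)\le (k/m_j)T_f(r)+O(1)$ is right, and the numerator of the comparison of slopes does factor as $(N-k)(q-2N-1)$, so the chain of inequalities closes to a strict contradiction. As you say, the only non-elementary input is Nochka's weighted SMT, which is exactly the content of the reference the paper invokes.

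One spot is imprecise as written: the degenerate case $V\subset H_j$ cannot be ``absorbed into the case $m_j=\infty$''. Under the paper's definition, $f(\C)\subset H_j$ counts as intersecting $H_j$ with multiplicity at least $m_j$ for \emph{every} $m_j$, so this case can occur for finite $m_j$ as well; and when it occurs, $\tilde H_j=H_j\cap V$ equals $V$ and is not a hyperplane of $V$, so it cannot be fed into the SMT regardless of the value of $m_j$. The correct fix is to discard the $s\le N-k$ indices $j$ with $V\subset H_j$; the remaining $q-s$ restricted hyperplanes are then in $(N-s)$-subgeneral position in $V\cong\P^k(\C)$, and Nochka's SMT gives $q+s-2N+k-1\le k\sum_{j\notin J_0}1/m_j\le k\sum_{j=1}^q 1/m_j$, which is an even stronger inequality than the one you use (since $s\ge0$ and dropped terms are nonnegative), so the same contradiction obtains. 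With that repair the sketch is a complete and correct reduction of the lemma to \cite{Noc}.
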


\begin{lem}\label{lemma 11}
{\it Let $P_0, \cdots, P_N$ be $N + 1$ homogeneous polynomials of common degree in $\C[x_0, \cdots, x_n]$. Let  $\{Q_j\}_{j=1}^q\ (q \geqslant N+1)$ be homogeneous polynomials in  
$\mathcal S\big(\{P_i\}_{i=0}^N\big)$ such that\\
$$D\big(Q_1, \cdots, Q_q\big) =
 \prod_{1 \leq j_0 < \cdots < j_N \leq q} \inf_{||\omega|| = 1} \bigg(\big|Q_{j_0}(\omega)\big|^2 + \cdots + \big|Q_{j_N}(\omega)\big|^2\bigg) > 0,$$
where $Q_j(\omega)=\sum_{| I | = d_j} a_{jI}.\omega^I$.

Then $\{Q_j\}_{j=1}^q$ are located in general position in $\mathcal S\big(\{P_i\}_{i=0}^N\big)$ and  $\{P_i\}_{i=0}^N$ are located in general position in $P^N(\C)$. (cf. Sec 2.3 and 2.6)}.
\end{lem}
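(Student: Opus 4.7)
The plan is to prove both assertions by contradiction, in each case producing a non-trivial common zero of an $(N+1)$-subfamily of the $Q_j$, which makes the corresponding factor of $D(Q_1,\ldots,Q_q)$ vanish.

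First, for the general position of $\{P_i\}_{i=0}^N$ in $\P^N(\C)$: if it failed, there would exist $\omega\in\C^{N+1}\setminus\{0\}$ with $P_i(\omega)=0$ for every $0\leq i\leq N$. Since each $Q_j=\sum_{i=0}^N b_{ji}P_i$, this would force $Q_j(\omega)=0$ for every $j=1,\ldots,q$. Normalizing $\|\omega\|=1$, every factor in $D(Q_1,\ldots,Q_q)$ would attain the value zero, contradicting $D(Q_1,\ldots,Q_q)>0$.

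For the general position of $\{Q_j\}_{j=1}^q$ in $\mathcal{S}(\{P_i\}_{i=0}^N)$, fix indices $1\leq j_0<\cdots<j_N\leq q$ and suppose for contradiction $\det(b_{j_k i})_{0\leq k,i\leq N}=0$. Linear dependence of the rows provides scalars $c_0,\ldots,c_N\in\C$, not all zero, with $\sum_{k=0}^N c_k b_{j_k i}=0$ for every $i$, and hence the polynomial identity $\sum_{k=0}^N c_k Q_{j_k}\equiv 0$. Pick $k_0$ with $c_{k_0}\neq 0$, so that $Q_{j_{k_0}}$ is a $\C$-linear combination of the remaining $Q_{j_k}$'s.

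The key step is to exhibit a non-zero $\omega$ annihilating all of $Q_{j_0},\ldots,Q_{j_N}$ simultaneously. I would invoke the classical projective dimension fact that any $N$ non-zero homogeneous polynomials in $N+1$ complex variables admit a common zero in $\P^N(\C)$, since each hypersurface drops the dimension by at most one, so the intersection of $N$ of them has dimension at least zero. Applied to $\{Q_{j_k}\}_{k\neq k_0}$, this yields $\omega\in\C^{N+1}\setminus\{0\}$ with $Q_{j_k}(\omega)=0$ for all $k\neq k_0$ (if some $Q_{j_k}$ happens to be identically zero the claim is trivial for that index), and the linear relation then forces $Q_{j_{k_0}}(\omega)=0$. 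Rescaling to $\|\omega\|=1$ makes the factor of $D(Q_1,\ldots,Q_q)$ indexed by $(j_0,\ldots,j_N)$ vanish, a contradiction. The only non-elementary input is the cited projective intersection fact; everything else reduces to linear algebra.
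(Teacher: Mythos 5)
Your proposal is correct and follows essentially the same route as the paper: for the $P_i$ you produce a common non-trivial zero that kills every $Q_j$, and for the $Q_j$ you convert the vanishing determinant into a linear relation among $Q_{j_0},\dots,Q_{j_N}$, reducing to $N$ homogeneous polynomials in $N+1$ variables, whose common zero locus is non-empty by the same dimension count the paper uses (the paper phrases it as $\dim X\geq 1$ in $\C^{N+1}$, you as dimension $\geq 0$ in $\P^N(\C)$). No substantive difference.
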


\begin{proof} a) Suppose that $\{Q_j\}_{j=1}^q$  are not  located in general position in $S\big(\{P_i\}_{i=0}^N\big).$ Then there exist $N+1$ polynomials in $\{Q_j\}_{j=1}^q$ which are not linearly independent. Without loss of generality we may assume that
\begin{equation*}
Q_{N + 1} = \sum_{j = 1}^N b_jQ_j \; (b_j \in \C).
\end{equation*}
Then
\begin{equation*}
\begin{aligned}
X &= \{\omega \in \C^{N+1} \big| Q_1(\omega) = \cdots = Q_N(\omega) = Q_{N+1}(\omega) = 0\}\\
    &= \{ \omega \in \C^{N+1} \big| Q_1(\omega) = \cdots = Q_N(\omega) = 0\}
\end{aligned}
\end{equation*}
is an analytic subset in $\C^{N+1}$. Since $\dim X \geqslant 1$, there exists $\omega_0 \not= 0$ in $\C^{N+1}$ such that
\begin{equation*}
Q_1(\omega_0) = \cdots = Q_N(\omega_0) = Q_{N+1}(\omega_0) = 0.
\end{equation*}
Moreover, since  $\{Q_j\}_{j=1}^q$ are all homogenous polynomials, we may assume that $||\omega_0|| = 1$.
Thus, we have $$| Q_1(\omega_0)|^2 + \cdots + | Q_{N+1}(\omega_0)|^2 = 0\,,$$
and, hence,
$$D\big(Q_1, \cdots Q_q\big) = 0.$$  This is a contradiction.

b) Suppose that $\{P_i\}_{i=0}^N$ are not located in general position in $P^N(\C)$. Then there exists $\omega_0 \not= 0$ in $\C^{N+1}$ such that
\begin{equation*}
P_0(\omega_0) = \cdots = P_N(\omega_0) = 0.
\end{equation*}
Therefore, we have $Q_j(\omega_0)=0$ for any $1\leq j \leq q,$ and thus 
again 
$$D\big(Q_1, \cdots Q_q\big) = 0.$$  This is a contradiction.
\end{proof}

\begin{lem}\label{lemma 12}
{\it Let $f = (f_0 : \cdots : f_N) : \C \longrightarrow \P^N\big(\C\big)$ be a holomorphic mapping and $\{P_i\}_{i=0}^N$ be $N+1$ homogeneous polynomials in general position of common degree in $\C[\omega_0, \cdots, \omega_N]$. Assume that  $$F = (F_0 : \cdots : F_n) : \P^N\big(\C\big) \longrightarrow \P^N\big(\C\big)$$ is the mapping defined by
\begin{equation*}
F_i(\omega) = P_i\big(\omega\big),(0\leq i \leq N).
\end{equation*}
Then, $F(f)$ is a constant map if only if $f$ is a constant map.}
\end{lem}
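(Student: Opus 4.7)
The $(\Leftarrow)$ direction is immediate. For $(\Rightarrow)$, the plan is to reduce to showing that the fiber of $F$ over the constant value $[c_0 : \cdots : c_N] := F(f(0))$ is a finite subset of $\P^N(\C)$. Once this is done, since $f : \C \to \P^N(\C)$ is continuous with connected source and takes values in $F^{-1}([c_0 : \cdots : c_N])$, $f$ must be constant. (The case $N = 0$ is trivial, so I will assume $N \geq 1$.)

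First I would note that the general position hypothesis on $\{P_i\}_{i=0}^N$ means $\{P_0 = \cdots = P_N = 0\} = \{0\}$ in $\C^{N+1}$, so that $F$ is everywhere defined on $\P^N(\C)$. After choosing $k$ with $c_k \neq 0$ (WLOG $k = N$), the fiber is cut out in $\P^N(\C)$ by the $N$ homogeneous equations $c_N P_i(\omega) - c_i P_N(\omega) = 0$ for $0 \leq i \leq N-1$.

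The main (and only non-routine) step is then to rule out positive-dimensional components of this fiber. I would argue by contradiction: if $X \subset F^{-1}([c_0:\cdots:c_N])$ is an irreducible analytic subset of $\P^N(\C)$ of dimension $\geq 1$, then either $X \subset \{P_N = 0\}$, or by the projective dimension inequality applied to $X$ and the hypersurface $\{P_N = 0\}$, the intersection $X \cap \{P_N = 0\}$ is non-empty; in either case one may pick $\omega \in X$ with $P_N(\omega) = 0$. The defining equations of $X$ then force $c_N P_i(\omega) = c_i P_N(\omega) = 0$, and since $c_N \neq 0$ we get $P_i(\omega) = 0$ for all $0 \leq i \leq N-1$. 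Combined with $P_N(\omega) = 0$ this yields a non-trivial common zero of $\{P_i\}_{i=0}^N$, contradicting general position.

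Hence $F^{-1}([c_0:\cdots:c_N])$ is $0$-dimensional and therefore finite (as a compact analytic subset of $\P^N(\C)$), so $f$ is constant by the connectedness argument above. The hard part is the dimension-counting step where the general position hypothesis is used essentially to force the positive-dimensional component $X$ to meet $\{P_N = 0\}$ and produce the required contradiction; the rest is formal.
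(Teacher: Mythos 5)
Your proof is correct and follows essentially the same route as the paper's: both reduce the problem to showing that the fiber $F^{-1}(a)$ over the constant value $a$ is finite (and then conclude from continuity of $f$ and connectedness of $\C$), and both rule out a positive-dimensional component of the fiber by intersecting it with a hypersurface in $\P^N(\C)$. Your hypersurface $\{P_N=0\}=F^{-1}(\{\omega_N=0\})$ with $c_N\neq 0$ is a concrete instance of the paper's auxiliary hyperplane $H$ with $a\notin H$, and your contradiction (a nontrivial common zero of all the $P_i$, violating general position) plays exactly the role of the paper's contradiction $a=F(\omega_0)\in H$.
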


\begin{proof} Since $\{P_i\}_{i=0}^N$ are $N+1$ homogeneous polynomials in general position of common degree in $\C[\omega_0, \cdots, \omega_N]$, $F$ is a morphism. Suppose that $F(f) = a,$ where $a = (a_0 : \cdots : a_n)\in \P^N\big(\C\big).$  We have $f(\C) \subset F^{-1}(a). $
Suppose that $dimF^{-1}(a) \geqslant 1$. Take $H$ any hyperplane in $\P^N\big(\C\big)$ with $a \not\in H$. Then  $F^{-1}(H)$  is a hypersurface in $\P^N\big(\C\big)$ since  the $\{P_i\}_{i=0}^N$
are in general position, so in particular they are not linearly dependant.
By Bezout's theorem there exists a point  $\omega_0 \in F^{-1}(a) \cap F^{-1}(H)$. Hence, 
$a= F(\omega_0) \in H$. This is a contradiction.
Therefore, $dimF^{-1}(a) =0$, so $F^{-1}(a)$ is a finite set. Since $f$ is continuous and $f(\C) \subset F^{-1}(a)$, it must be a constant map.
\end{proof}

\begin{lem}\label{lemma 13}
{\it Let $P_0, \cdots, P_N$ be $N + 1$ homogeneous polynomials of common degree in $\C[\omega_0, \cdots, \omega_N]$ and  $\{Q_j\}_{j=1}^q\ (q \geqslant 2N+1)$ be homogeneous polynomials in 
$\mathcal S\big(\{P_i\}_{i=0}^N\big)$ such that
\begin{equation*}
D\big(Q_1, \cdots, Q_q\big) > 0.
\end{equation*}
Assume that $m_1, \cdots, m_q$ are positive intergers (may be $\infty$)  such that
\begin{equation*}
\sum_{j=1}^q \bigg(1 - \dfrac{N}{m_j}\bigg) > N+1.
\end{equation*}
Then there does not exist a nonconstant holomorphic mapping
$$f : \C \longrightarrow \P^N\big(\C\big)$$ 
such that $f$ intersects $Q_j$ with multiplicity at least $m_j \ (1\leq j \leq q)$.}
\end{lem}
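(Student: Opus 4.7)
The plan is to reduce the statement to the hyperplane case (Lemma \ref{lemma 10}) by a change of coordinates in the target, using the map
\begin{equation*}
F = (P_0 : \cdots : P_N) : \P^N(\C) \longrightarrow \P^N(\C).
\end{equation*}
First, by Lemma \ref{lemma 11}, the hypothesis $D(Q_1,\ldots,Q_q) > 0$ forces $\{P_i\}_{i=0}^N$ to be in general position in $\P^N(\C)$, so that $F$ has no indeterminacy and is a well-defined morphism (this is exactly the setting of Lemma \ref{lemma 12}). In the same step, Lemma \ref{lemma 11} also yields that $\{Q_j\}_{j=1}^q$ are in general position in $\mathcal S(\{P_i\}_{i=0}^N)$, i.e.\ writing $Q_j = \sum_{i=0}^N b_{ji}P_i$, any $(N+1)\times(N+1)$ minor of $(b_{ji})$ is nonzero.

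Next, to each $Q_j$ associate the hyperplane $H_j \subset \P^N(\C)$ with equation $\sum_{i=0}^N b_{ji}x_i = 0$. The non-vanishing of the $(N+1)$-minors of $(b_{ji})$ is exactly the statement that $H_1,\ldots,H_q$ are in general position in $\P^N(\C)$. For any admissible representation $\tilde f = (f_0,\ldots,f_N)$ of $f$, one has the identity
\begin{equation*}
\Bigl(\sum_{i=0}^N b_{ji}x_i\Bigr)\bigl(P_0(\tilde f),\ldots,P_N(\tilde f)\bigr) = \sum_{i=0}^N b_{ji}P_i(\tilde f) = Q_j(\tilde f),
\end{equation*}
so the divisors satisfy $\nu(F\circ f, H_j) = \nu(f, Q_j)$ for every $1\leq j\leq q$. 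In particular, the hypothesis that $f$ intersects each $Q_j$ with multiplicity at least $m_j$ translates verbatim into: $F\circ f$ intersects each hyperplane $H_j$ with multiplicity at least $m_j$.

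Now apply Lemma \ref{lemma 10} to $F\circ f: \C \to \P^N(\C)$ with the $q \geq 2N+1$ hyperplanes $H_1,\ldots,H_q$ in general position and the multiplicities $m_1,\ldots,m_q$ satisfying $\sum_{j=1}^q(1 - N/m_j) > N+1$: the conclusion is that $F\circ f$ must be constant. Finally, by Lemma \ref{lemma 12}, a holomorphic map $f:\C \to \P^N(\C)$ with $F\circ f$ constant is itself constant, contradicting the assumption that $f$ is nonconstant.

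The main technical point is the translation step: verifying that the general-position condition on $\{Q_j\}$ in $\mathcal S(\{P_i\})$ (given to us by Lemma \ref{lemma 11}) is \emph{precisely} what is needed both to ensure $F$ is a morphism and to guarantee that the induced hyperplanes $H_1,\ldots,H_q$ satisfy the general-position hypothesis of Lemma \ref{lemma 10}. Once this is in place, the rest of the argument is a direct composition with $F$ and an invocation of the two cited lemmas.
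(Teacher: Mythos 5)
Your proposal is correct and follows essentially the same route as the paper: invoke Lemma \ref{lemma 11} to get general position of the $P_i$ and of the $Q_j$ in $\mathcal S(\{P_i\}_{i=0}^N)$, pass to the hyperplanes $H_j$ given by the coefficient vectors $(b_{ji})$, observe $Q_j(\tilde f)=\sum_i b_{ji}P_i(\tilde f)$ so that $F\circ f$ meets each $H_j$ with multiplicity at least $m_j$, and conclude via Lemma \ref{lemma 10} and Lemma \ref{lemma 12}. The only cosmetic difference is that the paper defines $F$ directly as $(P_0(\tilde f):\cdots:P_N(\tilde f))$ rather than as a morphism of $\P^N(\C)$ composed with $f$.
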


\begin{proof} Suppose that $f : \C \longrightarrow \P^N\big(\C\big)$ is a holomorphic mapping such that $f$ intersects $Q_i$ with multiplicity at least $m_i \; (1\leq i\leq q).$  For each $1\leq i\leq q,$ we define
\begin{equation*}
Q_j = \sum_{i=0}^N b_{ji}P_i  
\end{equation*}
and
\begin{equation*}
H_j = \bigg\{\omega \in \C^{N+1} \bigg| \sum_{i=0}^N b_{ji}\omega_i = 0\bigg \}.
\end{equation*}
Let $\widetilde{f} = \big(f_0, \cdots, f_N\big)$ be an admissible representation of $f$ on $\C$ (i.e. the $f_0,...,f_N$ have no common zeros), and denote $F = (P_0\big(\widetilde{f}\big): ...: P_N\big(\widetilde{f}\big)).$  By Lemma \ref{lemma 11}, 
$\{P_i\}_{i=0}^N$ are in general position in $\P^N\big(\C\big)$ and $\{Q_j\}_{j=1}^q$ are located in general position in $\mathcal S\big(\{P_i\}_{i=0}^N\big).$ This means that  the hyperplanes $\{H_j\}_{j=1}^q$ are  located in general position in $\P^N\big(\C\big).$ Since $f$ intersects $Q_j$ with multiplicity at least $m_j$ and
$$Q_j(\tilde f)= \big(\sum_{i=0}^Nb_{ji}P_i\big)(\tilde f)= 
\sum_{i=0}^Nb_{ji}\big(P_i(\tilde f)\big)\,,$$
 $F$ also intersects $H_j$ with multiplicity at least $m_j  \ (1\leq j \leq q).$
By Lemma \ref{lemma 10}, $F$ is a constant map, and by Lemma \ref{lemma 12}, $f$ is a  constant map, too.
\end{proof}

\begin{lem}\label{lemma 5nn}
{\it Let natural numbers $N$ and $q \geqslant N+1$ be fixed.
Let $T_i^{(p)}\ (0 \leqslant i \leqslant N, \ p \geqslant 1)$ be moving hypersurfaces in $\P^N\big(\C\big)$ of common degree $d^{(p)}$ and 
 $H_j^{(p)} \in \widetilde{\mathcal{S}}\big(\{T_i^{(p)}\}_{i=0}^N\big)\; (1 \leqslant j \leqslant q, \ p \geqslant 1)$
such that the following conditions are satisfied:

i) For each $0 \leqslant i \leqslant N,$ the coefficients of the homogeneous polynomials $P_i^{(p)}$ which define the $T_i^{(p)}$  are bounded above uniformly  for all $p \geq 1$ on compact subsets of $D$, and for all 
$1 \leqslant j \leqslant q,$
the coefficients $b_{ij}^{(p)}(z)$ of the linear combinations of the 
$P_i^{(p)}$, $i=0,...,N$ which define the
homogeneous polynomials $Q_j^{(p)} = \sum_{i=0}^N b_{ij}^{(p)}P_i^{(p)}$
 which define the $H_j^{(p)}$  are bounded above uniformly  for all $p \geq 1$ on compact subsets of $D$.

ii) There exists $z_0 \in D$  such that 
$$inf_{p \in \N}\big\{D(Q_1^{(p)},...,Q_q^{(p)})(z_0) \big\} > 0 \mbox \,.$$
Then, we have:

a) There exists a subsequence $\{j_p\}\subset \N$ such that for 
$0 \leqslant i \leqslant N$, $P_i^{(j_p)}$ converge uniformly on compact subsets of $D$ to not identically zero homogenous polynomials
$P_i$ (meaning that the $P_i^{(j_p)}$ and $P_i$ are homogenous polynomials in
$ \widetilde{\mathcal H}_D[\omega_0, \cdots, \omega_N]$ of the same degree $d$, and all their coefficients converge uniformly on compact subsets of $D$),
and the $b_{ij}^{(j_p)}$ convergent uniformly on compact subsets of $D$
to $b_{ij} \in {\mathcal H}_D$ for all $0 \leq i \leq N, \: 1\leq j\leq q$.

b) The $Q_j^{(j_p)}= \sum_{i=0}^N b_{ij}^{(j_p)}P_i^{(j_p)}$ converge, for all  $0 \leq i \leq N, \: 1\leq j\leq q$, uniformly on compact subsets of $D$ to
 $Q_j := \sum_{i=0}^N b_{ij}P_i \in  \widetilde{\mathcal{S}}\big(\{P_i\}_{i=0}^N\big) $, and we have
$$ D\big(Q_1, \cdots, Q_q\big)(z_0)  > 0.$$
 In particular
 the moving hypersurfaces
 $Q_1(z_0), \cdots, Q_{q}(z_0)$ are located in general position,
 and the moving hypersurfaces $Q_1(z),...,Q_q(z)$ are located in (weakly) general position.}
\end{lem}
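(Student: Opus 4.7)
The plan is to mirror the proof of Lemma \ref{lemma 5n}, but to also extract individually convergent subsequences for the $P_i^{(p)}$ and for the coefficient functions $b_{ij}^{(p)}$, not only for their products $Q_j^{(p)}$. First, I bound the common degree $d^{(p)}$ of the $P_i^{(p)}$: since the coefficients (in $\omega$) of the $P_i^{(p)}$ at $z_0$ and the scalars $b_{ij}^{(p)}(z_0)$ are uniformly bounded in $p$, the coefficients of $Q_j^{(p)}(z_0) = \sum_i b_{ij}^{(p)}(z_0)\, P_i^{(p)}(z_0)$ are bounded in $\C$ by some constant $c$ independent of $j, p$. Rescaling by $1/c$ exactly as in the proof of Lemma \ref{lemma 5n}, the hypothesis $\inf_p D(Q_1^{(p)},\ldots,Q_q^{(p)})(z_0) > \delta > 0$ survives with a new positive bound $\tilde\delta$, and Lemma \ref{lemma 4} yields $d^{(p)} < M(\tilde\delta,N,q)$. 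After passing to a subsequence, I may assume $d^{(p)} \equiv d$ is constant.

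With the degree fixed, the coefficients $a_{I,i}^{(p)}$ of $P_i^{(p)} = \sum_I a_{I,i}^{(p)} \omega^I$ together with the functions $b_{ij}^{(p)}$ form a finite collection of holomorphic functions on $D$, locally uniformly bounded. Montel's theorem together with a standard diagonal argument against an exhaustion of $D$ by compacta yields a subsequence (still denoted by $p$) along which all $a_{I,i}^{(p)} \to a_{I,i}$ and $b_{ij}^{(p)} \to b_{ij}$ uniformly on compact subsets of $D$, with holomorphic limits. Setting $P_i := \sum_I a_{I,i} \omega^I$ and $Q_j := \sum_i b_{ij} P_i$, continuity of addition and multiplication in the compact-open topology gives $Q_j^{(p)} \to Q_j$ uniformly on compact subsets of $D$; since $D(\cdot)$ is continuous in the coefficients for fixed degree, we obtain
\begin{equation*}
D\big(Q_1,\ldots,Q_q\big)(z_0) \geq \liminf_{p\to\infty} D\big(Q_1^{(p)},\ldots,Q_q^{(p)}\big)(z_0) > 0.
\end{equation*}

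The main remaining difficulty, which I expect to be the key obstacle, is to rule out that any of the limit polynomials $P_i$ has all coefficients identically zero (or degenerates to lower degree). If some $P_i(z_0)$ were the zero polynomial in $\omega$, then $\{P_0(z_0),\ldots,P_N(z_0)\}$ would fail to be in general position in $\P^N(\C)$, and by Lemma \ref{lemma 11} applied to $Q_j(z_0) = \sum_i b_{ij}(z_0) P_i(z_0) \in \mathcal S(\{P_i(z_0)\}_{i=0}^N)$, this would force $D(Q_1,\ldots,Q_q)(z_0) = 0$, contradicting the previous display. Hence each $P_i(z_0)$ is a nonzero homogeneous polynomial of degree $d$, so each $P_i \in \widetilde{\mathcal H}_D[\omega_0,\ldots,\omega_N]$ has degree exactly $d$ and each $Q_j \in \widetilde{\mathcal S}(\{P_i\}_{i=0}^N)$, giving (a) and the convergence assertion of (b). Finally, the strict positivity of $D(Q_1,\ldots,Q_q)(z_0)$ is precisely the definition of general position for $\{Q_1(z_0),\ldots,Q_q(z_0)\}$ and of (weakly) general position for the moving hypersurfaces $\{Q_j(z)\}_{j=1}^q$ on $D$, finishing (b).
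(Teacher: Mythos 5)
Your proof is correct and follows essentially the same route as the paper's: a uniform degree bound via the rescaling trick and Lemma \ref{lemma 4}, Montel plus a diagonal argument applied to the coefficients of the $P_i^{(p)}$ and to the $b_{ij}^{(p)}$, continuity of $D$ at $z_0$, and the observation that a common nonzero root of the $P_i(z_0)$ would be a common root of the $Q_j(z_0)$, contradicting $D(Q_1,\ldots,Q_q)(z_0)>0$. The only (harmless) organizational difference is that you inline the degree-bounding and convergence argument for the $Q_j^{(p)}$ and deduce $Q_j^{(p)}\to Q_j$ from the convergence of the $P_i^{(p)}$ and $b_{ij}^{(p)}$, whereas the paper first invokes Lemma \ref{lemma 5n} for the $Q_j^{(p)}$ and then identifies the limits by uniqueness.
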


\begin{proof}
Since by our conditions on the coefficients of the $P^{(p)}_i$ and on the $b_{ij}^{(p)}$, for all $1 \leq j \leq q$ the coefficients of the homogenous polynomials $Q_j^{(p)}$ of degree $d^{(p)}$ are locally bounded uniformly for all $p \geq 1$  on compact subsets of $D$, all conditions of
Lemma \ref{lemma 5n} are satisfied  and 
we get that after passing to a subsequence (which we denote
for simplicity again by $\{p\}\subset \N$),
that for 
$1 \leqslant j \leqslant q$, $Q_j^{(p)}$ converge uniformly on compact subsets of $D$ to not identically vanishing homogenous polynomials 
$Q_j$ (meaning that the $Q_j^{(p)}$ and $Q_j$ are homogenous polynomials in
$ \widetilde{\mathcal H}_D[\omega_0, \cdots, \omega_N]$ of the same degree $d_j$, and all their coefficients converge uniformly on compact subsets of $D$). 
Moreover  (still by Lemma \ref{lemma 5n}) we have that
$$D\big(Q_1, \cdots, Q_{q}\big)(z_0)   > 0\,,$$
so the hypersurfaces  $Q_1(z_0), \cdots, Q_{q}(z_0)$ are located in general position,
 and the moving hypersurfaces $Q_1(z),...,Q_q(z)$ are located in (weakly) general position.
 Observe moreover that since all the
$Q_j^{(p)}, \: 1 \leq j \leq q$ were of the same degree $d^{(p)}$, we have
$d=d_j$ independant of $j$ for our subsequence. Hence, we have,
for all $0\leq i \leq N, p \geq 1$:
$$P_i^{(p)}(z)(\omega) = \sum_{| I | = d} a_{ipI}(z).\omega^I\,.$$
Now, since the $a_{jpI}(z) $ and the $b_{ij}^{(p)}(z)$ are locally bounded uniformly for all $p \geq 1$ on $D$,
by using Montel's theorem and a standard diagonal argument with respect to an exaustion of $D$ with compact subsets, 
after passing another time to a subsequence  (which we denote
for simplicity again by $\{p\}\subset \N$),
we also can assume that $\{ a_{ipI}(z)\}_{p=1}^\infty$ converges uniformly on compact subsets of $D$ to $ a_{iI}$ for each $i, I$, and 
that $\{ b_{ij}^{(p)}(z)\}_{p=1}^\infty$ converges uniformly on compact subsets of $D$ to $ b_{ij}(z)$ for each $i, j$.
  Denote by
\begin{equation*}
 P_i(z)(\omega) := \sum_{| I | =d}  a_{iI}(z).\omega^I.
\end{equation*}
Since the limit is unique, then we have  $Q_j= \sum_{i=0}^N b_{ij}P_i$ for $1 \leq j \leq q$ and in
particular that none of the $P_0(z), ..., P_N(z)$ is identically vanishing
(otherwise they could not be in (weakly) general position, which contradicted to
the general position of the $Q_1(z_0),...,Q_q(z_0)$: in fact, if 
the $P_i(z_0)(\omega)$
had a non-zero solution $\omega_0$ in common, so would the $Q_j(z_0)(\omega))$. Hence,  $Q_j  \in  \widetilde{\mathcal{S}}\big(\{P_i\}_{i=0}^N\big) $,
which completes the proof.
\end{proof}

\section{Proofs of the Theorems.}

\begin{proof}[{\bf Proofs of Theorem \ref{theorem 1} and Theorem \ref{theorem 2}}]\
Let $\{f^{(p)}\}$ be a sequence of meromorphic mappings in  $\mathcal F$.
We have to prove that after passing to a subsequence (which we denote
again by $\{f^{(p)}\}$), the sequence  $\{f^{(p)}\}$ converges meromorphically on $D$ to a meromorphic mapping $f$. Moreover, 
under the stronger conditions of Theorem  \ref{theorem 2}, we have to
show that  $\{f^{(p)}\}$ converges uniformly on compact subsets of $D$
to a holomorphic mapping $f$.

In order to simplify notation, we denote, for $1 \leqslant k \leqslant q$,
$$Q_k^{(p)}:=Q_k(f^{(p)}) \; {\rm and}\;  H_k^{(p)}:=H_k(f^{(p)})\,.$$
By Lemma \ref{lemma 5n}, after passing to a subsequence, 
 for all
$1 \leqslant k \leqslant q$, $Q_k^{(p)}$ converge uniformly on compact subsets of $D$ to
$Q_k$, meaning that the 
$$Q_k^{(p)} = Q_k^{(p)}(z)(\omega) = \sum_{| I | = d_k}  a_{kpI}(z).\omega^I \; {\rm and}\; Q_k = 
Q_k(z)(\omega) = \sum_{| I | = d_k}  a_{kI}(z).\omega^I $$
 are homogenous polynomials in
$ \widetilde{\mathcal H}_D[\omega_0, \cdots, \omega_N]$ of the same degree $d_k$, and all their coefficients $a_{kpI}$ converge uniformly on compact subsets of $D$ to $a_{kI}$. Moreover, $Q_1,..., Q_{q}$ are located in
(weakly) general position.

By condition ii) and Lemmas \ref{lemma 2}, \ref{lemma 1}, 
and by condition iii) and Lemmas \ref{lemma 2a}, \ref{lemma 1a}, 
after passing  to a subsequence, we may assume that the sequence $\{f^{(p)}\}$ satisfies
\begin{equation*}
\lim_{p \rightarrow \infty} (f^{(p)})^{-1}\big(H_k^{(p)}\big) = S_k \; (1 \leqslant k \leqslant N+1)
\end{equation*}
as a sequence of closed subsets of $D$, where $S_k$ are either empty or pure $(n-1)$-dimensional analytic sets in $D$, and
\begin{equation*}
\lim_{p \rightarrow \infty} (f^{(p)})^{-1}\big(H_k^{(p)}\big) - S = S_k \; (N+2 \leqslant k \leqslant q)
\end{equation*}
as a sequence of closed subsets of $D - S$, where $S_k$ are either empty or pure $(n-1)$-dimensional analytic sets in $D - S$. 

Let $T = (...,t_{kI},...) \  (1\leqslant k \leqslant q, |I| \leq M:=\max \{d_1,...,d_q\})$ be a family of variables. 
Set $\widetilde{Q}_k = \underset{|I|\leq M}{\sum} t_{kI}\omega^{I} \in \Z [T,\omega] \ (1\leq k\leq q).$
For each subset $L \subset \left\{ 1,...,q\right\}$ with $|L| = n + 1$, take $\widetilde{R}_L $ is the resultant of the $\widetilde{Q}_k \ (k \in L). $ Since 
$\left\{ Q_k\right\} _{k \in L}$ are in (weakly) general position, $\widetilde{R}_L(...,a_{kI},...) \not\equiv  0$ (where we put $a_{kI}=0$ for  $|I| \not= d_k$). We set 
\begin{align*}
\widetilde{S} :=\bigg\{ z \in D |\ \widetilde{R}_L(\cdots,a_{kI},\cdots) = 0& \text{ for some } L \subset \{ 1,\cdots,q\}\\
 & \text{ with } |L| = n + 1\bigg\}.
\end{align*} 
Let $E = (\overset{q}{\underset{k=1}{\cup}} S_k \cup \widetilde{S}) - S$. Then $E$ is either empty or a pure $(n-1)$-dimensional analytic set in $D -S.$

Fix any point $z_1$ in $(D - S) - E.$ Choose a relatively compact neighborhood $U_{z_1}$ of $z_1$ in $(D - S) - E$. Then $\{f^{(p)} \big|_{U_{z_1}}\} \subset Hol\big(U_{z_1}, \P^N\big(\C\big)\big)$. We now prove that the family $\{f^{(p)} \big|_{U_{z_1}}\}$ is a holomorphically normal family. 
Indeed, suppose that the family $\{f^{(p)} \big|_{U_{z_1}}\}$ is not holomorphically normal. By Lemma \ref{lemma 9}, there exist a subsequence (again denoted by $\{f^{(p)} \big|_{U_{z_1}}\}_{p=1}^\infty)$  and $P_0 \in U_{z_1}, \{P_p\}_{p=1}^\infty \subset U_{z_1}$ with $P_p \to P_0$, $\{r_p\} \subset (0, +\infty)$ with $r_p \to 0^+$ and 
$\{u_p\} \subset \C^n$, which are unit vectors, such that $g_p(z) := f^{(p)}\big(P_p + r_pu_pz\big)$ converges uniformly on compact subsets of $\C$ to a nonconstant holomorphic map $g$ of  $\C$ into 
$\P^N\big(\C\big).$
Then, there exist admissible representations $g^{(p)} = \big(g_0^{(p)}:\cdots : g_N^{(p)}\big)$ of $g^{(p)}$ and an admissible  representation $g = \big(g_0: \cdots : g_N\big)$ of $g$ such that the $\{g_i^{(p)}\}$ converge uniformly on compact subsets of $\C$ to $g_i$ $(0 \leq i \leq N)$ (observe that an admissible representation of a holomorphic map is automatically  without common zeros). This implies that $Q_k^{(p)}\big(P_p + r_pu_pz\big)\big(g_0^{(p)}(z),...,g_n^{(p)}(z)\big)$ converges uniformly on compact subsets of $\C$ to 
$Q_k\big(P_0\big)\big(g_0(z),...,g_N(z)\big),$ $(1 \leq k \leq q)$.
Thus, by the Hurwitz's theorem,  one of the following two assertions holds:

i) $Q_k(P_0)(g_0(z),...,g_N(z)) = 0$ on $\C$, i.e. $g(\C) \subset H_k(P_0),$

ii) $Q_k(P_0)(g_0(z),...,g_N(z)) \neq 0$ on $\C$, i.e. $g(\C) \cap H_k(P_0) = \emptyset.$\\
Denote by $J$ the set of all indices $k \in \left\{ 1,...,q\right\}$ with $g(\C) \subset H_k(P_0).$  Set $X = \underset{k \in J}{\cap}H_k(P_0)$ if $J \neq \emptyset$ and $X = \P^N(\C)$ if $J = \emptyset$. Since $\C$ is irreducible, there exists an irreducible component $Z$ of $X$ such that $g(\C) \subset Z - (\underset{k \notin J}{\cup}H_k(P_0))$. 
Since $P_0 \in U_{z_1}$, it implies that $\left\{ H_k(P_0) \right\}_{k=1}^q$ are in general position in $\P^N(\C)$, meaning that
\begin{equation} \label{e1}
{\rm for\:all\:} I \subset \{1,...,q\} {\rm \:with\:}
\# I = N+1\,,\: \underset{k \in I}{\cap}H_k(P_0) = \emptyset\,.
\end{equation}
Put $J^c:= \{1,...,q\} \setminus J$ and $m:=\dim_{\C}Z$.
We claim that for the hypersurfaces $\{H_k(P_0)\}_{k \in J^c}$
in $\P^N(\C)$ we have 
\begin{equation} \label{e2}
\# J^c \geq 2m+1\, ; \; 
{\rm for\:all\:} I \subset J^c {\rm \:with\:}
\# I = m+1\,,\: Z \cap (\underset{k \in I}{\cap}H_k(P_0)) = \emptyset\,.
\end{equation}
In fact, if $J = \emptyset$, so $X=\P^N(\C)$, this holds since $q \geq 2N+1$ and by (\ref{e1}). If $J\not= \emptyset$, the key observation is that by (\ref{e1}) and by B\'{e}zout's theorem we have 
$$
{\rm for\:all\:} 1 \leq l \leq q\,,\:
{\rm for\:all\:} I \subset \{1,...,q\} {\rm \:with\:}
\# I = l\,,\: \underset{k \in I}{\cap}H_k(P_0)$$
\begin{equation} \label{e3}
{\rm is \: of \:pure \: dimension\:} \dim_{\C}\underset{k \in I}{\cap}H_k(P_0) = \max \{N-l, -1\}
\end{equation}
(in particular all irreducible components of $\underset{k \in I}{\cap}H_k(P_0)$ are of the same dimension),
where $\dim_{\C}(\emptyset) = -1$. From that we first get
$$m = \dim_{\C}Z = \dim_{\C}\underset{k \in J}{\cap}H_k(P_0) = \max \{N-\# J, -1\}\,.
$$
Since $g(\C) \subset Z$, so $m \geq 0$, we get 
\begin{equation} \label{e4}
\# J = N-m \,.
\end{equation}
Hence, 
$$\# J^c = q - \# J \geq (2N+1) - (N-m) = N+m+1 \geq 2m+1\,.$$
Moreover, if $I \subset J^c$ with $\# I = m+1$, then by (\ref{e4})
$$\# (I \cup J) = (m+1) + (N-m) = N+1$$ and
$$Z \cap \underset{k \in I}{\cap}H_k(P_0) \subset
\underset{k \in J}{\cap}H_k(P_0)  \cap \underset{k \in I}{\cap}H_k(P_0)
= \underset{k \in I \cup J}{\cap}H_k(P_0) = \emptyset\,,$$
where the last equality follows from (\ref{e1}).
This proves (\ref{e2}) in the case $J \not= \emptyset$.
 By (\ref{e2}) and by \cite[Corollary 1]{E} of Eremenko (or by the more general 
 \cite[Theorem 7.3.4]{No-Wi} of Noguchi-Winkelmann), we get that $Z - (\underset{k \not\in J}{\cup}H_k(P_0))$ is complete hyperbolic and hyperbolically imbedded, and hence $g$ is constant. This is a contradiction. 
 
 Thus $\{f^{(p)}\}$ is a holomorphically normal family on $U_{z_1}$. 
By the usual diagonal argument, we can find a subsequence 
(again denoted by $\{f^{(p)}\}$) which converges uniformly on compact subsets of $(D - S) - E$ to a holomorphic mapping $f$ of $(D - S) - E$ into $\P^N\big(\C\big)$. 

By Lemma \ref{lemma 6n} a), $\{f^{(p)}\}$ has a meromorphically convergent subsequence (again denoted by $\{f^{(p)}\}$) on $D-S$ and again by Lemma \ref{lemma 6n} a), $\{f^{(p)}\}$ has a meromorphically convergent subsequence on 
$D$. Then $\mathcal F$ is a meromorphically normal family on $D$.
The proof of Theorem \ref{theorem 1}  is completed.

Under the additional conditions of Theorem \ref{theorem 2} by Lemma \ref{lemma 6n} b), $\{f^{(p)}\}$ has a subsequence which converges uniformly on compact subsets of $D$ to a holomorphic mapping of $D$ to $\P^N\big(\C\big).$ The proof of Theorem \ref{theorem 2}  is completed.
\end{proof}

\begin{proof}[{\bf Proof of Theorem \ref{theorem 4}}]
Suppose that $\mathcal F$ is not normal on $D$. Then, by Lemma \ref{lemma 9}, there exists a subsequence denoted by $\{f^{(p)}\}\subset \mathcal F$ and $z_0 \in D, \{z_p\}_{p=1}^\infty \subset D$ with $z_p\to z_0$, $\{r_p\} \subset (0, +\infty)$ with $r_p\to 0^+$ and $\{u_p\} \subset \C^n$, which are unit vectors, such that $g^{(p)}(\xi) := f^{(p)}\big(z_p + r_p u_p\xi\big)$ converges uniformly on compact subsets of $\C$ to a nonconstant holomorphic map $g$ of  $\C$ into 
$\P^N\big(\C\big).$

By condition i) of the theorem and by Lemma \ref{lemma 5nn} 
there exists a subsequence (which we denote again by $\{p\}\subset \N$) such that for 
$0 \leqslant i \leqslant N$, $P_i^{(p)}:= P_i(f^{(p)})$ converge uniformly on compact subsets of $D$ to
$P_i$,
and the $b_{ij}^{(p)}:=b_{ij}(f^{(p)})$ converge uniformly on compact subsets of $D$
to $b_{ij}$ for all $0 \leq i \leq N, \: 1\leq j\leq q$ and that
the $Q_j^{(p)}:=Q_j(f^{(p)}) =  \sum_{i=0}^N b_{ij}^{(p)}P_i^{(p)}$ converge, for all 
 $0 \leq i \leq N, \: 1\leq j\leq q$, uniformly on compact subsets of $D$ to
 $Q_j := \sum_{i=0}^N b_{ij}P_i \in  \widetilde{\mathcal{S}}\big(\{P_i\}_{i=0}^N\big) $, and that we have, 
 for any fixed $z=z_0 \in D$,
$$ D\big(Q_1, \cdots, Q_q\big)(z) > \delta(z) > 0$$
 (in particular
 the moving hypersurfaces
 $Q_1(z), \cdots, Q_{q}(z)$ are located in (pointwise) general position).
We finally recall that with writing both variables $z \in D$ and 
$\omega \in \P^N\big(\C\big)$, we thus have that 
$$P_i^{(p)}(z)(\omega) \rightarrow P_i(z)(\omega); \; 
Q_j^{(p)}(z)(\omega) \rightarrow Q_j(z)(\omega); \; 
b_{ij}^{(p)}(z) \rightarrow b_{ij}(z)$$
uniformly on compact subsets in the variable $z \in D$.

For any fixed $\xi_0 \in \C$, there exists a ball $B(\xi_0, r_0)$ in $\C$ and an index $i$ such that $g\left(B(\xi_0, r_0)\right) \subset \{\omega \in \P^N\big(\C\big): \omega_i \not = 0 \}$. Without loss of generality we may assume $i = 0.$ Therefore, there exist admissible representations
$$\tilde g^{(p)}(\xi)=(1,g^{(p)}_1(\xi),\cdots,g^{(p)}_N(\xi)) $$ 
$$\tilde g(\xi)=(1,g_1(\xi),\cdots,g_N(\xi)) $$ of $g^{(p)}$ and $g$ on $B(\xi_0, r_0).$

Because of the convergence of $\{g^{(p)} \}$ on $B(\xi_0, r_0)$, $\{g^{(p)}_i \}$ converges uniformly on compact subsets of $B(\xi_0, r_0)$ to $g_i$ for each $1\,\leq i\leq N.$ This implies that $Q_j^{(p)}\big(z_p+ r_pu_p\xi\big)\big(\widetilde{g}^{(p)}(\xi)\big)$ converges uniformly on compact subsets of $\C$ to $Q_j\big(z_0\big)\big(\widetilde{g}(\xi)\big)$ and $P_i^{(p)}\big(z_p + r_pu_p\xi\big)\big(\widetilde{g}^{(p)}(\xi)\big)$ converges uniformly on compact subsets of $\C$ to $P_i\big(z_0\big)\big(\widetilde{g}(\xi)\big).$

By Hurwitz's theorem, there exists a positive integer $N_0$ such that  
$Q_j^{(p)}\big(z_p+ r_pu_p\xi\big)\big(\widetilde{g}^{(p)}(\xi)\big)$  and $Q_j\big(z_0\big)\big(\widetilde{g}(\xi)\big)$ have the same number of zeros with counting multiplicities on $B(\xi_0, r_0)$ for each $p \geqslant N_0.$ Since the map $g^{(p)}$ of $B(\xi_0, r_0)$ into $\P^N\big(\C\big)$ intersects $Q_j^{(p)}$ with multiplicity at least $m_j$, it implies that any zero $\xi$  of $Q_j\big(z_0\big)\big(\widetilde{g}(\xi)\big)$ has multiplicity at least $m_j.$
Hence,  $g$ intersects $Q_j(z_0)$ with multiplicity at least $m_j$ for each $1\leq j \leq q.$ 

Since 
we have that $Q_1, \cdots, Q_q$ are in $\widetilde{\mathcal S}\big(\{P_i\}_{i = 0}^N\big)$ and 
$$D\big(Q_1, \cdots, Q_q\big)(z) > 0\mbox { for any } z \in D,$$
we have in particular that $Q_1(z_0), \cdots, Q_q(z_0)$ are in ${\mathcal S}\big(\{P_i(z_0)\}_{i = 0}^N\big)$ and 
$$D\big(Q_1, \cdots, Q_q\big)(z_0) > 0.$$
Thus, by Lemma \ref{lemma 13}, $g$ is a constant mapping of $\C$ into $\P^N\big(\C\big).$ This is a contradiction.
\end{proof}

\begin{proof}[{\bf Proofs of Theorem \ref{theorem 5} and Theorem \ref{theorem 6}}]\
Let $\{f^{(p)}\}$ be a se\-quence of meromorphic mappings in  $\mathcal F$.
We have to prove that after passing to a subsequence (which we denote
again by $\{f^{(p)}\}$), the sequence  $\{f^{(p)}\}$ converges meromorphically on $D$ to a meromorphic mapping $f$. Moreover, 
under the stronger conditions of Theorem  \ref{theorem 6}, we have to
show that  $\{f^{(p)}\}$ converges uniformly on compact subsets of $D$
to a holomorphic mapping $f$.

By condition i) of the theorems and by Lemma \ref{lemma 5nn} 
there exists a subsequence (which we denote again by  $\{f^{(p)}\}$) such that for 
$0 \leqslant i \leqslant N$, $P_i^{(p)}:= P_i(f^{(p)})$ are homogenous polynomials of the same degree $d$ and converge uniformly on compact subsets of $D$ to
$P_i$,
and the $b_{ij}^{(p)}:=b_{ij}(f^{(p)})$ converge uniformly on compact subsets of $D$
to $b_{ij}$ for all $0 \leq i \leq N, \: 1\leq j\leq q$ and that
the $Q_j^{(p)}:=Q_j(f^{(p)}) =  \sum_{i=0}^N b_{ij}^{(p)}P_i^{(p)}$ converge, for all 
 $0 \leq i \leq N, \: 1\leq j\leq q$, uniformly on compact subsets of $D$ to
 $Q_j := \sum_{i=0}^N b_{ij}P_i \in  \widetilde{\mathcal{S}}\big(\{P_i\}_{i=0}^N\big) $, and that we have
$$ D\big(Q_1, \cdots, Q_q\big)(z_0)  > 0.$$
In particular,  the moving hypersurfaces
 $Q_1(z_0), \cdots, Q_{q}(z_0)$ are located in general position,
 and the moving hypersurfaces $Q_1(z),...,Q_q(z)$ are located in (weakly) general position.

By condition ii) of Theorem \ref{theorem 5} and Lemmas \ref{lemma 2}, \ref{lemma 1}, 
and by condition iii) of the theorems and Lemmas \ref{lemma 2a}, \ref{lemma 1a}, 
after passing  to a subsequence, we may assume that the sequence $\{f^{(p)}\}$ satisfies
\begin{equation*}
\lim_{p \rightarrow \infty} (f^{(p)})^{-1}\big(H_k^{(p)}\big) = S_k \; (1 \leqslant k \leqslant N+1)
\end{equation*}
as a sequence of closed subsets of $D$, where $S_k$ are either empty or pure $(n-1)$-dimensional analytic sets in $D$, and
\begin{equation*}
\lim_{p \rightarrow \infty} 
\overline{\left\{z \in \supp \nu\big(f^{(p)}, H_k^{(p)}\big)\big|\nu\big(f^{(p)}, H_k^{(p)})\big)(z) < m_k \right\}}
- S = S_k \; (N+2 \leqslant k \leqslant q)
\end{equation*}
as a sequence of closed subsets of $D - S$, where $S_k$ are either empty or pure $(n-1)$-dimensional analytic sets in $D - S$. 

Let $T = (...,t_{kI},...) \  (1\leqslant k \leqslant q, |I| =d)$ be a family of variables. 
Set $\widetilde{Q}_k = \underset{|I|=d}{\sum} t_{kI}\omega^{I} \in \Z [T,\omega] \ (1\leq k\leq q).$
For each subset $L \subset \left\{ 1,...,q\right\}$ with $|L| = n + 1$, take $\widetilde{R}_L $ is the resultant of the $\widetilde{Q}_k \ (k \in L). $ Since 
$\left\{ Q_k\right\} _{k \in L}$ are in (weakly) general position, $\widetilde{R}_L(...,a_{kI},...) \not\equiv  0$ (where we put $a_{kI}=0$ for  $|I| \not= d$). We set 
\begin{align*}
\widetilde{S} :=\bigg\{ z \in D |\ \widetilde{R}_L(\cdots,a_{kI},\cdots) = 0& \text{ for some } L \subset \{ 1,\cdots,q\}\\
 & \text{ with } |L| = n + 1\bigg\}.
\end{align*} 
Let $E = (\overset{q}{\underset{k=1}{\cup}} S_k \cup \widetilde{S}) - S$. Then $E$ is either empty or a pure $(n-1)$-dimensional analytic set in $D -S.$

Fix any point $z_1$ in $(D - S) - E.$ Choose a relatively compact neighborhood $U_{z_1}$ of $z_1$ in $(D - S) - E$. Then $\{f^{(p)} \big|_{U_{z_1}}\} \subset Hol\big(U_{z_1}, \P^N\big(\C\big)\big)$. We now prove that the family $\{f^{(p)} \big|_{U_{z_1}}\}$ is a holomorphically normal family. For this it is sufficient to observe that the family $\{f^{(p)} \big|_{U_{z_1}}\}$ now satisfies all conditions of Theorem \ref{theorem 4}:
In fact there exists $N_0$ such that for $p \geq N_0$, 
$\{f^{(p)} \big|_{U_{z_1}}\}$ does not intersect $H_k^{(p)}$ for $1 \leq k\leq N+1$, and $\{f^{(p)} \big|_{U_{z_1}}\}$ intersects $H_k^{(p)}$ of order at least $m_k$ for $N+2 \leq k\leq q$, and for all $z \in U_{z_1}$, we have 
$ D\big(Q_1, \cdots, Q_q\big)(z)  > 0$. So if we still put $m_k= \infty$ for 
$1\leq k \leq N+1$, the conditions of Theorem \ref{theorem 4} are satisfied,
and so the family $\{f^{(p)} \big|_{U_{z_1}}\}$ is a holomorphically normal family.
By the usual diagonal argument, we can find a subsequence 
(again denoted by $\{f^{(p)}\}$) which converges uniformly on compact subsets of $(D - S) - E$ to a holomorphic mapping $f$ of $(D - S) - E$ into $\P^N\big(\C\big)$. 

By Lemma \ref{lemma 6n} a), $\{f^{(p)}\}$ has a meromorphically convergent subsequence (again denoted by $\{f^{(p)}\}$) on $D-S$ and again by Lemma \ref{lemma 6n} a), $\{f^{(p)}\}$ has a meromorphically convergent subsequence on 
$D$. Then $\mathcal F$ is a meromorphically normal family on $D$.
The proof of Theorem \ref{theorem 5}  is completed.

Under the additional conditions of Theorem \ref{theorem 6} by Lemma \ref{lemma 6n} b), $\{f^{(p)}\}$ has a subsequence which converges uniformly on compact subsets of $D$ to a holomorphic mapping of $D$ to $\P^N\big(\C\big).$ The proof of Theorem \ref{theorem 6}  is completed.
\end{proof}

{\bf Acknowledgements.} This work was done during a stay of the authors at 
the Vietnam Institute for Advanced Study in Mathematics (VIASM).
We would like to thank the staff there, in particular the partially support of VIASM.

\vspace{1cm}

{\it Gerd Dethloff$^{1,2}$\\
$^1$ Universit\'e Europ\'eenne de Bretagne, France\\
$^2$ Universit\'e de Brest \\
Laboratoire de Math\'{e}matiques Bretagne Atlantique - UMR CNRS 6205\\ 
6, avenue Le Gorgeu, C.S. 93837\\
29238 Brest Cedex 3, France}\\
email: gerd.dethloff@univ-brest.fr

\vskip0.3cm

{\it Do Duc Thai and Pham Nguyen Thu Trang\\
Department of Mathematics\\
Hanoi National University of Education\\
136 XuanThuy str., Hanoi, Vietnam}\\
emails: ducthai.do@gmail.com; \  pnttrangsp@yahoo.com

\end{document}